\newcommand{\rright}{\right}
\newcommand{\lleft}{\left}
\newcommand{\rrVert}{\Vert}
\newcommand{\rrvert}{\vert}
\newcommand{\llVert}{\Vert}
\newcommand{\llvert}{\vert}
\newtheorem{theorem}{Theorem}[section]
\newtheorem{lemma}[theorem]{Lemma}
\newcommand{\us}{\underline{s}}
\newcommand{\E}{\mathbb{E}}
\newcommand{\var}{\operatorname{\mathbb{V}ar}}
\newcommand{\cov}{\operatorname{\mathbb{C}ov}}
\renewcommand{\P}{\mathbb{P}}
\newcommand{\ep}{\varepsilon}
\newcommand{\De}{\Delta}
\newcommand{\la}{\lambda}
\renewcommand{\th}{\theta}
\newcommand{\ga}{\gamma}
\newcommand{\pmu}{\boldsymbol{\mu}}
\newcommand{\Si}{\boldsymbol{\Sigma}}
\newcommand{\bbC}{\mathbb{C}}
\newcommand{\bbR}{\mathbb{R}}
\newcommand{\cC}{\mathcal{C}}
\newcommand{\cD}{\mathcal{D}}
\newcommand{\cK}{\mathcal{K}}
\newcommand{\cQ}{\mathcal{Q}}
\newcommand{\frm}{\mathfrak{m}}
\newcommand{\frt}{\mathfrak{t}}
\newcommand{\cR}{\mathcal{R}}
\newcommand{\cT}{\mathcal{T}}
\newcommand{\vp}{\varpi}
\newcommand{\A}{\mathbf{A}}
\newcommand{\C}{\mathbf{C}}
\newcommand{\B}{\mathbf{B}}
\newcommand{\D}{\mathbf{D}}
\newcommand{\F}{\mathbf{F}}
\newcommand{\G}{\mathbf{G}}
\renewcommand{\H}{\mathbf{H}}
\newcommand{\I}{\mathbf{I}}
\newcommand{\M}{\mathbf{M}}
\newcommand{\N}{\mathbf{N}}
\renewcommand{\r}{\mathbf{r}}
\renewcommand{\S}{\mathbf{S}}
\newcommand{\T}{\mathbf{T}}
\newcommand{\X}{\mathbf{X}}
\newcommand{\Z}{\mathbf{Z}}
\newcommand{\bz}{\mathbf{z}}
\DeclareMathAlphabet{\mathbbl}{U}{mt2hrb}{m}{n}
\SetMathAlphabet{\mathbbl}{bold}{U}{mt2hrb}{b}{n}
\newcommand{\eqref}[1]{(\ref{#1})}
\newcommand{\tr}{\operatorname{tr}}
\def\sfrac#1#2{#1/#2}
\def\afrac#1#2{#1/(#2)}
\def\sklfrac#1#2{(#1/#2)}
\def\sklvfrac#1#2{((#1)/#2)}
\def\sklafrac#1#2{(#1/(#2))}
\begin{document}
\begin{frontmatter}

\title{Convergence of the empirical spectral distribution function of
Beta matrices}
\runtitle{Convergence of ESDF of Beta matrices}

\begin{aug}
\author[1]{\inits{Z.}\fnms{Zhidong} \snm{Bai}\thanksref{1,e1}\ead[label=e1,mark]{baizd@nenu.edu.cn}},
\author[1]{\inits{J.}\fnms{Jiang} \snm{Hu}\corref{}\thanksref{1,e2}\ead[label=e2,mark]{huj156@nenu.edu.cn}},
\author[2]{\inits{G.}\fnms{Guangming} \snm{Pan}\thanksref{2}\ead[label=e3]{gmpan@ntu.edu.sg}}
\and
\author[3]{\inits{W.}\fnms{Wang} \snm{Zhou}\thanksref{3}\ead[label=e4]{stazw@nus.edu.sg}}
\address[1]{KLASMOE and School of Mathematics $\&$ Statistics,
Northeast Normal University, Changchun, P.R.C., 130024.
\printead{e1,e2}}

\address[2]{Division of Mathematical Sciences, School of Physical and
Mathematical Sciences, Nanyang Technological University, Singapore 637371.
\printead{e3}}

\address[3]{Department of Statistics and Applied Probability, National
University of
Singapore, Singapore 117546.
\printead{e4}}
\end{aug}

\received{\smonth{8} \syear{2012}}
\revised{\smonth{11} \syear{2013}}

%
\begin{abstract}
Let $\mathbf{B}_n=\S_n (\S_n+\alpha_n\T_N )^{-1}$,
where $\S_n$ and $\T_N$ are two independent sample covariance
matrices with dimension $p$ and sample sizes $n$ and $N$, respectively.
This is the so-called Beta matrix. In this paper,
we focus on the limiting spectral distribution function and the central
limit theorem of linear
spectral statistics of $\B_n$. Especially, we do not require $\S_n$
or $\T_N$ to be invertible. Namely, we can
deal with the case where $p > \max\{n,N\}$ and $p<n+N$.
Therefore, our results cover many important applications which cannot
be simply deduced from the corresponding results for multivariate $F$ matrices.
\end{abstract}

%
\begin{keyword}
\kwd{Beta matrices}
\kwd{CLT}
\kwd{LSD}
\kwd{multivariate statistical analysis}
\end{keyword}

\end{frontmatter}

\section{Introduction}

In the last two decades, more and more large dimensional data sets
appear in scientific research. When the dimension of data or number of
parameters becomes large, the classical methods could reduce
statistical efficiency significantly. In order to analyze those large
data sets,
many new statistical techniques, such as large dimensional multivariate
statistical analysis (MSA) based on the random matrix theory (RMT),
have been developed. In this paper, we will investigate a widely used
type of random matrices in MSA which are called Beta matrices.

Firstly we introduce some definitions and terminology associated with
Beta matrices. Let $\mathbf{X}_n=(x_{ij})_{p\times n}$, where $\{
x_{ij}\}$ are independent and identically distributed (i.i.d.) random
variables with mean zero and variance one, and
similarly let $\T_N=N^{-1}\mathbb{X}_N\mathbb{X}_N^*$ be another
sample covariance matrix independent of $\S_n$, where $\mathbb
{X}_N=(\mathbbl{x}_{ij})_{p\times N}$ and $\{\mathbbl{x}_{ij}\}$ are
i.i.d. random variables with mean zero and variance one.
The Beta matrix is defined as
%
\begin{eqnarray}\label{bema}
\mathbf{B}_n=\S_n (\S_n+\alpha_n
\T_N )^{-1},
\end{eqnarray}
where $\alpha_n$ is a positive constant.
For any $n\times n$ matrix $\mathbf{A}$ with only real eigenvalues, we
denote $F^{\mathbf{A}}$ as the empirical spectral distribution
function (ESDF) of $\mathbf{A}$, that is
$F^\mathbf{A}(x)=\frac{1}{n}\sum_{i=1}^nI(\lambda_i^{\mathbf
{A}}\leq x)$, where $\lambda_i^{\mathbf{A}}$ denotes the $i$th
smallest eigenvalue of $\mathbf{A}$ and $I(\cdot)$ is the indicator
function. In addition, we shall call
$
\int f(x)\,\mathrm{d}F^\A(x)=\frac{1}{n}\sum_{k=1}^nf(\la^\A_k)
$
a linear
spectral statistics (LSS) of matrix $\A$.
In this paper, we focus on the limiting ESDF and the central limit
theorem (CLT) of LSS of $\B_n$.

One motivation to study Beta matrices is that their ESDFs are very
useful in MSA, such as in the test of equality of $k\ (k \geq2)$
covariance matrices,
multivariate analysis of variance, the independence test of
sets of variables, canonical correlation analysis and so on. There is a
huge literature regarding this kind of matrices. One may refer to \cite
{Anderson03I,FujikoshiU11M,Muirhead82A} for more details. For
pedagogical reasons, we provide one statistical application of Beta
matrices as follows.

Let $\{\bz_{1}^{(1)},\dots,\bz_{n}^{(1)}\}$ be an i.i.d. sample
drawn from a $p$-dimensional distribution and $\{\bz_{1}^{(2)},\dots
,\allowbreak \bz_{N}^{(2)}\}$ be an i.i.d. sample drawn from another
$p$-dimensional distribution. Suppose $\pmu_{i}=\E\bz
_{1}^{(i)}=\mathbf{0}$ and $\Si_{i}=\var\bz_{1}^{(i)}$, $i=1,2$. Write
$\bz_j^{(1)}=\Si_1^{1/2}\X_{(\cdot,j)}$ and $\bz_j^{(2)}=\Si
_1^{1/2}\mathbb{X}_{(\cdot,j)}$ where $\X_{(\cdot,j)}\ ( \mathbb
{X}_{(\cdot,j)} )$ is the $j$th column of $\X_n\ (\mathbb{X}_N)$
and $\Si_i^{1/2}$ is any square root of $\Si_i$. We wish to test
\begin{eqnarray*}
H_0\dvtx \Si_1=\Si_2 \quad \mbox{v.s.}\quad
H_1\dvtx \Si_1\neq\Si_2.
\end{eqnarray*}
This is one of the most elementary problems in MSA, for which there are
lots of test statistics. If we write $\Z_n^{(1)}=n^{-1}\sum_{i=1}^n\bz_{i}^{(1)}(\bz_{i}^{(1)})^*$ and $\Z_N^{(2)}=N^{-1}\sum_{j=1}^n\bz_{j}^{(2)}(\bz_{j}^{(2)})^*$, then all the following
$L_j$, $j=1,2,\ldots,5$ are the most frequently used test statistics
for $H_0$ (see Chapter~8 in \cite{Muirhead82A}).
%
\begin{eqnarray}
\label{lsss} L_1&=&\log\frac{|\Z_n^{(1)}|^{n}\cdot|\Z_N^{(2)}|^{N}}{|c_n\Z
_n^{(1)}+c_N\Z_N^{(2)}|^{n+N}}=\int \bigl(n
\log({x}/{c_n})- N\log \bigl((1-x)/c_N\bigr) \bigr) \,
\mathrm{d}F^{\B_n}(x),\nonumber
\\
L_2&=&\tr\bigl(\Z_N^{(2)}\bigl(
\Z_n^{(1)}\bigr)^{-1}\bigr)=p\int\frac{1-x}{\alpha_nx}
\,\mathrm{d}F^{\B_n}(x),\nonumber
\\
L_3&=&\log\bigl\llvert \Z_n^{(1)}\bigl(
\Z_n^{(1)}+\alpha_n\Z _N^{(2)}
\bigr)^{-1}\bigr\rrvert =p\int\log x \,\mathrm{d}F^{\B_n}(x),
\nonumber\\[-8pt]\\[-8pt]
L_4&=&\tr\bigl(\Z_n^{(1)}\bigl(
\Z_n^{(1)}+\alpha_n\Z_N^{(2)}
\bigr)^{-1}\bigr)=p\int x \,\mathrm{d}F^{\B_n}(x),
\nonumber\\
L_5&=&c_n\tr\bigl(\Z_n^{(1)}
\bigl(c_n\Z_n^{(1)}+c_N
\Z_N^{(2)}\bigr)^{-1}-\I \bigr)^2+c_N\tr
\bigl(\Z_N^{(2)}\bigl(c_n\Z_n^{(1)}+c_N
\Z_N^{(2)}\bigr)^{-1}-\I\bigr)^2\nonumber
\\
&=&c_np\int\bigl(c_n^{-1}x-1
\bigr)^2 \,\mathrm{d}F^{\B_n}(x)+c_{N}p\int
\bigl(c_N^{-1}(1-x)-1\bigr)^2 \,
\mathrm{d}F^{\B_n}(x),\nonumber %
\end{eqnarray}
where
$c_n=n/(n+N)$, $c_N=N/(n+N)$ and $\alpha_n=N/n$.
Apparently all the above test statistics are linear functionals of the
ESDF of Beta
matrices $\B_n$, which are all the LSS of $\B_n$.
It is already well known that the classical limit theorems for those
LSS are not valid when the dimension is large.
So it is crucial to investigate the sequence $\{F^{\B_n}\}$ in the
large dimensional case. The following result tells us the limiting
behavior of $\{F^{\B_n}\}$ as $p,n,N\to\infty$.
%
\begin{theorem}[(Limiting spectral distribution function (LSDF))]\label{lsd}
Assume on a common probability space:
\begin{enumerate}[(iii)]
\item[(i)] For each $i,j,n$, $x_{ij}=x_{nij}$ are i.i.d.
with $\E x_{11}=0$, $\E| x_{11}|^2=1$.
%
\item[(ii)]$\alpha_n\to\alpha>0$ and $y_n=p/n\to y >0$.
\item[(iii)] For each $k,l,N$, $\mathbbl{x}_{kl}=\mathbbl
{x}_{Nkl}$ are i.i.d. with $\E\mathbbl{x}_{11}=0$, $\E| \mathbbl
{x}_{11}|^2=1$.
%
\item[(iv)] $Y_N=p/N\to Y>0$ and $\frac{p}{n+N}\to
\frac{yY}{y+Y}\in(0,1)$.
\item[(v)] $\sup_n\E|x_{11}|^4<\infty$ and $\sup_N\E
|\mathbbl x_{11}|^4<\infty$.
\end{enumerate}
Then with probability $1$, $F^{\B_n}{\rightarrow} F$ weakly, where $F$
is a non-random distribution function whose density function is
\begin{eqnarray*}
\lleft\{ %
\begin{array} {l@{\qquad}l} \displaystyle \frac{\sqrt{((\alpha(1-Y)-1+y)^2+4\alpha)(t_r-t)(t-t_l)}}{2 \uppi
t(1-t)(y(1-t)+\alpha tY)}, & \mbox{when
$t_l<t<t_r$;}
\\
0, & \mbox{otherwise,} \end{array} %
\rright.
\end{eqnarray*}
where $t_l,t_r= (\frac{2\alpha-(1-y)[\alpha(1-Y)-1+y]\mp
2\alpha\sqrt{y-yY+Y}}{(\alpha(1-Y)-1+y)^2+4\alpha} )$. In addition,
when $y>1$, $F(t)$ has a point mass $1-1/y$ at $t=0$; when $Y>1$,
$F(t)$ has a point mass $1-1/Y$ at $t=1$.
\end{theorem}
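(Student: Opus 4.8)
The plan is to pass to the Stieltjes transform and run the Marchenko--Pastur/Silverstein programme, arranged so that neither $\S_n$ nor $\T_N$ is ever inverted. Using (v) I would first truncate the entries of $\X_n$ and $\mathds{X}_N$ at level $\eta_n\sqrt n$ ($\eta_n\downarrow0$ slowly) and recentre them; a low-rank change of $\X_n$ or $\mathds{X}_N$ alters $F^{\B_n}$ by $O(\mathrm{rank}/p)$ uniformly, and the recentred matrix is again a Beta matrix with parameter $\a_n'\to\a$, so the LSDF is unchanged a.s.\ and I may assume the entries bounded, mean $0$, variance $1$, with all moments. It then suffices to prove that for each fixed $z\in\mathbb{C}^{+}$ the Stieltjes transform $s_n(z):=\int(x-z)^{-1}dF^{\B_n}(x)$ converges a.s.\ to a deterministic $s(z)$, from which $F$ follows by Stieltjes--Perron inversion.

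\textbf{An exact identity.} By (iv), $p/(n+N)\to yY/(y+Y)<1$, so $\A_n:=\S_n+\a_n\T_N$ has full rank $p$ for all large $n$, a.s.; thus $\B_n$ is similar to the Hermitian matrix $\widetilde\B_n:=\A_n^{-1/2}\S_n\A_n^{-1/2}$, whose spectrum lies in $[0,1]$, so the eigenvalues of $\B_n$ are real and $s_n(z)=\tfrac1p\,\mathrm{tr}(\widetilde\B_n-z\I)^{-1}$. Writing $\widetilde\B_n-z\I=\A_n^{-1/2}\big((1-z)\S_n-z\a_n\T_N\big)\A_n^{-1/2}$ and $\S_n+\a_n\T_N=(\S_n-w\T_N)+\tfrac{\a_n}{1-z}\T_N$ with $w:=\tfrac{\a_n z}{1-z}$, I obtain
\begin{align*}
 s_n(z)=\frac{1}{1-z}+\frac{\a_n}{(1-z)^2}\,g_n(w),\qquad g_n(w):=\frac1p\,\mathrm{tr}\big[\T_N(\S_n-w\T_N)^{-1}\big].
\end{align*}
Since $z\in\mathbb{C}^{+}$ gives $\mathrm{Im}\,w>0$ and $\ker\S_n\cap\ker\T_N=\{0\}$ for large $n$ a.s., $\S_n-w\T_N$ is invertible, and the problem reduces to the limit of $g_n(w)$, in which neither $\S_n$ nor $\T_N$ is inverted. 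I would also note the symmetry $\I-\B_n=\a_n\T_N(\S_n+\a_n\T_N)^{-1}$, a Beta matrix with $\S_n$ and $\a_n\T_N$ exchanged, i.e.\ $t\leftrightarrow 1-t$, $y\leftrightarrow Y$, $\a\leftrightarrow\a^{-1}$ in the limit, under which the claimed density is invariant; this halves the work on the two point masses.

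\textbf{The deterministic equivalent.} Conditioning on $\mathds{X}_N$ makes $\T_N$ a fixed nonnegative definite matrix whose ESDF converges a.s.\ to the Marchenko--Pastur law $H_Y$ of index $Y$. Treating $-w\T_N$ as a fixed shift and $\S_n=\tfrac1n\sum_j\x_j\x_j^{*}$ as a sum of rank-one terms, I would run the leave-one-out / Sherman--Morrison analysis: form the resolvents $\mathbf R_j$ with the $j$-th column of $\X_n$ deleted, expand $g_n(w)$, replace $\tfrac1n\x_j^{*}\mathbf R_j\x_j$ by $\tfrac1n\mathrm{tr}\,\mathbf R_j$ up to martingale-difference errors, and pass to the limit. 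This gives a deterministic $g(w)$ and, via the identity above together with the explicit $H_Y$, a closed system of two algebraic equations for $s(z)=\lim s_n(z)$; convergence in probability is promoted to a.s.\ convergence by the usual Borel--Cantelli estimate from a fourth-moment (or, after truncation, exponential) concentration bound on $s_n(z)$.

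\textbf{Solving, the atoms, and the main difficulty.} Eliminating the auxiliary unknown reduces the system to a quadratic in $s(z)$ with coefficients rational in $z$; solving and applying $f(t)=\lim_{\ep\downarrow0}\tfrac1\pi\mathrm{Im}\,s(t+i\ep)$ yields the stated density --- the radicand $\big((\a(1-Y)-1+y)^2+4\a\big)(t_r-t)(t-t_l)$ and the factor $2\pi t(1-t)(y(1-t)+\a tY)$ come from the discriminant and denominator of this quadratic, and $t_l,t_r$ are its discriminant's real roots (real and in $[0,1]$ precisely because $y+Y-yY>0$, which is (iv)). The atoms are read off from $\widetilde\B_n$ directly: $\ker\widetilde\B_n=\A_n^{1/2}\ker\S_n$ has dimension $\dim\ker\S_n=\max(0,p-n)$, giving mass $1-1/y$ at $t=0$ when $y>1$; and $\widetilde\B_n x=x\iff\T_N\A_n^{-1/2}x=0$ gives eigenvalue $1$ with multiplicity $\dim\ker\T_N=\max(0,p-N)$, hence mass $1-1/Y$ at $t=1$ when $Y>1$; one checks the bulk density integrates to the remaining mass. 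The crux is the previous step: making the deterministic equivalent rigorous needs a uniform-in-$n$ bound on $\|(\S_n-w\T_N)^{-1}\|$, equivalently a lower bound for $\la_{\min}(\A_n)$ away from $0$ --- i.e.\ no eigenvalues of $\B_n$ escape toward $0$ or $1$ beyond the prescribed atoms --- which requires a Bai--Yin-type estimate for the combined $p\times(n+N)$ matrix $[\,n^{-1/2}\X_n,\,(\a_n/N)^{1/2}\mathds{X}_N\,]$ whose columns carry two different scales; this, and the bookkeeping with the two aspect ratios, is where the difficulty lies, while the algebra of the last step is only lengthy.
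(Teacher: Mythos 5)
Your proposal is correct and reaches the same equation and density, but it travels a genuinely different road than the paper, and the differences are worth spelling out.

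The paper first proves a more general fact (Theorem \ref{lsdt}) for $\underline\B_n=\S_n(\S_n+\a_n\A_p)^{-1}$ with a deterministic Hermitian sequence $\A_p$, and handles the possible singularity of $\a_n\A_p$ by $\varepsilon$-regularisation: it sets $\underline\B_\varepsilon=\S_n(\S_n+\a_n\A_p+\varepsilon\I)^{-1}=\I-(\widehat{\underline\B}_\varepsilon+\I)^{-1}$ with $\widehat{\underline\B}_\varepsilon=\S_n(\a_n\A_p+\varepsilon\I)^{-1}$, invokes Silverstein's theorem for $\S_n$ times a bounded deterministic matrix, and lets $\varepsilon\to0$. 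You instead use the algebraic identity
\[
 s_n(z)=\frac{1}{1-z}+\frac{\a_n}{(1-z)^2}\,g_n(w),\qquad g_n(w)=\frac1p\,\mathrm{tr}\bigl[\T_N(\S_n-w\T_N)^{-1}\bigr],\ \ w=\frac{\a_n z}{1-z},
\]
obtained from $\widetilde\B_n-z\I=\A_n^{-1/2}\bigl((1-z)\S_n-z\a_n\T_N\bigr)\A_n^{-1/2}$. This never inverts $\S_n$ or $\T_N$, so the $\varepsilon$-regularisation disappears; the price is that you must re-derive a Silverstein-type fixed-point equation for $g_n(w)$ from scratch via leave-one-out, rather than cite an off-the-shelf result. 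Your atom computation is also different and more conceptual: the paper reads $F\{0\}=-\lim_{z\to 0}zs(z)$ and $F\{1\}=-\lim_{z\to 1}(z-1)s(z)$ from the closed-form Stieltjes transform, whereas you compute $\dim\ker\widetilde\B_n=\dim\ker\S_n$ and $\dim\ker(\widetilde\B_n-\I)=\dim\ker\T_N$ directly, obtaining the masses $\max(0,1-1/y)$ and $\max(0,1-1/Y)$ without any limits. The observation that $\I-\B_n=\T_N(\T_N+\a_n^{-1}\S_n)^{-1}$ is itself a Beta matrix with $(y,Y,\a)\leftrightarrow(Y,y,\a^{-1})$, under which the stated density is invariant, is a nice structural remark the paper does not use. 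You also correctly identify the key technical point — a uniform lower bound on $\la_{\min}(\S_n+\a_n\T_N)$, i.e. a Bai--Yin estimate for the combined $p\times(n+N)$ matrix $[\,n^{-1/2}\X_n,\,(\a_n/N)^{1/2}\mathds{X}_N\,]$ — which is exactly what the paper delegates to Remark \ref{remark1.2} (citing \cite{BaiS10S}); note your bound $\|(\S_n-w\T_N)^{-1}\|\le|1-z|\,\la_{\min}(\A_n)^{-1}(\Im z)^{-1}$ follows cleanly from the same identity used to derive $g_n(w)$. Both routes are sound; yours is more self-contained and avoids the $\varepsilon\to0$ limit, while the paper's produces the reusable intermediate Theorem \ref{lsdt}.
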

%
\begin{remark}\label{remark1.2}
Condition $yY/(y+Y)<1$ is to guarantee that the random matrix $\S
_n+\alpha_n\T_N$ is invertible almost surely because $yY/(y+Y)>1$
ensures that the dimension $p$ could be eventually larger than the
number of observations $n+N$. This would imply that $\S_n+\alpha_n\T
_N$ is singular.
Condition (v) gives us the a.s. bounds of the limit of
the smallest and largest eigenvalues, $\la_1^{\S_n+\alpha_n\T_N}$
and $\la_p^{\S_n+\alpha_n\T_N}$ respectively, of the random matrix
$\S_n+\alpha_n\T_N$ since by the definition of $\B_n$ we can rewrite
\begin{eqnarray*}
&&\S_n+\alpha_n\T_N\\
&&\quad =\frac{1}{n} \biggl(
\X_{i}\X_{i}^*+\frac{\alpha
_n n}{N}\mathbb{X}_N
\mathbb{X}_N^* \biggr)
\\
&&\quad =\frac{1}{n+N}\lleft( %
\begin{array} {c@{\quad}c@{\quad}c@{\quad}c@{\quad}c@{\quad}c} x_{11} &
\cdots& x_{1n} & \mathbbl{x}_{11} & \cdots& \mathbbl
{x}_{1N}
\\
\vdots& \vdots& \vdots& \vdots& \vdots& \vdots
\\
x_{p1} & \cdots& x_{pn} & \mathbbl{x}_{p1} &
\cdots& \mathbbl {x}_{pN}
\\
\end{array} %
\rright)\boldsymbol{\Gamma}\lleft( %
\begin{array}
{c@{\quad}c@{\quad}c@{\quad}c@{\quad}c@{\quad}c} x_{11} & \cdots& x_{1n} & \mathbbl{x}_{11}
& \cdots& \mathbbl {x}_{1N}
\\
\vdots& \vdots& \vdots& \vdots& \vdots& \vdots
\\
x_{p1} & \cdots& x_{pn} & \mathbbl{x}_{p1} &
\cdots& \mathbbl {x}_{pN}
\\
\end{array} %
\rright)^*.
\end{eqnarray*}
Here
\begin{eqnarray*}
\boldsymbol{\Gamma}=\lleft( %
\begin{array} {c@{\quad}c} \left( %
\begin{array} {c@{\quad}c@{\quad}c} \displaystyle \frac{n+N}{n}&&
\\
&\ddots&
\\
&&\displaystyle \frac{n+N}{n}
\\
\end{array} %
\right)_{n\times n}&
\\
& \left( %
\begin{array} {c@{\quad}c@{\quad}c} \displaystyle \frac{(n+N)\alpha_n}{N}&&
\\
&\ddots&
\\
&&\displaystyle \frac{(n+N)\alpha_n}{N}
\\
\end{array} %
\right)_{N\times N} \end{array} %
\rright)_{(n+N)\times(n+N) }
\end{eqnarray*}
is a diagonal matrix. Thus under (v), for any $\ep>0$
and any $l>0$, there exist two positive constants $\nu_{1}=\min\{
1,\alpha Y/y\}\cdot(1+y/Y) (1-\sqrt{\frac{yY}{y+Y}} )^2$
and $\nu_{2}=\max\{1,\alpha Y/y\}\cdot(1+y/Y) (1+\sqrt{\frac
{yY}{y+Y}} )^2$ such that almost surely
%
\begin{eqnarray}
\label{rem1.2} \lim_{p,n,N\to\infty} \la_1^{\S_n+\alpha_n\T_N}\geq
\nu_1, \qquad \lim_{p,n,N\to\infty} \la_p^{\S_n+\alpha_n\T_N}
\leq\nu_2
\end{eqnarray}
and
%
\begin{eqnarray}
\label{rem1.3} \P \bigl(\la_1^{\S_n+\alpha_n\T_N}<\nu_1-\ep
\bigr)=\mathrm{o}\bigl(n^{-l}\bigr),\qquad  \P \bigl(\la_p^{\S_n+\alpha_n\T_N}>
\nu_2+\ep \bigr)=\mathrm{o}\bigl(n^{-l}\bigr).
\end{eqnarray}
One may refer to \cite{BaiS10S} for the proof of \eqref{rem1.2} and
\eqref{rem1.3}.
\end{remark}
%
\begin{remark}
Under the assumptions (i) and (ii) in
Theorem~\ref{lsd}, it is proved that
the ESDF of the sequence
$\{\S_n\}$ has a non-random limit which is known as the
Marchenko--Pastur (M--P) distribution \cite{MarvcenkoP67D,BaiS10S}. Yin
\cite{Yin86L} and Silverstein \cite{Silverstein95S} investigated the
LSDF of the sequence
$\{\S_n\T_N\}$ assuming (i)--(iii) of
Theorem~\ref{lsd}.
If $\T_N$ is invertible, Bai \textit{et al.} \cite{BaiY87L} gave the LSDF of
the sequence
$\{\S_n\T_N^{-1}\}$.
\end{remark}
%
\begin{remark}
If $\max\{y,Y\}<1$, by (v) we know that at least one of
the matrices $\S_n$ and $\T_N$ is invertible a.s. Without loss of
generality, we assume $Y<1$. So $\T_N$ is invertible a.s. Then we have
%
\begin{eqnarray}\label{betanfn}
\B_n=\S_n\T_N^{-1} \bigl(
\S_n\T_N^{-1}+\alpha_n\I
\bigr)^{-1},
\end{eqnarray}
which is a function of $\S_n\T_N^{-1}$. Via $\tilde t=\alpha_n
t/(1-t)$ we can recover Theorem~5.3 in \cite{BaiY87L} from our Theorem~\ref{lsd} directly.
Thus our Theorem~\ref{lsd} includes Theorem~5.3 in \cite{BaiY87L} as
a special case.
\end{remark}
%
\begin{remark}
From the density function in Theorem~\ref{lsd}, we can find that the
condition $\frac{p}{n+N}\to\frac{yY}{y+Y}\in(0,1)$ is necessary,
which is to make sense of $\sqrt{y+Y-yY}$.
\end{remark}

For the purpose of multivariate inference, it is of interest to know
the limiting distribution of these LSS \eqref{lsss}.
Thus, we will give the central limit theorems (CLT) of LSS of Beta
matrices. In order to present this result, we need more notation.
Denote
\begin{eqnarray*}
\mathfrak{B}_n(x)=p\bigl(F^{\B_n}(x)-F_0(x)
\bigr),
\end{eqnarray*}
where $F_0$ is the limit distribution of $F^{\B_n}$ with $\alpha,y,Y$
replaced by $\alpha_n,y_n,Y_N$, respectively.
For any function of bounded variation $G$ on the real line, its
Stieltjes transform is defined by
\[
s_G(z)=\int\frac{1}{\lambda-z}\,\mathrm{d}G(\lambda),\qquad  z\in\mathbb
{C}^{+}\equiv\{z\in\mathbb{C}\dvtx \Im z>0\}.
\]
Then we have the following theorem.
%
\begin{theorem}\label{clt}
In addition to the conditions \textup{(i)}--\textup{(iv)} in
Theorem~\ref{lsd}, we further assume
that:
\begin{enumerate}[2.]

\item[1.]$\E x_{11}^2=\E\mathbbl x_{11}^2=\frt$, $\E
|x_{11}|^4=\mathfrak{m}_x$, $\E|\mathbbl x_{11}|^4=\mathfrak
{m}_\mathbbl{x}$ and $\max_{p,n,N}\{\mathfrak{m}_x,\mathfrak
{m}_\mathbbl{x}\}<\infty$, where $\frt=0$, when both $\X_n$ and
$\mathbb{X}_N$ are complex valued, and $\frt=1$ if both real.
\item[2.] Let $f_1,\dots,f_k$ be functions analytic on an open region
containing the interval $[c_l,c_r]$ where $c_l=\nu_2^{-1}(1-\sqrt {y})^2$, $c_r=1-\alpha\nu_2^{-1}(1-\sqrt{Y})^2$, and $\nu_2$ is
defined in Remark~\ref{remark1.2}.
\end{enumerate}
Then, as $\min(n,N,p)\to\infty$, the random vector
\begin{eqnarray*}
\biggl(\int f_i\,\mathrm{d}\mathfrak{B}_n(x) \biggr),\qquad i=1,\dots,k,
\end{eqnarray*}
converges weakly to a Gaussian vector $(G_{f_1},\dots,G_{f_k})$ with
mean functions
\begin{eqnarray*}
\E G_{f_i}&=&\frac{\frt}{4\uppi i}\oint f_i\biggl(
\frac{z}{\alpha+z}\biggr) \,\mathrm{d}\log \biggl(\frac{(1-Y)\dddot s^2(z)+2\dddot s(z)+1-y}{(1-Y)\dddot
s^2+2\dddot s(z)+1} \biggr)
\\
&&{}+\frac{\frt}{4\uppi i}\oint f_i\biggl(\frac{z}{\alpha+z}\biggr) \,
\mathrm{d}\log \bigl(1-Y\dddot s^2(z) \bigl(1+\dddot s(z)
\bigr)^{-2} \bigr)
\\
&&{}+\frac{\frm_x-\frt-2}{2\uppi i}\oint yf_i\biggl(\frac{z}{\alpha
+z}\biggr)
\bigl(\dddot s(z)+1\bigr)^{-3} \,\mathrm{d}\dddot s(z)
\\
&&{}+\frac{\frm_\mathbbl{x}-\frt-2}{4\uppi i}\oint f_i\biggl(\frac{z}{\alpha
+z}\biggr)
\bigl(1-Y\dddot s^2(z) \bigl(1+\dddot s(z)\bigr)^{-2}\bigr)
\,\mathrm{d}\log\bigl(1-Y\dddot s^2(z) \bigl(1+\dddot s(z)
\bigr)^{-2}\bigr)
\end{eqnarray*}
and covariance functions
\begin{eqnarray*}
&&\cov(G_{f_i},G_{f_j})\\
&&\quad =-\frac{\frt+1}{4\uppi^2}\oint\oint
\frac
{f_i(\afrac{z_1}{\alpha+z_1})f_j(\afrac{z_2}{\alpha+z_2}) \,\mathrm{d}\dddot
s(z_1)\,\mathrm{d}\dddot s(z_2)}{(\dddot s(z_1)-\dddot s(z_2))^2}
\\
&&\qquad {}-\frac{y(\frm_x-\frt-2)+Y(\frm_\mathbbl{x}-\frt-2)}{4\uppi^2}\oint \oint\frac{f_i(\afrac{z_1}{\alpha+z_1})f_j(\afrac{z_2}{\alpha+z_2})
\,\mathrm{d}\dddot s(z_1)\,\mathrm{d}\dddot s(z_2)}{(\dddot s(z_1)+1)^2(\dddot s(z_2)+1)^2},
\end{eqnarray*}
where
\begin{eqnarray*}
s(z)&=& \frac{(1+y)(1-z)-\alpha z(1-Y)+\sqrt{((1-y)(1-z)+\alpha
z(1-Y))^2-4\alpha z(1-z)}}{2 z(1-z)(y(1-z)+\alpha zY)}- \frac{1}{ z},
\\
\dot s(z)&=&\frac{\alpha}{(\alpha+z)^2} s\biggl(\frac{z}{\alpha+z}\biggr)-
\frac
{1}{\alpha+z}, \qquad \ddot s(z)=-z^{-1}(1-y)+y\dot s(z),
\\
s_{\mathrm{mp}}^Y(z)&=&\frac{1-Y-z+\sqrt{(z-1-Y)^2-4Y}}{2Yz},\qquad  \dddot
s(z)=Ys_{\mathrm{mp}}^Y\bigl(- \ddot s(z)\bigr)+\bigl(\ddot s(z)
\bigr)^{-1}(1-Y).
\end{eqnarray*}
%
All the above contour integrals can be evaluated on any contour
enclosing the interval $[\frac{\alpha c_l}{1- c_l},\frac{\alpha
c_r}{1- c_r}]$.
\end{theorem}
%
\begin{remark}
Actually, this result should be right under the condition that $f_i$ is
analytic (or continuously differentiable) on an open region containing
the interval $[t_l,t_r]$. However its proof is more difficult at the
current stage because we do not have the following results of Beta
matrices: the exact separation of eigenvalues, the limit of the
smallest and the largest eigenvalues and the convergence rate of the
ESDF.
\end{remark}
%
\begin{remark}
In this theorem, the notions $s(z)$ and $s^Y_{\mathrm{mp}}(z)$ are the Stieltjes
transforms of the LSDFs of $\B_n$ and $\T_N$ respectively. If $Y<1$,
Zheng in \cite{Zheng12C} established the CLT of the LSS of $F$ matrix
$\S_n\T_N^{-1}$ whose proof is based on \cite{BaiS04C}. It is
apparent that our Theorem~\ref{clt} covers Zheng's result. In
addition, notice that the conclusions in Theorem~\ref{clt} and Theorem~4.1 in \cite{Zheng12C} have the same form. The reason is that, by
calculation we can easily get
\begin{eqnarray*}
\dot s(z)=\frac{1}{zy}-\frac{1}{z}-\frac{y(z(1-Y)+1-y)+2zY-y\sqrt {((1-y)+z(1-Y))^2-4z}}{2z(yz+Y)}
\end{eqnarray*}
which has the same expression of the Stieltjes transform of the LSDF of
$F$ matrices (see (2.6) in \cite{Zheng12C}). Here we want to remind
the reader that, when we use the last formula to calculate the density
function, that is, calculating $\uppi^{-1}\lim_{z\downarrow x+i0}\Im
\dot s(z)$, we can find that the condition $Y<1$ is not needed but
$y+Y>yY$ is necessary (see page 79 in \cite{BaiS10S} for more details).
\end{remark}
%
\begin{remark}
If $\{x_{ij}\}$ and $\{\mathbbl{x}_{ij}\}$ are independent standard
normal random variables and $p<\max\{n,N\}$, Beta matrices can be seen
as Beta--Jacobi ensemble with some parameter $\beta$. Some related
results about this ensemble can be found in \cite{DumitriuP12G} and
the references therein.
\end{remark}

This paper is organized as follows: In Section~\ref{sec2}, we present the proof
of Theorem~\ref{lsd}. Theorem~\ref{clt} is proved in Section~\ref{sec3} and
Section~\ref{sec4}. Some technical lemmas are given
in Section~\ref{sec5}.

\section{Proof of Theorem \texorpdfstring{\protect\ref{lsd}}{1.1}}\label{sec2}

In this section, we will give the proof of Theorem~\ref{lsd}. The main
tool we use here is the Stieltjes transform. Its function can be
explained by the following two lemmas.
%
\begin{lemma}[(Lemma~1.1 in \cite{BaiZ10L})]\label{lebaiz}
For any random matrix $\A_n$, let $F^{\A_n}$ denote the ESDF of $\A
_n$ and $s_{F^{\A_n}}(z)$ its Stieltjes transform. Then, if $F^{\A
_n}$ is tight with probability one and for each $z\in\mathbb{C}^+$,
$s_{F^{\A_n}}(z)$ converges almost surely to a non-random limit
$s_F(z)$ as $n \to\infty$, then there exists a non-random probability
distribution $F$ taking $s_F(z)$ as its
Stieltjes transform such that with probability one, as $n\to\infty$,
$F^{\A_n}$ converges weakly to F.
\end{lemma}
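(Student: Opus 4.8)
The plan is to deduce this from two standard facts about Stieltjes transforms — the continuity theorem and the inversion formula — together with the tightness hypothesis, the only real work being to arrange a single probability-one event on which all the required convergences hold simultaneously, so that the remainder of the argument is purely deterministic. Fix once and for all a countable set $D\subset\mathbb{C}^+$ that has an accumulation point in $\mathbb{C}^+$, say $D=\{z\in\mathbb{C}:\Re z\in\mathbb{Q},\ \Im z\in\mathbb{Q},\ \Im z>0\}$. For each fixed $z\in D$ the hypothesis gives $s_{F^{\A_n}}(z)\to s_F(z)$ on a set of probability one; intersecting these countably many events with the probability-one event on which $\{F^{\A_n}\}$ is tight produces a single event $\Omega_0$ with $\mathbb{P}(\Omega_0)=1$ on which $s_{F^{\A_n}}(z)\to s_F(z)$ for every $z\in D$ and $\{F^{\A_n}\}$ is tight. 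From here on I argue for a fixed but arbitrary realization in $\Omega_0$, so that all objects are deterministic.

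On $\Omega_0$, by tightness and Helly's selection theorem every subsequence of $\{F^{\A_n}\}$ admits a further subsequence converging weakly to some distribution function $G$; tightness guarantees that no mass escapes to infinity, so $G$ is again a genuine probability distribution function. Since $\lambda\mapsto(\lambda-z)^{-1}$ is bounded and continuous on $\mathbb{R}$ for each fixed $z\in\mathbb{C}^+$, weak convergence along this sub-subsequence yields $s_{F^{\A_n}}(z)\to s_G(z)$ for every $z\in\mathbb{C}^+$, and comparing with the first step gives $s_G(z)=s_F(z)$ for all $z\in D$.

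Both $s_G$ and $s_F$ are holomorphic on the connected open set $\mathbb{C}^+$ and agree on $D$, which has an accumulation point in $\mathbb{C}^+$; by the identity theorem $s_G\equiv s_F$ on $\mathbb{C}^+$. In particular $s_F$ is exhibited as the Stieltjes transform of an honest probability distribution, and since by the Stieltjes inversion formula a finite measure is uniquely determined by its Stieltjes transform, the weak limit $G$ does not depend on the chosen subsequence. Hence every subsequential weak limit of $\{F^{\A_n}\}$ equals one and the same probability distribution $F$, whose Stieltjes transform is $s_F$; and a sequence in a metric space each of whose subsequences has a further subsequence converging to a common limit converges to that limit, so $F^{\A_n}\to F$ weakly. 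This holds for every realization in $\Omega_0$, hence with probability one, and $F$ is non-random because $s_F$ is.

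The only point that genuinely requires care is the use of the tightness hypothesis: pointwise convergence of the Stieltjes transforms by itself forces $s_F$ only to be the Stieltjes transform of a \emph{sub}-probability measure, since a portion of the mass could leak to $\pm\infty$ in the limit, and it is precisely the almost-sure tightness that upgrades every subsequential limit to a genuine probability distribution and rules this out. Everything else — producing one null set valid for all $z\in D$ at once, the application of the identity theorem, and the subsequence criterion for convergence — is routine.
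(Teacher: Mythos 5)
The paper states this as Lemma 1.1 quoted from \cite{BaiZ10L} and gives no proof of its own, so there is no in-paper argument to compare against; but your proof is a correct rendering of the standard argument, and it is exactly the kind of proof one would expect in the cited reference. The ingredients --- collapsing the countably many almost-sure events over a countable set $D\subset\mathbb{C}^+$ with an accumulation point plus the tightness event into one full-measure event, Helly's selection theorem, continuity of weak convergence against the bounded continuous kernel $\lambda\mapsto(\lambda-z)^{-1}$, the identity theorem, the Stieltjes inversion formula, and the subsequence criterion --- are all correctly deployed, and you correctly identify tightness as the hypothesis that keeps subsequential limits from being sub-probability measures.

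One small point of wording worth tightening: you write that ``both $s_G$ and $s_F$ are holomorphic on $\mathbb{C}^+$,'' but $s_F$ is only defined as a pointwise almost-sure limit and is not a priori holomorphic, and on your event $\Omega_0$ you only control its values on $D$. The clean version of that step is to compare two subsequential weak limits $G$ and $G'$: both $s_G$ and $s_{G'}$ are Stieltjes transforms of probability measures, hence holomorphic on $\mathbb{C}^+$, and both agree with $s_F$ on $D$, hence with each other on $D$; the identity theorem then gives $s_G\equiv s_{G'}$ on $\mathbb{C}^+$, and the inversion formula gives $G=G'$. This yields the same conclusion without ever asserting regularity of $s_F$ itself. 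Apart from this cosmetic issue the argument is complete.
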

%
\begin{lemma}[(Theorem~2.1 in \cite{SilversteinC95A})]\label{lesc}
Let $G$ be a function of bounded variation and $x_0\in\mathbb{R}$.
Suppose that $\lim_{z\in\mathbb{C}^+\to x_0}\Im s_G(z)$ exists. Its
limit is
denoted by $\Im s_G(x_0)$. Then $G$ is differentiable at $x_0$, and its
derivative is $\uppi^{-1}\Im s_G(x_0)$.
\end{lemma}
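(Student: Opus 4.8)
The plan is to recover the pointwise derivative of $G$ at $x_0$ from the classical \emph{averaged} Stieltjes inversion formula, the crucial point being that the hypothesis furnishes the \emph{unrestricted} limit of $\Im s_G(z)$ as $z\to x_0$ inside $\mathbb{C}^+$, not merely a limit along vertical lines. Write $z=x+iy$ with $y>0$, so that $\Im s_G(x+iy)=\int \frac{y}{(\la-x)^2+y^2}\,dG(\la)$; this is legitimate because $G\in BV$ makes $dG$ a finite signed measure. Fix reals $a<b$ and integrate in $x$ over $[a,b]$. Since $\int_a^b\frac{y\,dx}{(\la-x)^2+y^2}=\arctan\frac{b-\la}{y}-\arctan\frac{a-\la}{y}\le\pi$ uniformly in $\la,y$, Fubini gives
\[
\frac1\pi\int_a^b\Im s_G(x+iy)\,dx=\int\psi_y(\la)\,dG(\la),\qquad \psi_y(\la):=\frac1\pi\Big(\arctan\tfrac{b-\la}{y}-\arctan\tfrac{a-\la}{y}\Big).
\]
As $|\psi_y|\le1$ and $\psi_y(\la)\to\mathbf 1_{(a,b)}(\la)$ for $\la\notin\{a,b\}$, dominated convergence with respect to $|dG|$ yields, for every pair of continuity points $a<b$ of $G$,
\[
\lim_{y\downarrow0}\frac1\pi\int_a^b\Im s_G(x+iy)\,dx=G(b)-G(a).
\]

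Set $L:=\lim_{z\to x_0}\Im s_G(z)$. First I would note that $G$ is continuous at $x_0$: a short dominated-convergence argument gives $y\,\Im s_G(x_0+iy)=\int\frac{y^2}{(\la-x_0)^2+y^2}\,dG(\la)\to G(x_0^+)-G(x_0^-)$ as $y\downarrow0$, so a nonzero jump at $x_0$ would force $\Im s_G(x_0+iy)\to\pm\infty$, contradicting the finiteness of $L$. Next, given $\ep>0$, choose $\de>0$ with $|\Im s_G(z)-L|<\ep$ whenever $z\in\mathbb{C}^+$ and $|z-x_0|<\de$. For continuity points $a<b$ of $G$ with $[a,b]\subset(x_0-\tfrac\de2,x_0+\tfrac\de2)$ and $0<y<\tfrac\de2$, every $x+iy$ with $x\in[a,b]$ satisfies $|x+iy-x_0|<\de$, so $|\Im s_G(x+iy)-L|<\ep$ there; letting $y\downarrow0$ in the averaged formula gives
\[
\Big|G(b)-G(a)-\tfrac{L}{\pi}(b-a)\Big|\le\tfrac{\ep}{\pi}(b-a).
\]

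Finally, the discontinuity set of a $BV$ function is countable, so continuity points of $G$ are dense. Fixing $h$ with $0<|h|<\tfrac\de2$, I would choose continuity points of $G$ converging to $x_0$ and to $x_0+h$ respectively and pass to the limit in the last display, using continuity of $G$ at $x_0$ and (say, after normalizing $G$ to be right-continuous, which changes neither $s_G$ nor the value $G(x_0)$) right-continuity to identify the one-sided limits; this yields $|G(x_0+h)-G(x_0)-\tfrac{L}{\pi}h|\le\tfrac{\ep}{\pi}|h|$ for all such $h$. Hence $\big|\tfrac{G(x_0+h)-G(x_0)}{h}-\tfrac{L}{\pi}\big|\le\tfrac{\ep}{\pi}$, and since $\ep$ is arbitrary, $G$ is differentiable at $x_0$ with $G'(x_0)=\tfrac1\pi L=\tfrac1\pi\Im s_G(x_0)$.

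The Fubini and dominated-convergence bookkeeping and the treatment of the end-point atoms are routine. The step carrying the real content — and the one that would hide a gap if handled carelessly — is recognizing that the averaged formula controls only \emph{averages} of $\Im s_G$ over short horizontal segments, so that the full two-dimensional limit hypothesis is precisely what is needed to force all those averages to be $\ep$-close to $L$; a limit taken only along the vertical would not close the argument. The second, milder subtlety is the final passage from increments between continuity points to the difference quotient at an arbitrary $h$, which rests on the countability of the jump set of $G$ together with the continuity of $G$ at $x_0$ established at the outset.
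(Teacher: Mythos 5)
Your proof is correct; the paper does not actually prove this lemma but quotes it from Silverstein and Choi, and your argument --- the averaged Stieltjes inversion formula combined with the observation that the full two-dimensional limit hypothesis forces the horizontal averages of $\Im s_G$ near $x_0$ to be uniformly close to $L$ --- is essentially the argument of that reference. The normalization caveat you flag (the statement concerns the right-continuous version of $G$, and finiteness of $L$ rules out an atom of $dG$ at $x_0$) is handled appropriately.
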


Theorem~\ref{lsd} follows from the following Theorem~\ref{lsdt}.
%
\begin{theorem}\label{lsdt}
Under the conditions \textup{(i)} and \textup{(ii)} in Theorem~\ref{lsd}, we assume that:
\begin{enumerate}[(ii)]
\item[(1)]$\{\A_p\}$ is a sequence of $p\times p$ Hermitian matrices
with uniformly bounded spectral norm in $n$ with probability one
and the ESDFs of $\{\A_p\}$ almost surely tend to a non-random limit
$F^{\A}$ as $p\to\infty$.
\item[(2)] The smallest eigenvalue of matrices $\{\S_n+\alpha_n\A
_p\}$ almost surely tends to a positive value as $n\to\infty$ and
$p\to\infty$.
\end{enumerate}
Then we have $F^{\underline\B_n}\stackrel{a.s.}{\longrightarrow}
\underline F$, where $\underline\B_n=\S_n(\S_n+\alpha_n\A
_p)^{-1}$ and $\underline F$ is a non-random distribution function
whose Stieltjes transform $\underline s=\underline s(z)=s_{\underline
F}(z)$ satisfies
%
\begin{eqnarray}
\label{ssz} \underline s=\int\frac{(1-y(1-z)(z \underline s+1))+\alpha
t}{(1-z)(1-y(1-z)(z \underline s+1))-{ \alpha z}{t}}\,\mathrm{d}F^{\A}(t),
\end{eqnarray}
%
and in the set $\{\underline s\dvtx  \underline s\in\mathbb{C}^{+}\}$ the
solution to \eqref{ssz} is unique.
\end{theorem}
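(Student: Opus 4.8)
The plan is to establish Theorem~\ref{lsdt} via the standard Stieltjes transform / Marchenko--Pastur machinery for sample covariance matrices, adapting it to handle the possibly non-invertible matrix $\S_n$. The starting observation is an algebraic identity: if $\dot s$ denotes the Stieltjes transform of the ESDF of the companion matrix $\S_n + \a_n\A_p$, or more precisely of a suitable linear transformation relating $\underline\B_n$ to $\S_n$ and $\S_n+\a_n\A_p$, then the eigenvalues of $\underline\B_n = \S_n(\S_n+\a_n\A_p)^{-1}$ can be written in terms of those of $\S_n$ sandwiched by $(\S_n+\a_n\A_p)^{-1/2}$. First I would use the change of variable $t \mapsto t/(1-t)$ (suggested by the relation $\B_n = \S_n\T_N^{-1}(\S_n\T_N^{-1}+\a_n\I)^{-1}$ in the invertible case, equation \eqref{betanfn}) to reduce the problem to understanding the ESDF of $(\S_n+\a_n\A_p)^{-1/2}\S_n(\S_n+\a_n\A_p)^{-1/2}$, equivalently a generalized sample covariance matrix $\S_n$ with "population covariance" $(\S_n+\a_n\A_p)^{-1}$ — except that the latter is random and correlated with $\S_n$, so the trick is instead to write $\S_n + \a_n\A_p$ itself as a single sample-covariance-type object (as in Remark~\ref{remark1.2}) and analyze the joint structure.

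The core steps I would carry out are: (a) show that $F^{\underline\B_n}$ is almost surely tight --- this follows from condition (2), which gives an a.s.\ lower bound on $\la_1^{\S_n+\a_n\A_p}$, together with the a.s.\ bound on $\|\A_p\|$ from condition (1) and the a.s.\ bound on $\|\S_n\|$ under (i)--(ii), so that all eigenvalues of $\underline\B_n$ lie in a fixed compact set; (b) fix $z\in\mathbb{C}^+$ and derive a self-consistent equation for $s_{F^{\underline\B_n}}(z)$ by writing $s_{F^{\underline\B_n}}(z) = \frac1p\mathrm{tr}(\underline\B_n - z\I)^{-1}$, expanding via the Sherman--Morrison / leave-one-column-out decomposition of $\S_n = \frac1n\sum_{j} \x_j\x_j^*$, and using standard concentration estimates (rank-one perturbation bounds, the trace lemma / quadratic-form concentration under the finite fourth moment from (v), and martingale arguments) to show the random quantities concentrate around their limits; (c) identify the limiting equation as \eqref{ssz} and verify that in $\{\,\underline s\in\mathbb{C}^+\,\}$ the solution is unique, using a monotonicity/contraction argument à la Silverstein; (d) invoke Lemma~\ref{lebaiz} to conclude that $F^{\underline\B_n}$ converges weakly a.s.\ to the distribution $\underline F$ whose Stieltjes transform is this unique solution. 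Then Theorem~\ref{lsd} follows by specializing $\A_p = \T_N$, whose ESDF converges a.s.\ to the M-P law with ratio $Y$, and plugging the explicit M-P Stieltjes transform into \eqref{ssz}, then solving the resulting quadratic and applying Lemma~\ref{lesc} to extract the density.

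The main obstacle I anticipate is step (b), specifically controlling the interaction between $\S_n$ and $\A_p$ when neither is invertible and $p > \max\{n,N\}$: one cannot simply diagonalize $\A_p$ and treat $(\S_n + \a_n\A_p)^{-1}$ as a deterministic weight, because on the kernel of $\A_p$ the matrix $\S_n + \a_n\A_p$ reduces to $\S_n$, which is itself singular there. The resolution is to exploit that $\S_n + \a_n\A_p$ is invertible a.s.\ by condition (2) even though each summand is not, and to set up the leave-one-out expansion on the full matrix $\S_n + \a_n\A_p$ rather than on $\S_n$ alone --- effectively treating $\underline\B_n = \I - \a_n\A_p(\S_n+\a_n\A_p)^{-1}$ and analyzing $\A_p(\S_n+\a_n\A_p)^{-1}$. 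A secondary technical point is the truncation and centralization of the entries $x_{ij}$ (and later $\mathbbm x_{ij}$) to reduce to bounded variables with matching first two moments, which is routine but needs the fourth-moment bound in (v); and one must check that the limiting fixed-point equation \eqref{ssz} indeed has a unique solution in the upper half-plane, for which I would adapt Silverstein's argument showing the map $\underline s \mapsto \int \frac{(1-y(1-z)(z\underline s+1))+\a t}{(1-z)(1-y(1-z)(z\underline s+1))-\a z t}\,dF^{\A}(t)$ is, on the relevant domain, a strict contraction or has a unique fixed point by an imaginary-part comparison.
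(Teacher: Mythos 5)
Your outline has the right high-level destination (Stieltjes transform, tightness, self-consistent equation, uniqueness via Silverstein, then invoke Lemma~\ref{lebaiz}) but there are two concrete gaps, one in the tightness step and one in the core identification of the limiting equation, and the overall route diverges from the paper's in a way that matters.

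First, the tightness argument as you state it is not valid. Theorem~\ref{lsdt} assumes only conditions (\romannumeral1)--(\romannumeral2) of Theorem~\ref{lsd} --- finite second moments --- not the fourth-moment condition (\romannumeral5). Under (\romannumeral1)--(\romannumeral2) alone, $\|\S_n\|$ is \emph{not} almost surely bounded (individual eigenvalues can escape to infinity even though the bulk converges to M--P), so you cannot conclude that all eigenvalues of $\underline\B_n$ lie in a fixed compact set. What the paper uses is the strictly weaker fact that $F^{\S_n}$ is tight (which does follow from weak convergence to M--P), combined with Lemma~\ref{leAB} to split $F^{\underline\B_n}\{(x_1x_2,\infty)\}$ into a piece controlled by $F^{\S_n}$ and a piece controlled by the lower bound on $\la_1^{\S_n+\a_n\A_p}$ from hypothesis (2). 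Relatedly, you invoke ``the finite fourth moment from (v)'' in step (b), but (v) is not among the hypotheses of Theorem~\ref{lsdt}; it only enters later, when specializing $\A_p=\T_N$, to verify hypothesis (2).

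Second, your ``resolution'' of the singularity issue --- switching to $\underline\B_n=\I-\a_n\A_p(\S_n+\a_n\A_p)^{-1}$ and doing a leave-one-out expansion on the full matrix $\S_n+\a_n\A_p$ --- is not actually a mechanism: you would still have to derive the LSD of $\A_p(\S_n+\a_n\A_p)^{-1}$ from scratch, and the random multiplier $(\S_n+\a_n\A_p)^{-1}$ correlated with $\S_n$ is precisely the obstacle. The paper's key idea, which your proposal is missing, is an $\ep$-regularization: define $\underline\B_\ep=\S_n(\S_n+\a_n\A_p+\ep\I)^{-1}$, so that $\a_n\A_p+\ep\I$ \emph{is} invertible, and write $\underline\B_\ep=\I-(\widehat{\underline\B}_\ep+\I)^{-1}$ with $\widehat{\underline\B}_\ep=\S_n(\a_n\A_p+\ep\I)^{-1}$. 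The matrix $\widehat{\underline\B}_\ep$ is a sample covariance matrix times a (conditionally) deterministic Hermitian matrix, which is exactly the setting of Silverstein's 1995 theorem, so one can quote the limiting Stieltjes-transform equation rather than re-derive it via a fresh martingale/leave-one-out analysis. Then Lemma~\ref{A.47} gives $L^3(F^{\underline\B_n},F^{\underline\B_\ep})\le C\ep^2$ a.s., so letting $\ep\to0$ transfers the LSD to $\underline\B_n$ and turns the $\ep$-dependent equation into \eqref{ssz}. Uniqueness is then handled, as you roughly suspected, by a change of variables sending $\underline s$ to the Stieltjes transform of a measure on $[0,\infty)$ so that the equation becomes Silverstein's and the known uniqueness applies. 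So the paper's proof is a reduction to known results via regularization, not a new expansion argument; your plan, even with the tightness step fixed, would require proving a generalization of Silverstein's theorem that the paper carefully avoids.
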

By Lemma~\ref{lebaiz}, we know that to prove Theorem~\ref{lsdt} we
just need to prove three conclusions: (1) $\{F^{\underline\B_n}\}$ is
tight a.s. (2) $s_{F^{\underline\B_n}}\stackrel
{\mathrm{a.s.}}{\longrightarrow} s$ with $s$ satisfying \eqref{ssz}. (3) The
solution to \eqref{ssz} is unique in $\mathbb{C}^{+}$. Now we prove
Theorem~\ref{lsdt} step by step.

\subsection{Proof of Theorem \texorpdfstring{\protect\ref{lsdt}}{2.3}}

\textit{Step 1}: Applying Lemma~\ref{leAB} directly, we have for
any $x_1,x_2\geq0$
%
\begin{eqnarray}\label{eqtt}
F^{\underline\B_n}\bigl\{(x_1x_2,\infty)\bigr\}&\leq&
F^{\S_n}\bigl\{(x_1,\infty )\bigr\}+F^{ (\S_n+\alpha_n\A_p )^{-1}}\bigl
\{(x_2,\infty)\bigr\}
\nonumber
\\[-8pt]\\[-8pt]
&=& F^{\S_n}\bigl\{(x_1,\infty)\bigr\}+F^{(\S_n+\alpha_n\A_p)}
\bigl\{(0,1/x_2)\bigr\} .\nonumber
\end{eqnarray}
It is known that, under the assumptions of Theorem~\ref{lsdt}, with
probability one $F^{\S_n}$ tends to the M--P distribution $F_{\mathrm{mp}}^y$,
which has a density function
%
\begin{eqnarray}
\label{mplaw} f_{\mathrm{mp}}^y(x)= %
\cases{
\displaystyle \frac{1}{2\uppi xy }\sqrt{(b-x) (x-a)},&\quad  \mbox{if }$a\leq x\leq b$,
\cr
0&\quad
\mbox{otherwise,} } %
\end{eqnarray}
and has a point mass $1-1/y$ at the origin if $y>1$,
where $a=(1-\sqrt{y})^2$ and $b=(1+\sqrt{y})^2$.
Thus $\{F^{\S_n}\}$ is tight almost surely, that is, the first term on
the right-hand side of (\ref{eqtt}) can be arbitrarily small by
choosing $x_{1}$ large.

On the other hand, by the second assumption of Theorem~\ref{lsd}, the
second term on the right-hand side of (\ref{eqtt}) can be arbitrarily
small as $n$ is large, provided that $1/x_2$ is smaller than the
smallest eigenvalue of the matrices $\{\S_n+\alpha_n\A_p\}$.
Thus $\{F^{\underline\B_n}\}$ is tight almost surely.

\textit{Step 2}:
Recalling the definition of Stieltjes transform we have that for $z\in
\bbC^{+}$
%
\begin{eqnarray}
s_{F^{\underline\B_n}}(z)=\frac{1}{p}\sum_{i=1}^p
\frac{1}{\la
_i^{\underline\B_n}-z} 
=\frac{1}{p}\tr (
\underline\B_n-z\I )^{-1}.
\end{eqnarray}
Here we have used the fact that $\underline\B_n$ has the same
eigenvalues as
\[
\S_n^{1/2} (\S_n+\alpha_n
\A_p )^{-1}\S_n^{1/2}.
\]
Denote $\underline\B_\ep=\S_n (\S_n+\alpha_n\A_p+\ep\I
)^{-1}$ with small $\ep>0$.
From Lemma~\ref{A.47}, we have
\begin{eqnarray*}
L^3\bigl(F^{\underline\B_n} , F^{\underline\B_\ep}\bigr)\leq
\frac
{1}{n}\tr (\underline\B_n-\underline\B_\ep) (
\underline\B _n-\underline\B_\ep)^*.
\end{eqnarray*}
By the fact
\begin{eqnarray*}
\underline\B_n-\underline\B_\ep&=&\ep
\S_n^{1/2} (\S _n+\alpha_n
\A_p )^{-1/2} (\S_n+\alpha_n
\A_p+\ep\I )^{-1} (\S_n+\alpha_n
\A_p )^{-1/2}\S_n^{1/2}
\\
&\le&\ep (\S_n+\alpha_n\A_p+\ep\I
)^{-1}
\end{eqnarray*}
together with condition
(2) in Theorem~\ref{lsdt},
we obtain almost surely that
$
L^3(F^{\underline\B_n},F^{\underline\B_\ep})\leq
C\ep^2$,
which implies
$
\lim_{\ep\to0}\lim_{n\to\infty}L(F^{\underline\B
_n},F^{\underline\B_\ep})=0$.

Next, we consider the LSDF of $\underline\B_\ep$. Noticing that the
matrix $\alpha_n\A_p+\ep\I$ is invertible for any $\ep>0$, we have
\begin{eqnarray*}
\underline\B_\ep=\I- (\widehat{\underline\B}_\ep+\I
)^{-1},
\end{eqnarray*}
where
$\widehat{\underline\B}_\ep=\S_n(\alpha_n\A_p+\ep\I)^{-1}$.
Thus, we get that
$
F^{\underline\B_\ep}(x)=
F^{\widehat{\underline\B}_\ep}(\frac{1}{1-{x}{}}-1)
$
and
%
\begin{eqnarray}\label{sfbep}
s_{F^{\underline\B_\ep}}(z)= 
\frac{1}{1-z}+
\frac{1}{(1-z)^2}s_{F^{\hat{\underline\B}_\ep
}}\biggl(\frac{z}{1-z}\biggr).
\end{eqnarray}
Silverstein in \cite{Silverstein95S} derived that for any $z\in
\mathbb{C}^+$, the Stieltjes transform of the ESDF of $\widehat
{\underline\B}_\ep$ has a non-random limit, denoted by $s_{\hat\ep
}(z)$, which satisfies the equation
\begin{eqnarray*}
s_{\hat{\ep}}(z)=\int\frac{1}{t(1-y-yzs_{\hat{\ep}}(z))-z}\,\mathrm{d}F^{\A
}_\ep(t),
\end{eqnarray*}
where $F^{\A}_\ep$ is the LSDF of $(\alpha_n\A_p+\ep\I)^{-1}$.
Note that $\Im(z/(1-z))=|1-z|^{-2}\Im z>0$.
Thus by \eqref{sfbep} we get that almost surely $s_{F^{\underline\B
_\ep}}(z)$ tends to a non-random limit, denoted by $\underline s_{\ep
}(z)$, which satisfies
\begin{eqnarray*}
&&(1-z)^2\underline s_\ep(z)-(1-z)\\
&&\quad =\int\frac{1}{t (1-y-y(\afrac
{z}{1-z}) ((1-z)^2\underline s_\ep(z)-(1-z) ) )-\afrac
{z}{1-z}}
\,\mathrm{d}F^{\A}_\ep(t). 
\end{eqnarray*}
%
By definition of $F^{\A}_\ep$ and $F^{\A}$, we have that
\begin{eqnarray*}
\mathrm{d}F^{\A}_\ep(t)=-\mathrm{d}F^{\A}
\biggl(\frac{t^{-1}-\ep}{\alpha}\biggr).
\end{eqnarray*}
%
Therefore letting $\ep\to0$, we have
%
\begin{eqnarray}\label{res1}
\underline s 
 =\int\frac{(1-y(1-z)(z \underline s+1))+\alpha t}{(1-z)(1-y(1-z)(z
\underline s+1))-{ \alpha z}{t}}\,
\mathrm{d}F^{\A}(t).
\end{eqnarray}
%

\textit{Step 3}: From Lemma~\ref{lebaiz}, we conclude that there
exists a distribution function $G$ with support $\Psi_G\subset[0,1]$
satisfying for any $z\in\mathbb C^{+}$,
%
\begin{eqnarray}\label{lasteq}
\underline s(z)=\int_{\Psi_G}\frac{1}{x-z}\,
\mathrm{d}G(x).
\end{eqnarray}
Noticing that $\Im{z}({\alpha+z})^{-1}=\alpha|\alpha+z|^{-2}\Im
z>0$, we infer from \eqref{lasteq} that
\begin{eqnarray*}
\frac{\alpha}{(\alpha+z)^2}\underline s \biggl(\frac{z}{\alpha
+z} \biggr)-
\frac{1}{
\alpha+z}&=&\frac{\alpha}{(\alpha+z)^2}\int_{\Psi_G}
\frac
{1}{x-\afrac{z}{\alpha+z}}\,\mathrm{d}G(x)-\frac{1}{\alpha+z}
\\
&=&\int_{\Psi_G}\frac{1-x}{\alpha x-z(1-x)}\,\mathrm{d}G(x)=\int
_{0}^{\infty
}\frac{1}{x-z}\,\mathrm{d}G \biggl(
\frac{x}{\alpha+x} \biggr).
\end{eqnarray*}
Thus
%
\begin{eqnarray}
\label{us1} \dot\us=\dot\us(z)=\frac{\alpha}{(\alpha+z)^2}\underline s \biggl(
\frac{z}{\alpha+z} \biggr)-\frac{1}{
\alpha+z}
\end{eqnarray}
is a Stieltjes transform of the distribution function $G(\frac
{x}{\alpha+x})$ with $x\in[0,\infty)$.
Notice that even if $G(x)$ has a point mass at $x=1$, we have $\frac
{1-x}{\alpha x-z(1-x)}=0$.
Thus, \eqref{res1} can be represented as
\begin{eqnarray*}
\dot\us(z)=\int_{\bbR^+}\frac{1}{t(1-y-yz \dot \us
(z))-z}\,\mathrm{d}
\biggl(1-F^{\A}\biggl(\frac{1}{ t}\biggr)\biggr),
\end{eqnarray*}
where $\bbR^+=\{t\dvtx t\in\bbR,t>0\}$.
It is shown that the solution of the last equation is unique in
$\mathbb C^+$ (see \cite{Silverstein95S}). Thus,
we obtain that \eqref{res1} has a unique solution in $\mathbb{C}^+$,
which completes the proof of Theorem~\ref{lsdt}.

\subsection{Proof of Theorem \texorpdfstring{\protect\ref{lsd}}{1.1}}
Using Theorem~\ref{lsdt} and Remark~\ref{remark1.2}, we know that the
Stieltjes transform of $F^{}$ is the unique solution in $\mathbb{C}^+$
to the equation
%
\begin{eqnarray}\label{sti1}
s=\int\frac{(1-y(1-z)(z s+1))+\alpha t}{(1-z)(1-y(1-z)(z s+1))-{
\alpha z}{t}}\,\mathrm{d}F^{Y}_{\mathrm{mp}}(t).
\end{eqnarray}
Here $F^{Y}_{\mathrm{mp}}$ is the limit of $F^{\T_N}$ which is also the M--P
distribution.
After elementary calculations, we may represent the last equation as
%
\begin{eqnarray}\label{sss}
s=-\frac{1}{z}-\frac{\vp}{\alpha z^2}\int\frac{1}{t-\sklafrac
{(1-z)\vp}{ \alpha z}}\,
\mathrm{d}F^Y_{\mathrm{mp}}(t),
\end{eqnarray}
where $\vp=1-y(1-z)(z s+1)$.
Recalling \eqref{us1}, we have that
\begin{eqnarray*}
s(z)=\frac{1}{1-z}+\frac{\alpha}{(1-z)^2}\dot s\biggl(\frac{\alpha z}{1-z}\biggr)
\end{eqnarray*}
and
\begin{eqnarray*}
\vp=1-y(1-z) (z s+1)=1- y-\frac{\alpha yz}{(1-z)}\dot s\biggl(\frac{\alpha z}{1-z}
\biggr),
\end{eqnarray*}
which implies
\begin{eqnarray*}
\frac{(1-z)\vp}{ \alpha z}= \frac{(1-z)(1- y)}{ \alpha z}-y\dot s\biggl(\frac{\alpha z}{1-z}\biggr).
\end{eqnarray*}
Noticing that $\Im\frac{\alpha z}{1-z}>0$, we have $\Im \frac
{(1-z)\vp}{ \alpha z}<0$ and
\begin{eqnarray*}
\int\frac{1}{t-\sklafrac{(1-z)\vp}{ \alpha z}}\,\mathrm{d}F^Y_{\mathrm{mp}}(t)=\overline
{s^Y_{\mathrm{mp}} \biggl(\frac{(1-\bar z)\vp(\bar z)}{ \alpha\bar z} \biggr)},
\end{eqnarray*}
where $s^Y_{\mathrm{mp}}$ is the Stieltjes transform of the M--P distribution
$F^Y_{\mathrm{mp}}$. Since
\begin{eqnarray*}
s^Y_{\mathrm{mp}}(z)=\frac{1-Y-z+\sqrt{(z-1-Y)^2-4Y}}{2Yz},
\end{eqnarray*}
the equation \eqref{sss} implies
\begin{eqnarray*}
s=-\frac{1}{z}-\frac{\vp}{\alpha z^2} \biggl(\frac{1-Y-\sklafrac
{(1-z)\vp}{ \alpha z}+\sqrt{(\sklafrac{(1-z)\vp}{ \alpha
z}-1-Y)^2-4Y}}{2Y\sklafrac{(1-z)\vp}{ \alpha z}} \biggr),
\end{eqnarray*}
where, and throughout this section, the square-root of a complex number
is specified as the one with positive imaginary part.
The solution to this equation is
\begin{eqnarray*}
s(z)= \frac{(1+y)(1-z)-\alpha z(1-Y)+\sqrt{((1-y)(1-z)+\alpha
z(1-Y))^2-4\alpha z(1-z)}}{2 z(1-z)(y(1-z)+\alpha zY)}- \frac{1}{ z}.
\end{eqnarray*}
Now using Lemma~\ref{lesc} and letting $z\downarrow x+\mathrm{i}0$, $\uppi
^{-1}\Im s(z)$ tends to the density function of the LSDF of $\B_n$.
Thus, the density function of the LSDF of $\B_n$ is
\begin{eqnarray*}
\lleft\{ %
\begin{array} {l} \displaystyle \frac{\sqrt{4\alpha x(1-x)-((1-y)(1-x)+\alpha x(1-Y))^2}}{2 \uppi
x(1-x)(y(1-x)+\alpha xY)}, \\
\hphantom{0,\qquad\mbox{}} \mbox{if $4\alpha
x(1-x)-\bigl((1-y) (1-x)+\alpha x(1-Y)\bigr)^2>0$;}
\\
0,\qquad \mbox{otherwise.} \end{array} %
\rright.
\end{eqnarray*}
Or equivalently,
\begin{eqnarray*}
\lleft\{ %
\begin{array} {l@{\qquad}l} \displaystyle \frac{\sqrt{((\alpha(1-Y)-1+y)^2+4\alpha)(x_r-x)(x-x_l)}}{2 \uppi
x(1-x)(y(1-x)+\alpha xY)}, & \mbox{if
$x_l<x<x_r$;}
\\
0, & \mbox{otherwise,} \end{array} %
\rright.
\end{eqnarray*}
where $x_l,x_r= (\frac{2\alpha-(1-y)[\alpha(1-Y)-1+y]\mp
2\alpha\sqrt{y-yY+Y}}{(\alpha(1-Y)-1+y)^2+4\alpha} )$. Now we
determine the possible atom at $0$ and $1$. When $z\to0$ with $\Im
z>0$, we have
\begin{eqnarray*}
&&\Im\bigl[\bigl((1-y) (1-z)+\alpha z(1-Y)\bigr)^2-4\alpha z(1-z)
\bigr]
\\
&&\quad =2\Im z \bigl\{ \bigl[(1-Y)\alpha-1+y \bigr] \bigl[(1-y) (1-\Re z)+\alpha (1-Y)
\Re z \bigr]-2\alpha(1-2\Re z) \bigr\}<0.
\end{eqnarray*}
By the fact that the real part of $\sqrt{g(z)}$ has the same sign as
that of the imaginary part of $g(z)$, we obtain that $\Re\sqrt {((1-y)(1-z)+\alpha z(1-Y))^2-4\alpha z(1-z)}<0$. Thus
\begin{eqnarray*}
\sqrt{\bigl((1-y) (1-z)+\alpha z(1-Y)\bigr)^2-4\alpha z(1-z)}
\to-|1-y|.
\end{eqnarray*}
Consequently,
\begin{eqnarray*}
F\{0\}=-\lim_{z\to0}zs(z)=\frac{|1-y|-1-y}{2y}+1 =\lleft\{
\begin{array} {l@{\qquad}l} \displaystyle \frac{y-1}{y}, & \mbox{if $y>1$;}
\\
0, & \mbox{otherwise.} \end{array} %
\rright.
\end{eqnarray*}

When $z\to1$ with $\Im z>0$, we have
\begin{eqnarray*}
&&\Im\bigl[\bigl((1-y) (1-z)+\alpha z(1-Y)\bigr)^2-4\alpha z(1-z)
\bigr]
\\
&&\quad =2\Im z \bigl\{ \bigl[(1-Y)\alpha-1+y \bigr] \bigl[(1-y) (1-\Re z)+\alpha (1-Y)
\Re z \bigr]-2\alpha(1-2\Re z) \bigr\}>0.
\end{eqnarray*}
Hence, we get $\Re\sqrt{((1-y)(1-z)+\alpha z(1-Y))^2-4\alpha
z(1-z)}>0$. Thus,
\begin{eqnarray*}
\sqrt{\bigl((1-y) (1-z)+\alpha z(1-Y)\bigr)^2-4\alpha z(1-z)}\to
\alpha|1-Y|.
\end{eqnarray*}
Consequently,
\begin{eqnarray*}
F\{1\}=-\lim_{z\to1}(z-1)s(z)=\frac{|1-Y|-(1-Y)}{2 Y} =\lleft\{
\begin{array} {l@{\quad}l} \displaystyle \frac{Y-1}{Y}, &\quad  \mbox{if $Y>1$;}
\\
0, &\quad  \mbox{otherwise.} \end{array} %
\rright.
\end{eqnarray*}
Then the proof of Theorem~\ref{lsd} is complete.

\section{Framework of proving Theorem \texorpdfstring{\protect\ref{clt}}{1.6}}\label{sec3}
In this section, we will give the proof of Theorem~\ref{clt}. Recall
the definition of the Stieltjes transform of a distribution function $G(x)$.
Now we extend the Stieltjes transform to the whole complex plane except
the interval $[c_l,c_r]$ analytically.
Since every $f_k(x)$ is analytic on an open region containing the
interval $[c_l,c_r]$, we assume that the analytic region contains the
contour $\cC=\{z\in\mathbb{C}\dvtx \Re z\in[c_l-\th,c_r+\th],\Im z=\pm
\th\}\cup\{z\in\mathbb{C}\dvtx \Re z\in\{c_l-\th,c_r+\th\},\Im
z\in[- \th,\th]\}$. Here $\th$ can be small enough. By Cauchy's
integral formula
\begin{eqnarray*}
f_k(x)=\frac{1}{2\uppi \mathrm{i}}\oint_\cC\frac{f_k(z)}{z-x}\,
\mathrm{d}z,
\end{eqnarray*}
we have for $l\geq1$ and complex constants $a_1,\ldots,a_l$,
%
\begin{eqnarray}\label{cauchy}
\sum_{k=1}^l a_kp \biggl(
\int f_k(x)\,\mathrm{d}F^{\B_n}(x)-\int f_k(x)\,
\mathrm{d}F_{0}(x) \biggr) =-\sum_{k=1}^l
\frac{a_k}{2\uppi  \mathrm{i}}\oint _{\mathcal{C}} f_k(z) S_n(z)\,
\mathrm{d}z,
\end{eqnarray}
where $S_n(z)=p(s_n(z)-s_0(z))$ and $s_0(z)$ is the Stieltjes transform
of $F$ with constants $y$ and $Y$ replaced by
$y_n=p/n$ and $Y_n=p/N$. We remind the readers to notice that the above
equality may not be correct when some eigenvalues of $\B_n$ fall
outside the contour. However, by Remark~\ref{remark1.2}, Lemma~\ref
{le5.7} and the exact separation theorem in \cite{BaiS98N}, we know
for $y>1\ (\mbox{or }Y>1)$
and sufficiently large $n\ (\mbox{or }N)$, the mass at the origin (one)
of $F^{\B_n}$ will coincide exactly with that of $F_0$ and with
overwhelming probability all the other eigenvalues of $\B_n$ fall in
$[c_l-\th,c_r+\th]$. Thus to prove Theorem~\ref{clt}, it suffices
for us to derive the limiting distribution of \eqref{cauchy}.

Write
\begin{eqnarray*}
S_n(z)=p\bigl(s_n(z)-s_{N0}(z)\bigr)+p
\bigl(s_{N0}(z)-s_0(z)\bigr):=S_{n1}+S_{n2},
\end{eqnarray*}
where $s_{N0}(z)$ is the unique root of the equation
\begin{eqnarray*}
s_{N0}=\int\frac{(1-y_n(1-z)(z s_{N0}(z)+1))+\alpha_n
t}{(1-z)(1-y_n(1-z)(z s_{N0}+1))-{ \alpha_n z}{t}}\,\mathrm{d}F^{\T_N}(t)
\end{eqnarray*}
in the set $\{s_{N0}(z)\in\bbC^+\}$. Using the notation\vspace*{2pt}
$\dot s_{N0}=\dot s_{N0}(z)=\frac{\alpha_n}{(\alpha_n+z)^2}
s_{N0}(\frac{z}{\alpha_n+z})-\frac{1}{\alpha_n+z}$, $\dot
s_{0}=\dot s_{0}(z)=\frac{\alpha_n}{(\alpha_n+z)^2} s_{0}(\frac
{z}{\alpha_n+z})-\frac{1}{\alpha_n+z}$, $\ddot
s_{N0}(z)=-z^{-1}(1-y_n)+y_n\dot s_{N0}(z)$ and\vspace{2pt} $\ddot
s_{0}(z)=-z^{-1}(1-y_n)+y_n\dot s_{0}(z)$ we have
\begin{eqnarray*}
z=-\frac{1}{\ddot s_{N0}}+y_n\int\frac{\mathrm{d}F^{\T_N}(t)}{ t+\ddot
s_{N0}}\quad \mbox{and}\quad  z=-
\frac{1}{\ddot s_{0}}+y_n\int\frac{\mathrm{d}F_{\mathrm{mp}}^{Y_N}(t)}{ t+\ddot s_{0}}.
\end{eqnarray*}
Making difference of the two identities above yields that
\begin{eqnarray*}
\frac{\ddot s_{0}-\ddot s_{N0}}{\ddot s_{0}\ddot s_{N0}}=y_n\int\frac
{(\ddot s_{0}-\ddot s_{N0})\,\mathrm{d}F^{\T_N}(t)}{( t+\ddot s_{N0})( t+\ddot
s_{0})}+y_n\int
\frac{\mathrm{d}F^{\T_N}(t)-\mathrm{d}F_{\mathrm{mp}}^{Y_N}(t)}{ t+\ddot s_{0}}.
\end{eqnarray*}
Then we get
%
\begin{eqnarray}\label{phil1}
{\ddot s_{0}-\ddot s_{N0}} {}=y_n\ddot
s_{0}\ddot s_{N0}\int\frac
{\mathrm{d}F^{\T_N}(t)-\mathrm{d}F_{\mathrm{mp}}^{Y_N}(t)}{ t+\ddot s_{0}}
\biggl(1-y_n\ddot s_{0}\ddot s_{N0}\int
\frac{\mathrm{d}F^{\T_N}(t)}{( t+\ddot s_{N0})( t+\ddot
s_{0})}\biggr)^{-1}.
\end{eqnarray}
Let $s^{\T_N}$ be the Stieltjes transforms of $F^{\T_N}$ and then
from (6.32) in \cite{Zheng12C} we have the conclusion that
\[
p\int\frac{\mathrm{d}F^{\T_N}(t)-\mathrm{d}F_{\mathrm{mp}}^{Y_N}(t)}{ t+\ddot s_{0}}=p\bigl(s^{\T
_N}(-\ddot s_{0})-s^{Y_N}_{\mathrm{mp}}(-
\ddot s_{0})\bigr)
\]
converges weakly to a Gaussian process $\Phi_1$ on $\cC$ with mean function
%
\begin{eqnarray}\label{phi1m}
\E\Phi_1(z)=\frt\frac{Y[\dddot s(z)]^3[1+\dddot s(z)]^{-3}}{\{
1-Y\dddot s(z)/[1+\dddot s(z)]^2\}^2} +(\frm_\mathbbl{x}-
\frt-2)\frac{Y[\dddot s(z)]^3[1+\dddot
s(z)]^{-3}}{1-Y[\dddot s(z)]^2/[1+\dddot s(z)]^{2}}
\end{eqnarray}
and covariance function
%
\begin{eqnarray}\label{phi1c}
\cov\bigl(\Phi_1(z_1),\Phi_1(z_2)
\bigr)&=&(\frt+1) \biggl(\frac{(\dddot
s(z_1))'(\dddot s(z_2))'}{[\dddot s(z_1)-\dddot s(z_2)]^2}-\frac
{1}{(\ddot s(z_1)-\ddot s(z_2))^2} \biggr)
\nonumber
\\[-8pt]\\[-8pt]
&&{}+(\frm_\mathbbl{x}-\frt-2)\frac{Y(\dddot s(z_1))'(\dddot
s(z_2))'}{[1+\dddot s(z_1)]^{2}[1+\dddot s(z_2)]^{2}},\nonumber
\end{eqnarray}
where $\dddot s(z)=\ddot s_{\mathrm{mp}}^Y(-\ddot s(z))$, $\ddot
s_{\mathrm{mp}}^Y(z)=-z^{-1}(1-Y)+Ys_{\mathrm{mp}}^Y(z)$ and $(\dddot s(z_i))'=\frac
{\mathrm{d}}{\mathrm{d}z}\ddot s_{\mathrm{mp}}^Y(z)|_{z=-\ddot s(z_i)}$, $i=1,2$.
And $\{S_{n2}(\cdot)\}$ forms
a tight sequence on $\cC$ and $S_{n2}(\frac{z}{\alpha+z})$ converges
weakly to a Gaussian process $-(\alpha+z)^2\ddot s'(z)\Phi_1(z)$ with
mean function
\begin{eqnarray*}
\E\bigl(-(1+z)^2\ddot s'(z)\Phi_1(z)
\bigr)=-(\alpha+z)^2\ddot s'(z)\cdot \eqref{phi1m}
\end{eqnarray*}
and covariance function
\begin{eqnarray*}
&&\cov\bigl(-(\alpha+z_1)^2\ddot s'(z_1)
\Phi_1(z_1),-(\alpha+z_2)^2\ddot
s'(z_2)\Phi_1(z_2)\bigr)
\\
&&\quad =(\alpha+z_1)^2(\alpha+z_2)^2
\ddot s'(z_1)\ddot s'(z_2)
\cdot\eqref{phi1c}.
\end{eqnarray*}

Recall the notation $\vp=1-y(1-z)(z s+1)$ and
suppose we have the following lemma.
%
\begin{lemma}\label{leclt}
Under the conditions of Theorem~\ref{lsd} and $z\in\cC$, we have
that given $\mathfrak{T}_N=\{\mbox{all } \T_N\}$, $\{S_{n1}(\cdot)\}
$ forms
a tight sequence on $\cC$ and $S_{n1}(z)$ converges weakly to a
two-dimensional Gaussian process $\Phi_2(z)$ satisfying
%
\begin{eqnarray}\label{phi3}
&&\E\bigl(\Phi_2(z)|\mathfrak{T}_N\bigr)\nonumber\\
&&\quad = \frt
\frac{\int\sklfrac{\alpha y
(1-z)\vp^3 t}{ ((1-z)\vp-z\alpha t
)^3}\,\mathrm{d}F_{\mathrm{mp}}^{Y}(t)}{ (1-y\int\sklfrac{(1-z)^2\vp^2 }{
((1-z)\vp-z\alpha t )^2}\,\mathrm{d}F_{\mathrm{mp}}^{Y}(t) )^2}
\nonumber\\[-8pt]\\[-8pt]
&&\qquad {}+(\frm_x-\frt-2) (1-z)y\vp^3\nonumber \\
&&\quad \qquad \,{}{}\times\frac{\int\sklfrac{1}{(1-z)\vp-z\alpha
t}\,\mathrm{d}F_{\mathrm{mp}}^{Y}(t)\int\sklfrac{\alpha t}{ ((1-z)\vp-z\alpha t
)^2}\,\mathrm{d}F_{\mathrm{mp}}^{Y}(t)}{1-y\int\sklfrac{(1-z)^2\vp^2 }{ ((1-z)\vp
-z\alpha t )^2}\,\mathrm{d}F_{\mathrm{mp}}^{Y}(t)}\nonumber
\end{eqnarray}
and
%
\begin{eqnarray}\label{phi4}
&&\cov\bigl(\Phi_2(z_1),\Phi_2(z_2)|
\mathfrak{T}_N\bigr)
\\
&&\quad =\frac{\partial^2}{\partial z_1\,\partial z_2} \biggl((\frt+1)\nonumber\\
&&\hphantom{\quad =\frac{\partial^2}{\partial z_1\,\partial z_2} \biggl(}{}\times\int
\biggl[
\biggl(\int\frac{y(1-z_1)(1-z_2)\vp(z_1)\vp(z_2)}{ ((1-z_1)\vp
-z_1\alpha t ) ((1-z_2)\vp-z_2\alpha t
)}\,\mathrm{d}F_{\mathrm{mp}}^{Y}(t)
\biggr)\nonumber\\
&&\hphantom{\quad =\frac{\partial^2}{\partial z_1\,\partial z_2} \biggl(\times\int
\biggl[}{}\times \biggl(1-t\int\frac{y(1-z_1)(1-z_2)\vp(z_1)\vp
(z_2)}{ ((1-z_1)\vp-z_1\alpha t ) ((1-z_2)\vp
-z_2\alpha t )}\,\mathrm{d}F_{\mathrm{mp}}^{Y}(t)\biggr)^{-1}\,\mathrm{d}t\biggr]
\nonumber\\
&&\hphantom{\quad =\frac{\partial^2}{\partial z_1\,\partial z_2} \biggl(}{}+(\frm_x-\frt-2)y\nonumber\\[-8pt]\\[-8pt]
&&\hphantom{\quad =\frac{\partial^2}{\partial z_1\,\partial z_2} \biggl( + (}{}\times\int\frac{(1-z_1)\vp(z_1)}{(1-z_1)\vp
-z_1\alpha t}\,
\mathrm{d}F_{\mathrm{mp}}^{Y}(t)\int\frac{(1-z_2)\vp(z_2)}{(1-z_2)\vp
-z_2\alpha t}\,
\mathrm{d}F_{\mathrm{mp}}^{Y}(t) \biggr).\nonumber
\end{eqnarray}

\end{lemma}
We postpone the proof of this lemma to the next section. Now we use the
notation $\dot s=\dot s(z)=\frac{\alpha}{(\alpha+z)^2} s(\frac
{z}{\alpha+z})-\frac{1}{\alpha+z}$ and $\ddot
s(z)=-z^{-1}(1-y)+y\dot s(z)$ to get
%
\begin{eqnarray}\label{vpddot}
\vp(z)=-\frac{\alpha z}{1-z}\ddot s\biggl(\frac{\alpha z}{1-z}\biggr),
\end{eqnarray}
which can be used to rewrite \eqref{phi3} and \eqref{phi4} as
%
\begin{eqnarray}\label{phi21}
&&\E\biggl(\Phi_2\biggl(\frac{z}{\alpha+z}\biggr)\Bigl|
\mathfrak{T}_N\biggr)\nonumber\\[-8pt]\\[-8pt]
&&\quad \to\frt\frac
{y(\alpha+z)^2\int\alpha t (\ddot s(z) )^3 (\ddot
s(z)+ t )^{-3}\,\mathrm{d}F_{\mathrm{mp}}^{Y}(t)}{ (1-y\int (\ddot
s(z) )^2 (\ddot s(z)+ t )^{-2}\,\mathrm{d}F_{\mathrm{mp}}^{Y}(t)
)^2}
\nonumber\\
\label{phi5}&&\qquad {}+(\frm_x-\frt-2)\nonumber\\[-8pt]\\[-8pt]
&&\quad \qquad {}\times \frac{y(\alpha+z)^2\int\sklafrac{ \ddot s(z)}{\ddot
s(z)+ t}\,\mathrm{d}F_{\mathrm{mp}}^{Y}(t)\int\sfrac{\alpha t (\ddot s(z)
)^2}{ (\ddot s(z)+ t )^{2}}\,\mathrm{d}F_{\mathrm{mp}}^{Y}(t)}{1-y\int
(\ddot s(z) )^2 (\ddot s(z)+ t
)^{-2}\,\mathrm{d}F_{\mathrm{mp}}^{Y}(t)}\nonumber
\end{eqnarray}
and
%
\begin{eqnarray}\label{phi6}
&&\cov\biggl(\Phi_2\biggl(\frac{z_1}{\alpha+z_1}\biggr),
\Phi_2\biggl(\frac{z_2}{\alpha
+z_2}\biggr)\Bigl|\mathfrak{T}_N
\biggr)
\nonumber
\\
&&\quad \to(\frt+1) (\alpha+z_1)^2(\alpha+z_2)^2
\biggl(\frac
{\ddot s'(z_1)\ddot s'(z_2)}{(\ddot s(z_1)-\ddot s(z_2))^2}-\frac
{1}{(z_1-z_2)^2} \biggr)\nonumber
\\[-8pt]\\[-8pt]
&&\qquad {}+(\frm_x-\frt-2)y(\alpha+z_1)^2(
\alpha+z_2)^2\nonumber\\
&&\qquad \quad \,{}{}\times\int\frac{\alpha
t\ddot s'(z_1)}{ (\ddot s(z_1)+ t )^{2}}\,
\mathrm{d}F_{\mathrm{mp}}^{Y}(t)\int \frac{\alpha t\ddot s'(z_2)}{ (\ddot s(z)+ t
)^{2}}\,
\mathrm{d}F_{\mathrm{mp}}^{Y}(t).\nonumber
\end{eqnarray}
Here we used the fact that (similar to \eqref{phil1})\vspace*{-1pt}
\begin{eqnarray*}
z_1-z_2=\frac{\ddot s(z_1)-\ddot s(z_2)}{\ddot s(z_1)\ddot
s(z_2)} \biggl(1-y\int\ddot
s(z_1)\ddot s(z_2) \bigl(\ddot s(z_1)+ t
\bigr)^{-1} \bigl(\ddot s(z_2)+ t \bigr)^{-1}\,
\mathrm{d}F_{\mathrm{mp}}^{Y}(t) \biggr).
\end{eqnarray*}

As the mean and covariance of the limiting distribution are independent
of the conditioning $\mathfrak{T}_N$, we conclude that $S_{n1}$ and
$S_{n2}$ are asymptotically independent. Then from the above argument
and page 473 in \cite{Zheng12C} we can get that $S_n(\frac{z}{1+z})$
converges weakly to a Gaussian process $-(1+z)^2\ddot s'(z)\Phi
_1(z)+\Phi_2(\frac{z}{1+z})$ and
together with \eqref{cauchy} and Lemma~\ref{limit} implies Theorem~\ref{clt}.

\section{Proof of Lemma \texorpdfstring{\protect\ref{leclt}}{3.1}}\label{sec4}
In this section, we give the proof of Lemma~\ref{leclt}. Following the
similar truncation steps in \cite{BaiS04C} we may truncate and
renormalize the
random variables $\{x_{ij}\}$ as follows:\vspace*{-1.5pt}
\begin{eqnarray*}
|x_{ij}|\leq\delta_n\sqrt{n}, \qquad \E x_{ij}=0 \quad \mbox{and}\quad  \E|x_{ij}|^2=1.
\end{eqnarray*}
Here $\delta_n\to0$ which can be arbitrarily slow. Based on this
truncation, we can verify that:\vspace*{-1.5pt}
%
\begin{eqnarray}
\E| x_{ij}|^4=\frm_x+\mathrm{o}(1),
\end{eqnarray}
and if $\X_n$ is complex valued,\vspace*{-1pt}
\begin{eqnarray*}
\E x_{ij}^2=\mathrm{O}\bigl(n^{-1}\bigr).
\end{eqnarray*}
%
We will introduce some notation and provide some bounds in the first
part of this section. The proof of Lemma~\ref{leclt} will be given in
the next part. The main procedures of the proofs, including the
Stieltjes transform, the martingale decomposition and Burkholder's
inequality, are routine in RMT, hence we will outline them without
detailed descriptions. Interested readers are referred to Bai and
Silverstein \cite{BaiS10S}. Throughout the rest of the paper, constants
appearing in inequalities are represented by $C$ which are nonrandom
and may take different
values from one appearance to another.
\subsection{Definitions and some basic results}

In this part,
we introduce some notation and some useful results. First, we assume
$z=u+\mathrm{i}\th$ with $\th>0$. For simplicity, write $\S=\S_n$ and $\B
=\B_n$.
Let $\D=\D(z)=\B-z\I$, $\F=\F(z)=({1-z})\S-{z\alpha_n}\T_N$
and $\I$ be the identity matrix. Define $\r_i=n^{-1/2}\X_{(\cdot
i)}$ where $\X_{(\cdot i)}$ is the $i$th column of $\X_n$, $\S_i=\S
-\r_i\r_i^*$, $\B_i=\S_i(\S_i+\alpha_n\T_N)^{-1}$, $\D_i=\D
_i(z)=\B_i-z\I$ and $\F_i=\F_i(z)=({1-z})\S_i-{z\alpha_n}\T_N$. Let
$\E_i=\E(\cdot|\mathfrak{T}_N,\r_1,\dots,\r_i)$ and $\E_0=\E
(\cdot|\mathfrak{T}_N)$. Moreover, introduce\vspace*{-1pt}
\begin{eqnarray*}
\vp_i&=&\vp_i(z)=\frac{1}{1+({1-z})\r_i^*\F_i^{-1}(z)\r_i}, \qquad \vp
_i^{\mathrm{tr}}=\varpi^{\mathrm{tr}}_i(z)=
\frac{1}{1+n^{-1}({1-z})\tr\F_i^{-1}(z)},
\\[-1pt]
\vp_i^{\E}&=&\varpi^{\E}_i(z)=
\frac{1}{1+n^{-1}({1-z})\E_0 \tr\F
_i^{-1}(z)},
\\
\gamma_i&=&\ga_i(z)=\r_i^*
\F_i^{-1}\r_i-n^{-1}
\E_0 \tr\F_i^{-1},\qquad  \eta_i=
\eta_i(z)=\r_i^*\F_i^{-1}
\r_i-n^{-1} \tr\F_i^{-1},
\\
\xi_i&=&\xi_i(z)=n^{-1} \tr
\F_i^{-1}-n^{-1} \E_0 \tr
\F_i^{-1},
\\
s_n&=&s_n(z)=s_{F^{\mathbf{B}_n}}(z),\qquad  s=s(z)=s_{F^{y,H}}(z),\qquad
s_0=s_0(z)=s_{F^{y_n,H_n}}(z).
\end{eqnarray*}
Obviously we have,
%
\begin{eqnarray}
\label{gai}\ga_i(z)&=&\eta_i(z)+\xi_i(z),
\\
\label{evpi}\vp_i&=&\varpi^{\E}_i-(1-z)
\varpi^{\E}_i\vp_i\ga_i=
\varpi^{\E
}_i-(1-z) \bigl(\varpi^{\E}_i
\bigr)^2\ga_i+(1-z)^2\bigl(
\varpi^{\E}_i\bigr)^2\vp_i\ga
_i^2
\end{eqnarray}
and
%
\begin{eqnarray}
\vp_i=\varpi^{\mathrm{tr}}_i-(1-z)
\varpi^{\mathrm{tr}}_i\vp_i\eta_i=\varpi
^{\mathrm{tr}}_i-(1-z) \bigl(\varpi^{\mathrm{tr}}_i
\bigr)^2\eta_i+(1-z)^2\bigl(
\varpi^{\mathrm{tr}}_i\bigr)^2\vp _i
\eta_i^2\label{trvpi}.
\end{eqnarray}

It is easy to verify that
%
\begin{eqnarray}
\Im(1-z)^{-1}={\th}|1-z|^{-2}\label{im}
\end{eqnarray}
and
%
\begin{eqnarray}
\Im\r_i^*\F_i^{-1}(z)\r_i =\th
\r_i^*\F_i^{-1}(z) (\S_i+
\alpha_n\T_N )^{}\F _i^{-1}(
\bar{z})\r_i\label{im1}
\end{eqnarray}
have the same sign.
Therefore from the definition of $\vp_i$, we have
%
\begin{eqnarray}\label{bovpi}
|\vp_i|=\biggl\llvert \frac{1}{1-z}\frac{1}{\afrac{1}{1-z}+\r_i^*\F
_i^{-1}(z)\r_i}\biggr
\rrvert \leq\frac{|1-z|}{\th}.
\end{eqnarray}
Similarly we can obtain
%
\begin{eqnarray}\label{boetrvpi}
\bigl|\vp_i^{\mathrm{tr}}\bigr|\leq\frac{|1-z|}{\th}, \qquad \bigl|
\vp_i^{\E}\bigr|\leq\frac
{|1-z|}{\th}.
\end{eqnarray}
By the fact that
%
\begin{eqnarray}
\bigl\|\F_i^{-1}(z)\bigr\|=\bigl\|\D_i^{-1}(z) (
\S_i+\alpha_n\T_N)^{-1}\bigr\|\leq C\th
^{-1}\label{fi}
\end{eqnarray}
and Lemma~\ref{leby}, we have for any $l\geq2$
\begin{equation}\label{eetail}
\E\bigl|\eta_i(z)\bigr|^l\leq\frac{C\delta_n^{2l-4}}{n\th^l}.
\end{equation}
In the last inequality we used $|x_{ij}|\leq\delta_n\sqrt{n}$.
For any invertible matrices $\M$, $\M+\r_i\r_i^*$ and $\N$, using
%
\begin{eqnarray}\label{mn}
\r_i^*\bigl(\M+ \r_i\r_i^*
\bigr)^{-1}=\frac{1}{1+ \r_i^*\M\r_i}\r_i^*\M ^{-1},\qquad
\M^{-1}-\N^{-1}=-\N^{-1}(\M-\N)\M^{-1},
\end{eqnarray}
we obtain that\vspace*{-1pt}
%
\begin{eqnarray}\label{ffi}
\F^{-1}(z)-\F_i^{-1}(z)=-(1-z)
\varpi_i\F_i^{-1}\r_{i}
\r_{i}^*\F _i^{-1},
\end{eqnarray}
which together with (\ref{im1})--\eqref{fi} implies that for any
Hermitian matrix $\M$ with $\|\M\|\leq C$,\vspace*{-1pt}
%
\begin{eqnarray}\label{ddi}
\bigl|\tr\F^{-1}(z)\M-\tr\F_i^{-1}(z)\M\bigr|=\bigl|(1-z)
\varpi_i\r_{i}^*\F _i^{-1}\M
\F_i^{-1}\r_{i}\bigr|\leq C\th^{-1}.
\end{eqnarray}
%
%
\begin{lemma}\label{efef}
Under the conditions of Theorem~\ref{clt}, we have for any non-random
Hermitian matrix $\M$ with $\|\M\|\leq C$ and $l\geq2$,\vspace*{-1pt}
\begin{eqnarray*}
\E\bigl|n^{-1}\tr\F^{-1}(z)\M-n^{-1}\E_0
\tr\F^{-1}(z)\M\bigr|^l\leq \frac{C_l\delta_n^{2l-4}}{n^{l/2+1}\th^{3l}}, \qquad \mbox{where }
z=u+\mathrm{i}\theta.
\end{eqnarray*}
\end{lemma}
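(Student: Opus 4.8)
The plan is to run the routine martingale-difference / Burkholder argument, conditioning on $\mathfrak{T}_N$ throughout so that the only randomness is in the columns $\r_1,\dots,\r_n$ of $\X_n$. With $\E_i=\E(\cdot\mid\mathfrak{T}_N,\r_1,\dots,\r_i)$, and using that $\F_i$ is independent of $\r_i$ (so $(\E_i-\E_{i-1}){\rm tr}\,\F_i^{-1}\M=0$), the resolvent identity \eqref{ffi} gives
\[
n^{-1}{\rm tr}\,\F^{-1}\M-n^{-1}\E_0{\rm tr}\,\F^{-1}\M=\sum_{i=1}^nY_i,\qquad Y_i:=-\frac{1-z}{n}(\E_i-\E_{i-1})\big(\varpi_i\,\r_i^*\F_i^{-1}\M\F_i^{-1}\r_i\big).
\]
Since $\{Y_i\}$ is a martingale difference sequence for the filtration generated by the $\r_i$'s, Burkholder's inequality reduces the problem to bounding
\[
\E\Big(\sum_i\E_{i-1}|Y_i|^2\Big)^{l/2}\quad\text{and}\quad\sum_i\E|Y_i|^l .
\]

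The key is to strip off from each increment the part that is measurable with respect to $(\mathfrak{T}_N,\r_1,\dots,\r_{i-1})$ and hence annihilated by $\E_i-\E_{i-1}$. Expanding $\varpi_i$ via \eqref{trvpi} as $\varpi_i^{tr}$ plus an $\eta_i$-correction, and writing $\r_i^*\F_i^{-1}\M\F_i^{-1}\r_i=n^{-1}{\rm tr}\,\F_i^{-1}\M\F_i^{-1}+\zeta_i$ with $\zeta_i:=\r_i^*\F_i^{-1}\M\F_i^{-1}\r_i-n^{-1}{\rm tr}\,\F_i^{-1}\M\F_i^{-1}$, one gets
\begin{align*}
\varpi_i\,\r_i^*\F_i^{-1}\M\F_i^{-1}\r_i
&=\varpi_i^{tr}\,n^{-1}{\rm tr}\,\F_i^{-1}\M\F_i^{-1}+\varpi_i^{tr}\zeta_i\\
&\quad-(1-z)\varpi_i^{tr}\varpi_i\eta_i\big(n^{-1}{\rm tr}\,\F_i^{-1}\M\F_i^{-1}+\zeta_i\big)+(\text{terms carrying }\eta_i^2),
\end{align*}
whose first summand is predictable and drops out of $Y_i$. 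Thus every surviving term contains a centered factor ($\zeta_i$ or $\eta_i$), which supplies the extra $n^{-1/2}$ in $L^l$ (equivalently $n^{-1}$ in conditional second moment). I would then substitute the bounds already at hand: $|\varpi_i|,|\varpi_i^{tr}|\le|1-z|/\th$ from \eqref{bovpi}, \eqref{boetrvpi}; $\|\F_i^{-1}\|\le C\th^{-1}$ from \eqref{fi}, hence $\|\F_i^{-1}\M\F_i^{-1}\|\le C\th^{-2}$ and $|n^{-1}{\rm tr}\,\F_i^{-1}\M\F_i^{-1}|\le C\th^{-2}$; $\E|\eta_i|^l\le C\de_n^{2l-4}n^{-1}\th^{-l}$ from \eqref{eetail}; and $\E|\zeta_i|^l\le C_l(n^{-l/2}+\de_n^{2l-4}n^{-1})\th^{-2l}$ from the quadratic-form moment inequality (Lemma \ref{leby}) applied to $\F_i^{-1}\M\F_i^{-1}$, the $\de_n^{2l-4}$ coming from the $2l$-th moment term $\E|x_{11}|^{2l}\le C\de_n^{2l-4}n^{l-2}$. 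The cross term $\eta_i\zeta_i$ is handled by Cauchy--Schwarz together with these bounds and crude moment bounds for $\|\r_i\|$. Feeding all of this into Burkholder — each of the $n$ terms of $\sum_i\E_{i-1}|Y_i|^2$ is $O(n^{-3})$ and each $\E|Y_i|^l$ is $O(n^{-3l/2})$, up to the above powers of $\de_n$ and $\th^{-1}$ — bounds the $l$th moment in question by $O(n^{-l})+O(n^{1-3l/2})$, which for $l\ge2$ is at most $C_l\de_n^{2l-4}n^{-l/2-1}\th^{-3l}$ (using that $\de_n$ may be taken to decay more slowly than any power of $n$), the asserted estimate.

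The only real work is the bookkeeping: verifying that the $(\mathfrak{T}_N,\r_1,\dots,\r_{i-1})$-measurable leading part of each increment truly cancels under $\E_i-\E_{i-1}$, that no leftover term forfeits a power of $n^{-1/2}$, and — most delicately — that the powers of $\th^{-1}$ accumulated while expanding $\varpi_i$ and the quadratic form stay within $\th^{-3l}$. All the ingredients (Burkholder's inequality, Cauchy--Schwarz, and the bounds \eqref{bovpi}, \eqref{boetrvpi}, \eqref{fi}, \eqref{eetail}, \eqref{ffi}, \eqref{trvpi}) are standard and have direct analogues in \cite{BaiS10S} and \cite{Zheng12C}.
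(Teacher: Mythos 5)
Your proposal follows essentially the same route as the paper: martingale decomposition over the columns $\r_i$, the rank-one resolvent identity \eqref{ffi}, the expansion of $\varpi_i$ via \eqref{trvpi} so that the predictable part $\varpi_i^{tr}\,n^{-1}{\rm tr}\,\F_i^{-1}\M\F_i^{-1}$ is annihilated by $\E_i-\E_{i-1}$, the quadratic-form moment bound of Lemma \ref{leby} together with \eqref{eetail}, \eqref{bovpi}--\eqref{fi}, and finally Burkholder's inequality. The only cosmetic difference is that the paper bounds the $\eta_i$-correction term directly via \eqref{ddi} rather than re-centering it, but the argument and the resulting estimate are the same.
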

\begin{pf}
The martingale decomposition (one can refer to \cite{BaiS10S} for more
details) gives\vspace*{-1.5pt}
\begin{eqnarray*}
\tr\F^{-1}\M-\E_0 \tr\F^{-1}\M&=&\sum
_{i=1}^n(\E_i-\E_{i-1})\tr
\bigl(\F^{-1}\M-\F_i^{-1}\M\bigr)
\\[-1pt]
&=&- (1-z) \sum_{i=1}^n(\E_i-
\E_{i-1})\varpi_i\r_{i}^*\F_i^{-1}
\M \F_i^{-1}\r_{i}
\\[-1pt]
&=& (z-1) \sum_{i=1}^n(\E_i-
\E_{i-1})\varpi^{\mathrm{tr}}_i\r_{i}^*\F
_i^{-1}\M\F_i^{-1}
\r_{i}\\[-1pt]
&&{}+(1-z)^2 \sum_{i=1}^n(
\E_i-\E _{i-1})\varpi^{\mathrm{tr}}_i
\vp_i\eta_i \r_{i}^*\F_i^{-1}
\M\F_i^{-1}\r_{i}.
\end{eqnarray*}
Here we used \eqref{ffi} and \eqref{trvpi}. From \eqref{fi} and
Lemma~\ref{leby}, we obtain that\vspace*{-1pt}
\begin{eqnarray*}
\E\bigl|\r_{i}^*\F_i^{-1}\M
\F_i^{-1}\r_{i}-n^{-1}\tr
\F_i^{-1}\M\F _i^{-1}\bigr|^l
\leq\frac{C\delta_n^{2l-4}}{n\th^{2l}}.
\end{eqnarray*}
Thus it follows from \eqref{boetrvpi} and Lemma~\ref{bhi} that\vspace*{-1pt}
\begin{eqnarray*}
&&\E\Biggl\llvert \sum_{i=1}^n(
\E_i-\E_{i-1})\varpi^{\mathrm{tr}}_i
\r_{i}^*\F _i^{-1}\M\F_i^{-1}
\r_{i}\Biggr\rrvert ^l
\\[-1pt]
&&\quad =\E\Biggl\llvert \sum_{i=1}^n(
\E_i-\E_{i-1})\varpi^{\mathrm{tr}}_i \bigl(\r
_{i}^*\F_i^{-1}\M\F_i^{-1}
\r_{i}-n^{-1}\tr\F_i^{-1}\M\F
_i^{-1} \bigr)\Biggr\rrvert ^l \leq
\frac{Cn^{l/2}\delta_n^{2l-4}}{n\th^{3l}}.
\end{eqnarray*}
On the other hand, from \eqref{boetrvpi}, \eqref{ddi}, \eqref
{eetail} and Lemma~\ref{bhi} we also have\vspace*{-1pt}
\begin{eqnarray*}
\E\Biggl\llvert \sum_{i=1}^n(
\E_i-\E_{i-1})\varpi^{\mathrm{tr}}_i
\vp_i\eta_i \r _{i}^*\F_i^{-1}
\M\F_i^{-1}\r_{i}\Biggr\rrvert ^l
\leq\frac{Cn^{l/2}\delta_n^{2l-4}}{n\th^{3l}},
\end{eqnarray*}
which completes the proof.\vadjust{\goodbreak}
\end{pf}
%
\begin{remark}
From the last lemma and \eqref{ddi}, one can easily verify that
for any $l\geq2$,
%
\begin{eqnarray}\label{efiefil}
\E\bigl|\tr\F_i^{-1}(z)\M-\E\tr\F_i^{-1}(z)
\M\bigr|^l\leq\frac
{C_ln^{l/2}\delta_n^{2l-4}}{n\th^{3l}}.
\end{eqnarray}
Furthermore, by combining \eqref{gai}, \eqref{eetail} and \eqref
{efiefil} with $\M=\I$, we have for any $l\geq2$,
%
\begin{eqnarray}\label{egail}
\E|\ga_i|^l\leq \frac{C_l\delta_n^{2l-4}}{n}.
\end{eqnarray}
\end{remark}

Denote $\S_{ij}=\S-\r_{i}\r_{i}^*-\r_j\r_j^*$ for $i\neq j$.
Correspondingly, let $\B_{ij}=\S_{ij}(\S_{ij}+\alpha_n\T_N)^{-1}$,
$\D_{ij}=\D_{ij}(z)=\B_{ij}-z\I$, $\F_{ij}=\F_{ij}(z)=({1-z})\S
_{ij}-{z\alpha}\T_N$ and assume $\|(\S_{ij}+\alpha_n\T_N)^{-1}\|
<\infty$. Moreover, we have
\begin{eqnarray*}
\vp_{ij}&=&\vp_{ij}(z)=\frac{1}{1+({1-z})\r_{j}^*\F_{ij}^{-1}(z)\r
_{j}},\qquad
\vp_{ij}^{\mathrm{tr}}=\varpi^{\mathrm{tr}}_{ij}(z)=
\frac
{1}{1+n^{-1}({1-z})\tr\F_{ij}^{-1}(z)},
\\
\vp_{ij}^{\E}&=&\varpi^{\E}_{ij}(z)=
\frac{1}{1+n^{-1}({1-z})\E_0
\tr\F_{ij}^{-1}(z)},
\\
\gamma_{ij}&=&\ga_{ij}(z)=\r_{j}^*
\F_{ij}^{-1}(z)\r_{j}-n^{-1}
\E_0 \tr\F_{ij}^{-1}(z),\\
 \eta_{ij}&=&
\eta_{ij}(z)=\r_{j}^*\F _{ij}^{-1}(z)
\r_{j}-n^{-1}\tr\F_{ij}^{-1}(z),
\\
\xi_{ij}&=&\xi_{ij}(z)=n^{-1}\tr
\F_{ij}^{-1}(z)-n^{-1} \E_0 \tr\F
_{ij}^{-1}(z).
\end{eqnarray*}
We can get the same bound as we did in \eqref{gai}--\eqref{ddi} by
changing the subscript $i$ to $ij$. Thus from now on when we consider
these bounds we will ignore the subscripts.
Let $\H_{12}=\H_{12}(z)=(1-z)\frac{n-1}{n} \varpi^{\E}_{12}\I
-z\alpha_n\T_N$. We have the following lemma.
%
\begin{lemma}\label{lemma4.3}
Under the conditions of Theorem~\ref{clt} and $z=u+\mathrm{i}\th$, we have for
any $1\leq k\leq p$, $1\leq i\leq n$ and non-random matrix $\M$ with
$\|\M\|\leq C$
%
\begin{eqnarray}\label{fiii}
\E_0\mathbf{e}_k^*\F_i^{-1}(z)
\M\mathbf{e}_k=\mathbf{e}_k^*\H_{12}^{-1}(z)
\M \mathbf{e}_k+\mathrm{O}\bigl(n^{-1/2}\bigr),
\end{eqnarray}
where $\mathbf{e}_k$ is the $p$-dimensional vector with the $k$th
coordinate being 1 and the remaining being zero.
\end{lemma}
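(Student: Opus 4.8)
The plan is to prove \eqref{fiii} by expanding $\F_i^{-1}$ around the deterministic matrix $\H_{12}^{-1}$ through the resolvent identity, thereby reducing it to a sum of $n-1$ column contributions each of which vanishes in conditional expectation up to errors that survive summation only at order $n^{-1/2}$. Since $\S_i=\sum_{j\neq i}\r_j\r_j^*$ and, from the definitions of $\F_i$ and $\H_{12}$, $\H_{12}-\F_i=(1-z)\sum_{j\neq i}\big(\tfrac1n\evp_{12}\I-\r_j\r_j^*\big)$, the identity $\F_i^{-1}-\H_{12}^{-1}=\H_{12}^{-1}(\H_{12}-\F_i)\F_i^{-1}$ gives
\[
\e_k^*\big(\F_i^{-1}-\H_{12}^{-1}\big)\M\e_k=(1-z)\sum_{j\neq i}\e_k^*\H_{12}^{-1}\Big(\tfrac1n\evp_{12}\I-\r_j\r_j^*\Big)\F_i^{-1}\M\e_k .
\]
Throughout I use that $\H_{12}$ is invertible with $\|\H_{12}^{-1}\|\leq C\th^{-1}$, a fact to be checked separately from the sign of $\Im\big[(1-z)\evp_{12}\big]$ against $-z\a_n\T_N$ (in the spirit of \eqref{bovpi}--\eqref{fi}); also $\|\F_i^{-1}\|,\|\F_{ij}^{-1}\|\leq C\th^{-1}$ by \eqref{fi}.

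Fix $j\neq i$. By the Sherman--Morrison identity \eqref{mn}, $\r_j\r_j^*\F_i^{-1}=\vp_{ij}\r_j\r_j^*\F_{ij}^{-1}$; by \eqref{ffi} with $i$ replaced by $ij$, $\F_i^{-1}=\F_{ij}^{-1}-(1-z)\vp_{ij}\F_{ij}^{-1}\r_j\r_j^*\F_{ij}^{-1}$; and by the $ij$-version of \eqref{evpi}, with $\evp_{ij}=\evp_{12}$ by exchangeability of $\r_1,\dots,\r_n$ given $\mathfrak{T}_N$, $\vp_{ij}=\evp_{12}-(1-z)\evp_{12}\vp_{ij}\ga_{ij}$. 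Substituting these three identities rewrites the $j$-th summand as
\begin{align*}
\e_k^*\H_{12}^{-1}\Big(\tfrac1n\evp_{12}\I-\r_j\r_j^*\Big)\F_i^{-1}\M\e_k
&=-\evp_{12}\,\e_k^*\H_{12}^{-1}\Big(\r_j\r_j^*-\tfrac1n\I\Big)\F_{ij}^{-1}\M\e_k\\
&\quad+(1-z)\evp_{12}\vp_{ij}\ga_{ij}\,\big(\e_k^*\H_{12}^{-1}\r_j\big)\big(\r_j^*\F_{ij}^{-1}\M\e_k\big)\\
&\quad-\frac{(1-z)\evp_{12}}{n}\vp_{ij}\,\big(\e_k^*\H_{12}^{-1}\F_{ij}^{-1}\r_j\big)\big(\r_j^*\F_{ij}^{-1}\M\e_k\big).
\end{align*}
The decisive point is that the first term has vanishing $\E_0$-expectation, because $\F_{ij}$ is independent of $\r_j$ given $\mathfrak{T}_N$ and $\E\r_j\r_j^*=\tfrac1n\I$. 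Hence, after taking $\E_0$ and summing over $j\neq i$, proving \eqref{fiii} reduces to estimating the $\E_0$-expectations of the remaining two terms, summed over the $n-1$ values of $j$.

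For the third term, Cauchy--Schwarz with $|\vp_{ij}|,|\evp_{12}|\leq|1-z|\th^{-1}$ (from \eqref{bovpi}--\eqref{boetrvpi}), $\E_0|\e_k^*\H_{12}^{-1}\F_{ij}^{-1}\r_j|^2\leq Cn^{-1}\th^{-4}$ and $\E_0|\r_j^*\F_{ij}^{-1}\M\e_k|^2\leq Cn^{-1}\th^{-2}$ (both via $\E\r_j\r_j^*=\tfrac1n\I$ after conditioning on $\F_{ij}$) gives $O(n^{-2})$ per $j$, hence $O(n^{-1})$ after summation. The main obstacle is the middle term: a crude absolute bound gives only $O(n^{-1/2})$ per $j$, i.e. a useless $O(n^{1/2})$ after summing. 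The remedy is to exploit the \emph{three} small factors $\ga_{ij}$, $\e_k^*\H_{12}^{-1}\r_j$, $\r_j^*\F_{ij}^{-1}\M\e_k$ simultaneously through the H\"older splitting
\[
\E_0\big|\ga_{ij}\,(\e_k^*\H_{12}^{-1}\r_j)\,(\r_j^*\F_{ij}^{-1}\M\e_k)\big|\leq\|\ga_{ij}\|_2\,\|\e_k^*\H_{12}^{-1}\r_j\|_4\,\|\r_j^*\F_{ij}^{-1}\M\e_k\|_4 ,
\]
where $\|\ga_{ij}\|_2^2=\E_0|\ga_{ij}|^2\leq Cn^{-1}$ by \eqref{egail} (which, like \eqref{gai}--\eqref{ddi}, holds with $i$ replaced by $ij$), while $\|\e_k^*\H_{12}^{-1}\r_j\|_4^4$ and $\|\r_j^*\F_{ij}^{-1}\M\e_k\|_4^4$ are $O(n^{-2})$ by a Rosenthal/Burkholder estimate (Lemma \ref{leby}) using the uniformly bounded fourth moments of the entries (and, for the second factor, conditioning on $\F_{ij}$ first). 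Each factor being $O(n^{-1/2})$, the middle term is $O(n^{-3/2})$ per $j$, so its sum over $j\neq i$ is $O(n^{-1/2})$; together with the $O(n^{-1})$ above this yields $\E_0\e_k^*\F_i^{-1}\M\e_k=\e_k^*\H_{12}^{-1}\M\e_k+O(n^{-1/2})$, uniformly in $1\leq k\leq p$. The only genuinely delicate input is beating the $n$-fold accumulation in the middle term down to order $n^{-1/2}$, which is precisely where the finite-fourth-moment hypothesis and the conditioning on $\F_{ij}$ are indispensable; a secondary, routine point is establishing the invertibility and $\th^{-1}$-bound of $\H_{12}^{-1}$ before running the expansion.
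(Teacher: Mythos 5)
Your proof is correct and follows essentially the same route as the paper: your resolvent expansion of $\F_i^{-1}-\H_{12}^{-1}$ via Sherman--Morrison produces exactly the three terms $H_{(1)},H_{(2)},H_{(3)}$ of the paper's identity \eqref{fhhhh}, with the same mean-zero term (your first term equals $-H_{(2)}$, killed by $\E\r_j\r_j^*=\tfrac1n\I$ and the independence of $\r_j$ from $\F_{ij}$) and the same orders $O(n^{-1})$ and $O(n^{-1/2})$ for the other two, obtained by the same H\"older/Cauchy--Schwarz combination of $\|\ga_{ij}\|_2=O(n^{-1/2})$ with fourth-moment bounds on the quadratic forms. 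The only cosmetic discrepancy is the power of $\th$ in the bound on $\|\H_{12}^{-1}\|$ (the paper's \eqref{h12} gives $C\th^{-3}$ rather than $C\th^{-1}$), which is immaterial since $\th$ is fixed.
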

\begin{pf}
Using
\eqref{mn}, we can check that
%
\begin{eqnarray}\label{fhhhh}
\hspace*{-10pt}\F_i^{-1}(z)&=&\H_{12}^{-1}(z) +
\frac{\varpi^{\E}_{12}(1-z)}{n }\sum_{j\neq i}\H _{12}^{-1}(z)
\bigl(\F_i^{-1}(z)-\F_{ij}^{-1}(z)
\bigr)
\nonumber
\\
\hspace*{-10pt}&&{}+\frac{\varpi^{\E}_{12}(1-z)}{n }\sum_{j\neq i}\H_{12}^{-1}(z)
\F _{ij}^{-1}(z) -(1-z)\sum_{j\neq i}
\vp_{ij}\H_{12}^{-1}(z)\r_j
\r_j^*\F _{ij}^{-1}(z)
\\
\hspace*{-10pt}&=&\H_{12}^{-1}(z) +H_{(1)}-H_{(2)}
-H_{(3)},\nonumber
\end{eqnarray}
where
\begin{eqnarray*}
H_{(1)}&=&\frac{\varpi^{\E}_{12}(1-z)}{n }\sum_{j\neq i}\H
_{12}^{-1}(z) \bigl(\F_i^{-1}(z)-
\F_{ij}^{-1}(z) \bigr),
\\
H_{(2)}&=&(1-z)\varpi^{\E}_{12}\sum
_{j\neq i} \bigl(\H _{12}^{-1}(z)
\r_j\r_j^*\F_{ij}^{-1}(z)
-n^{-1}\H_{12}^{-1}(z)\F_{ij}^{-1}(z)
\bigr),
\\
H_{(3)}&=&(1-z)\sum_{j\neq i}\bigl(
\vp_{ij}-\varpi^{\E}_{12}\bigr)\H
_{12}^{-1}(z)\r_j\r_j^*
\F_{ij}^{-1}(z).
\end{eqnarray*}
Note that, similar to \eqref{im}, either the real parts or the
imaginary parts of $(1-z)\varpi^{\E}_{12}$ and $-z $ have the same sign.
Thus, we have for any $t\geq0$
\begin{eqnarray}\label{bohij}
\biggl|(1-z)\frac{n-1}{n} \varpi^{\E}_{12}-z
\alpha_n t\biggr|^{-1}\leq\frac
{C}{\th^3},
\end{eqnarray}
which implies
%
\begin{eqnarray}\label{h12}
\bigl\llVert \H_{12}^{-1}(z)\bigr\rrVert \leq
\frac{C}{\th^3}.
\end{eqnarray}

Then it follows from \eqref{fi}, \eqref{h12} and Lemma~\ref{leby} that
\begin{eqnarray*}
&&\E_0 \bigl|\mathbf{e}_k^*\H_{12}^{-1}(z)
\r_j\r_j^*\F_{ij}^{-1}(z)\M
\mathbf{e}_k\bigr|^2
\\
&&\quad \leq Cn^{-2}\mathbf{e}_k^*\H_{12}^{-1}(z)
\H_{12}^{-1}(\bar z)\mathbf{e}_k\E
_0\mathbf{e}_k^*\M\F_{ij}^{-1}(z)
\F_{ij}^{-1}(\bar z)\M^*\mathbf{e}_k\\
&&\qquad{}+n^{-2}
\E_0\bigl|\mathbf{e}_k^*\H_{12}^{-1}(z)
\F_{ij}^{-1}(z)\M\mathbf{e}_k\bigr|^2.
\end{eqnarray*}
From \eqref{h12}, we have
%
\begin{eqnarray}\label{ekhek}
\bigl|\mathbf{e}_k^*\H_{12}^{-1}(z)
\mathbf{e}_k\bigr|\quad  \mbox{and}\quad  \mathbf{e}_k^*\H
_{12}^{-1}(z)\H_{12}^{-1}(\bar z)
\mathbf{e}_k
\end{eqnarray}
are both bounded from above. In addition, by \eqref{fi} we get that
%
\begin{eqnarray}\label{ffij}
\mathbf{e}_k^*\M\F_{ij}^{-1}(z)
\F_{ij}^{-1}(\bar z)\M\mathbf{e}_k\leq C \bigl\|
\F_{ij}^{-1}(z)\bigr\|^2\leq C\th^{-2}
\end{eqnarray}
and
%
\begin{eqnarray}\label{fijhij}
\bigl|\mathbf{e}_k^*\H_{12}^{-1}(z)
\F_{ij}^{-1}(z)\M\mathbf{e}_k\bigr|\leq C\th
^{-4}.
\end{eqnarray}
Thus combining \eqref{evpi}, \eqref{boetrvpi}, \eqref{egail}, \eqref
{ffi}, \eqref{ffij},
\eqref{fijhij} and H\"{o}lder's inequalitywe obtain
\begin{eqnarray*}
\E_0|H_{(1)}|=\mathrm{O}\bigl(n^{-1}\bigr)
\quad \mbox{and}\quad  \E_0|H_{(3)}|= \mathrm{O}\bigl(n^{-1/2}
\bigr) .
\end{eqnarray*}
Apparently we have $\E_0 H_{(2)}=0$. Thus the proof of the lemma is complete.
\end{pf}
%
\begin{lemma}\label{lemma4.4}
Under the conditions of Theorem~\ref{clt} and $z=u+\mathrm{i}\th$, we have for
any $1\leq k\leq p$, $1\leq j\leq n$ and non-random matrix $\M$ with
$\|\M\|\leq C$
\begin{eqnarray*}
\E\bigl|\mathbf{e}_k^*\F^{-1}(z)\M\mathbf{e}_k-
\E_0 \mathbf{e}_k^* \F^{-1}(z)\M
\mathbf{e}_k\bigr|^2=\mathrm{O}\bigl(n^{-1}\bigr)
\end{eqnarray*}
and
\begin{eqnarray*}
\E\bigl|\mathbf{e}_k^*\F_j^{-1}(z)\M
\mathbf{e}_k-\E_0 \mathbf{e}_k^* \F
_j^{-1}(z)\M\mathbf{e}_k\bigr|^2=
\mathrm{O}\bigl(n^{-1}\bigr).
\end{eqnarray*}
\end{lemma}
\begin{pf}
Similarly to the proof of Lemma~\ref{efef} and Lemma~\ref{lemma4.3},
we can easily get this lemma and we omit details.
\end{pf}
%
\begin{lemma}\label{lem7}
For any non-random matrix $\M$ with $\|\M\|\le C$ and $z_1=u_1+\mathrm{i}\th
_1,z_2=u_2+\mathrm{i}\th_2$ with $\min\{\th_1,\th_2\}>0$, we have
\begin{eqnarray*}
\E \biggl|\frac{1}{n}\tr\M\F_i^{-1}(z_1)
\E_i\bigl(\F_i^{-1}(z_2)\bigr)-
\E_0 \biggl(\frac{1}{n}\tr\M\F_i^{-1}(z_1)
\E_i\bigl(\F_i^{-1}(z_2)\bigr)
\biggr)\biggr |^2=\mathrm{O}\bigl(n^{-2}\bigr).
\end{eqnarray*}
\end{lemma}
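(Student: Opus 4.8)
The plan is to use the standard martingale–difference argument behind Lemma~\ref{efef}: decompose in the columns $\r_k$, invoke orthogonality of martingale differences (the case $l=2$ of Lemma~\ref{bhi}), and control each increment via the bound \eqref{eetail} for $\eta_{ik}$ together with the quadratic–form concentration Lemma~\ref{leby}. Write $W$ for the quantity in the statement, $W=\frac1n{\rm tr}\,\M\F_i^{-1}(z_1)\E_i(\F_i^{-1}(z_2))$; since $W$ carries the prefactor $\frac1n$ it suffices to prove $\E|nW-\E_0(nW)|^2=O(1)$. For $1\le k\le n$ with $k\ne i$ put $\cG_k=\sigma(\mathfrak T_N;\r_j:1\le j\le k,\ j\ne i)$, $\E_{[k]}(\cdot)=\E(\cdot\mid\cG_k)$ and $\E_{[0]}=\E_0$, so that $nW-\E_0(nW)=\sum_{k\ne i}(\E_{[k]}-\E_{[k-1]})(nW)$ and, by orthogonality, $\E|nW-\E_0(nW)|^2=\sum_{k\ne i}\E|(\E_{[k]}-\E_{[k-1]})(nW)|^2$. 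It then remains to prove $\E|(\E_{[k]}-\E_{[k-1]})(nW)|^2=O(n^{-1})$ uniformly in $k$.

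For a single increment I would introduce $W^{(k)}$, the quantity obtained from $W$ by replacing $\S_i$ everywhere by $\S_{ik}$ (so each $\F_i^{-1}(z_\ell)$ becomes $\F_{ik}^{-1}(z_\ell)$; when $k<i$ the inner expectation $\E_i$ then only averages $\{\r_j:j>i\}$, which is unaffected by conditioning on $\r_k$). Then $nW^{(k)}$ is free of $\r_k$, hence $(\E_{[k]}-\E_{[k-1]})(nW^{(k)})=0$ and, more generally, $(\E_{[k]}-\E_{[k-1]})(nW)=(\E_{[k]}-\E_{[k-1]})(nW-nW^{(k)}-R_k)$ for any $\r_k$-free $R_k$; the task is thus to split $nW-nW^{(k)}$ into an $\r_k$-free piece (to serve as $R_k$) plus a remainder of $L^2$-norm $O(n^{-1/2})$. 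By \eqref{mn}--\eqref{ffi}, $\F_i^{-1}(z_\ell)-\F_{ik}^{-1}(z_\ell)=-(1-z_\ell)\vp_{ik}(z_\ell)\F_{ik}^{-1}(z_\ell)\r_k\r_k^*\F_{ik}^{-1}(z_\ell)$. When $k>i$, $\E_i(\F_i^{-1}(z_2))$ is $\r_k$-free, so $nW-nW^{(k)}=-(1-z_1)\vp_{ik}(z_1)\,\r_k^*\mathbf G_k\r_k$ with $\mathbf G_k$ $\r_k$-free and $\|\mathbf G_k\|\le C$; replacing $\vp_{ik}(z_1)$ by $\trvp_{ik}(z_1)$ via \eqref{trvpi} and $\r_k^*\mathbf G_k\r_k$ by $n^{-1}{\rm tr}\,\mathbf G_k$ leaves the $\r_k$-free term $-(1-z_1)\trvp_{ik}(z_1)n^{-1}{\rm tr}\,\mathbf G_k$ plus two remainders of $L^2$-norm $O(n^{-1/2})$, by \eqref{eetail} and Lemma~\ref{leby} respectively. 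This settles $k>i$.

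The case $k<i$ is the main obstacle. Here $\cG_k\subset\sigma(\mathfrak T_N;\r_1,\dots,\r_i)$, so by the tower property and the $\sigma(\mathfrak T_N;\r_1,\dots,\r_i)$-measurability of $\E_i(\F_i^{-1}(z_2))$ one may move $\E_i$ through ${\rm tr}(\M\,\cdot\,)$ and replace $nW$, when computing $\E_{[k]}$ and $\E_{[k-1]}$, by the product of smoothed resolvents $\widetilde W={\rm tr}\big(\M\,\E_i(\F_i^{-1}(z_1))\,\E_i(\F_i^{-1}(z_2))\big)$. Expanding \emph{both} factors by \eqref{ffi} gives $\widetilde W-\widetilde W^{(k)}=T_1+T_2+T_3$ ($\widetilde W^{(k)}$ being the analogue with $\S_i$ replaced by $\S_{ik}$). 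In $T_1$ and $T_2$ exactly one resolvent is perturbed; moving the $\sigma(\mathfrak T_N;\r_1,\dots,\r_i)$-measurable factors back inside $\E_i$ turns each into $-(1-z_\ell)\E_i[\vp_{ik}(z_\ell)\,\r_k^*(\cdot)\r_k]$ with a bounded, $\r_k$-free kernel, treated exactly as when $k>i$. In $T_3$ both resolvents carry the rank-one correction; writing the product of the two separate $\E_i$-averages by means of two independent copies of $\{\r_j:j>i\}$ expresses $T_3$ as an $\E_i\otimes\E_i$-expectation of $\vp_{ik}(z_1)\vp_{ik}(z_2)\,(\r_k^*\,\cdot\,\r_k)(\r_k^*\,\cdot\,\r_k)$ with bounded $\r_k$-free kernels, and replacing each $\vp_{ik}$ by $\trvp_{ik}$ and each quadratic form by its normalised trace isolates a fully $\r_k$-free part (absorbed into $R_k$) with all the replacement errors of $L^2$-norm $O(n^{-1/2})$, again by \eqref{eetail} and Lemma~\ref{leby}. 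Hence $\E|(\E_{[k]}-\E_{[k-1]})(nW)|^2=O(n^{-1})$ for every $k\ne i$, and summing the at most $n$ increments gives $\E|nW-\E_0(nW)|^2=O(1)$, i.e.\ the assertion. The one genuinely delicate point is $T_3$: its crude operator-norm bound is only $O(1)$, so the $O(n^{-1})$ increment really does hinge on stripping off the $\r_k$-free part before applying $\E_{[k]}-\E_{[k-1]}$; the remaining estimates are of the routine kind used in Lemmas~\ref{efef} and \ref{lemma4.3}--\ref{lemma4.4}.
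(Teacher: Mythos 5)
Your proof is correct and follows essentially the same route as the paper's: a martingale decomposition in the column index $k$, rank-one resolvent perturbation via \eqref{ffi} and \eqref{trvpi}, quadratic-form concentration via Lemma~\ref{leby} and \eqref{eetail}, and--for $k<i$--the independent-copy decoupling trick to linearize the product of two $\E_i$-conditional resolvents, which is exactly what the paper does with its $\underline{\F}_{ij},\underline{\vp}_{ij}$ construction. Your tower-property replacement of $\F_i^{-1}(z_1)$ by $\E_i\F_i^{-1}(z_1)$ when $k<i$ is a tidy reorganization that symmetrizes the two cases, but the underlying estimates and the ``strip off the $\r_k$-free leading part, bound the remainder in $L^2$'' mechanism are the same as in the paper.
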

%
\begin{remark}
Checking the proof of Lemma~\ref{lem7}, we see that Lemma~\ref{lem7} holds as well when we replace $\E_i(\F_i^{-1}(z_2))$ by
$\F_i^{-1}(z_2)$. The main difference in the arguments is that we do
not distinguish between the cases $j<i$ and $j>i$ when
dealing with the latter.
\end{remark}
\begin{pf*}{Proof of Lemma~\ref{lem7}} Using the martingale decomposition, we have
\begin{eqnarray*}
&&\frac{1}{n}\tr\M\F_i^{-1}(z_1)
\E_i\bigl(\F_i^{-1}(z_2)\bigr)-
\E_0 \biggl(\frac{1}{n}\tr\M\F_i^{-1}(z_1)
\E_i\bigl(\F_i^{-1}(z_2)\bigr)
\biggr)
\\
&&\quad =\frac{1}{n}\sum_{j\neq
i}^{n}(
\E_j-\E_{j-1}) \bigl[\tr\M\F_i^{-1}(z_1)
\E_i\bigl(\F_i^{-1}(z_2)\bigr)+
\tr\M\F_{ij}^{-1}(z_1) \E_i\bigl(
\F_{ij}^{-1}(z_2)\bigr) \bigr]
\\
&&\quad =\frac{1}{n}\sum_{j\neq
i}^{n}(
\E_j-\E_{j-1}) (\mathcal{K}_1+
\cK_2+\cK_3),
\end{eqnarray*}
where (via (\ref{ffi}))
\begin{eqnarray*}
\cK_1&=&\vp_{ij}(z_1)\r_j^*
\F_{ij}^{-1}(z_1) \E_i \bigl(
\vp_{ij}(z_2)\F_{ij}^{-1}(z_2)
\r_j\r_j^*\F _{ij}^{-1}(z_2)
\bigr)\M\F_{ij}^{-1}(z_1)\r_j,
\\
\cK_2&=&-\vp_{ij}(z_1)\r_j^*
\F_{ij}^{-1}(z_1) \E_i \bigl(
\F_{ij}^{-1}(z_2) \bigr)\M\F_{ij}^{-1}(z_1)
\r_j,
\\
\cK_3&=&-\tr\M\F_{ij}^{-1}(z_1)
\E_i \bigl(\vp_{ij}(z_2)\F_{ij}^{-1}(z_2)
\r_j\r_j^*\F _{ij}^{-1}(z_2)
\bigr).
\end{eqnarray*}
Note that by \eqref{ddi}
%
\begin{equation}
\label{f7} |\vp_{ij}|\bigl\|\r_j^*\F_{ij}^{-1}(z)
\bigr\|^2=\bigl|\vp_{ij}\r_j^*\F _{ij}^{-1}(z)
\F_{ij}^{-1}(\bar z)\r_j\bigr|\leq C,
\end{equation}
which implies that $\cK_1$ is bounded.

When $j>i$, applying \eqref{evpi} to get
\[
(\E_j-\E_{j-1})\cK_1=(\E_j-
\E_{j-1})\varpi^{\E}_{12}(z_1) (\cK
_{11}-\cK_{12}),
\]
where $\cK_{12}=\ga_{kj}(z_1)\cK_1$ and
\begin{eqnarray*}
\cK_{11}&=&\r_j^*\F_{ij}^{-1}(z_1)
\E_i \bigl(\vp_{ij}(z_2)\G_k(z_2)
\bigr)\M\F_{ij}^{-1}(z_1)\r_j
\\
&&{}-n^{-1}\tr\F_{ij}^{-1}(z_1)
\E_i \bigl(\vp_{ij}(z_2)\G_{ij}(z_2)
\bigr)\M\F_{ij}^{-1}(z_1)
\end{eqnarray*}
with
$\G_{ij}(z_2)=\F_{ij}^{-1}(z_2)\r_j\r_j^*\F_{ij}^{-1}(z_2)$. We
conclude from (\ref{f7}), (\ref{boetrvpi}), (\ref{egail}), Lemmas
\ref{leby}, \ref{bhi} and $\|\M\|\leq C$ that
\[
\E\Biggl|\frac{1}{n}\sum_{j>
i}^{n}(
\E_j-\E_{j-1}) (\cK_{11}-\cK_{12})\Biggr|^2
\leq \frac{M}{n^2}\sum_{j>
i}^{n}
\bigl(\E|\cK_{11}|^2+\E|\cK_{12}|^2
\bigr)\leq\frac{C}{n^2}.
\]

On the other hand, when $j<i$, we define
$\underline{\F}_{ij}^{-1}(z),\underline{\vp}_{ij}(z)$ and
$\underline{\ga}_{ij}(z)$ using
$\r_1, \ldots, \r_{j-1},\allowbreak  \underline{\r}_{j+1}, \ldots,
\underline{\r}_{i-1}, \underline{\r}_{i+1}, \ldots,
\underline{\r}_n$
as $\F_{ij}^{-1}(z),\vp_{ij}(z)$ and $\ga_{kj}(z)$ are defined
using
$\r_1, \ldots, \r_{j-1},\allowbreak  \r_{j+1}, \ldots, \r_{i-1}, \r
_{i+1},\ldots, \r_n$.
Here $\underline{\r}_{1},\dots,\underline{\r}_{n}$ are i.i.d.
copies of $\r_1$ and independent of $\{\r_j,j=1,\dots,n\}$. Let
\[
\mathcal{R}_{ij1}(z_1,z_2)=
\r_j^*\F_{ij}^{-1}(z_1) \underline{
\F}_{ij}^{-1}(z_2)\r_j,\qquad
\cR_{ij2}(z_1,z_2)=\r_j^*
\underline{\F}_{ij}^{-1}(z_2)\M
\F_{ij}^{-1}(z_1) \r_j.
\]
Applying
the equality for $\underline{\vp}_{kj}(z_2)$ similar to
(\ref{evpi}) yields
\begin{eqnarray*}
(\E_j-\E_{j-1})\cK_1&=&(\E_j-
\E_{j-1}) \bigl[\vp_{ij}(z_1)\underline {
\vp}_{ij}(z_2)\cR_{ij1}(z_1,z_2)
\cR_{ij2}(z_1,z_2) \bigr]
\\
&=&(\E_j-\E_{j-1}) (\cK_{13}+
\cK_{14}-\cK_{15}-\cK_{16}),
\end{eqnarray*}
where
\begin{eqnarray*}
\cK_{13}&=&\vp_{ij}(z_1)\underline{
\vp}_{ij}(z_2)\cT _{ij1}(z_1,z_2)
\cR_{ij2}(z_1,z_2),
\\
\cK_{14}&=&\vp_{ij}(z_1)\underline{
\vp}_{ij}(z_2)n^{-1}\tr\F_{ij}^{-1}(z_1)
\underline{\F}_{ij}^{-1}(z_2)
\cT_{ij2}(z_1,z_2),
\\
\cK_{15}&=&\varpi^{\E}_{12}(z_1)
\varpi^{\E}_{12}(z_2)\underline{\vp
}_{ij}(z_2)\underline{\ga}_{ij}(z_2)n^{-2}
\tr\F_{ij}^{-1}(z_1) \underline{
\F}_{ij}^{-1}(z_2)\tr\underline{
\F}_{ij}^{-1}(z_2)\M\F _{ij}^{-1}(z_1)
,
\\
\cK_{16}&=&\varpi^{\E}_{12}(z_1)
\vp_{ij}(z_1)\ga_{ij}(z_1)\underline
{\vp}_{ij}(z_2)n^{-2}\tr\F_{ij}^{-1}(z_1)
\underline{\F}_{ij}^{-1}(z_2)\tr\underline{
\F}_{ij}^{-1}(z_2)\M\F _{ij}^{-1}(z_1)
\end{eqnarray*}
with
\begin{eqnarray*}
\cT_{ij1}(z_1,z_2)&=&\cR_{i1}-n^{-1}
\tr\F_{ij}^{-1}(z_1) \underline{
\F}_{ij}^{-1}(z_2) ,\\
 \cT_{ij2}(z_1,z_2)&=&
\cR_{i2}-n^{-1}\tr\underline{\F}_{ij}^{-1}(z_2)
\M \F_{ij}^{-1}(z_1) .
\end{eqnarray*}
Apparently $\underline{\F}_{ij}^{-1}(z),\underline{\vp}_{ij}(z)$
and $\underline{\ga}_{ij}(z)$ have the same bound as $\F
_{ij}^{-1}(z),{\vp}_{ij}(z)$ and
${\ga}_{ij}(z)$, respectively. Thus
it follows from Lemma~\ref{leby},
(\ref{fi}) and (\ref{f7}) that
%
\begin{equation}
\label{f16} \E\bigl|\cT_{ij1}(z_1,z_2)\bigr|^2
\leq\frac{C}{n},\qquad  \E\bigl|\cT_{ij2}(z_1,z_2)\bigr|^2
\leq\frac{C}{n}
\end{equation}
and
%
\begin{eqnarray}\label{nfij2}
n^{-1}\bigl|\tr\F_{ij}^{-1}(z_1)
\underline{\F}_{ij}^{-1}(z_1)\bigr|<C,\qquad
n^{-1}\bigl|\tr\underline{\F }_{ij}^{-1}(z_2)
\M\F_{ij}^{-1}(z_1)\bigr|<C.
\end{eqnarray}
Therefore combining \eqref{egail}, \eqref{bovpi}, \eqref{boetrvpi},
(\ref{f16}) and (\ref{nfij2}), we can obtain that for $t=3,4,5,6$,
\begin{eqnarray*}
\E\bigl|(\E_j-\E_{j-1})\cK_{1t}\bigr|^2=
\mathrm{O}\bigl(n^{-1}\bigr).
\end{eqnarray*}
This via Lemma~\ref{bhi} implies that
\begin{eqnarray*}
\E\Biggl|\frac{1}{n}\sum_{j<
i}^{n}(
\E_j-\E_{j-1})\mathcal{K}_1\Biggr|^2=
\mathrm{O}\bigl(n^{-2}\bigr).
\end{eqnarray*}

The terms $\cK_2$ and $\cK_3$ can be similarly
proved to have the same order. Then the proof of Lemma~\ref{lem7} is complete.
\end{pf*}

Now we use \eqref{fhhhh} to write that
%
\begin{eqnarray}
&&\frac{1}{n}\tr\M\F_i^{-1}(z_1)
\E_i\F_i^{-1}(z_2)\nonumber\\
&&\quad =
\frac{1}{n}\tr\M \H_{12}^{-1}(z_1)
\E_i\F_i^{-1}(z_2)+
\frac{1}{n}\tr\M H_{(1)}(z_1)\E_i
\F_i^{-1}(z_2)
\\
&&\qquad {}-\frac{1}{n}\tr\M H_{(2)}(z_1)\E_i
\F_i^{-1}(z_2)-\frac{1}{n}\tr\M
H_{(3)}(z_1)\E_i\F_i^{-1}(z_2).\nonumber
\end{eqnarray}
Then we have the following lemmas.
%
\begin{lemma}\label{lemma4.7}
For any non-random matrix $\M$ with $\|\M\|\le C$ and $z_1=u_1+\mathrm{i}\th
_1,z_2=u_2+\mathrm{i}\th_2$ with $\min\{\th_1,\th_2\}>0$, we have
%
\begin{eqnarray}\label{le4.71}
\bigl|\E_0\tr\M H_{(1)}(z_1) \E_i
\bigl(\F_i^{-1}(z_2)\bigr)\bigr|=
\mathrm{O}_p(1)
\end{eqnarray}
and
%
\begin{eqnarray}\label{le4.72}
\bigl|\E_0\tr\M H_{(3)}(z_1) \E_i
\bigl(\F_i^{-1}(z_2)\bigr)\bigr|=
\mathrm{O}_p(1).
\end{eqnarray}
\end{lemma}
\begin{pf}
By \eqref{ffi}, we obtain that
\begin{eqnarray*}
&&\tr\M H_{(1)}(z_1) \E_i\bigl(
\F_i^{-1}(z_2)\bigr)
\\
&&\quad =\frac{\varpi^{\E}_{12}(z_1)(1-z_1)}{n }\sum_{j\neq i}\vp
_{ij}(z_1)\r_j^*\F_{ij}^{-1}(z_1)
\E_i\bigl(\F_i^{-1}(z_2)\bigr)\M\H
_{12}^{-1}(z_1)\F_{ij}^{-1}(z_1)
\r_j.
\end{eqnarray*}
As $\H_{12}^{-1}(z)$ $\F_i^{-1}(z)$, $\F_{ij}^{-1}(z)$, $\varpi^{\E
}_{12}(z)$ and $\vp_{12}(z)$ are all bounded when $\Im z>0 $, we can
get directly that for $j>i$,
\begin{eqnarray*}
\bigl|\E_0\tr\M H_{(1)}(z_1) \E_i
\bigl(\F_i^{-1}(z_2)\bigr)\bigr|\leq C.
\end{eqnarray*}
When $j<i$, note that we also have
\begin{eqnarray*}
\bigl|\E_0\tr\M H_{(1)}(z_1) \E_i
\bigl(\F_{ij}^{-1}(z_2)\bigr)\bigr|\leq C.
\end{eqnarray*}
Then from \eqref{ffi}, $\E|x_{ij}|<\infty$ and the definition of
$\underline{\F}_{ij}^{-1}(z),\underline{\vp}_{ij}(z)$ and
$\underline{\ga}_{ij}(z)$ in Lemma~\ref{lem7} we have
\begin{eqnarray*}
&&\bigl|\E_0 \vp_{ij}(z_1)\r_j^*
\F_{ij}^{-1}(z_1)\E_i\bigl(
\F_i^{-1}(z_2)-\F _{ij}^{-1}(z_2)
\bigr)\M\H_{12}^{-1}(z_1)\F_{ij}^{-1}(z_1)
\r_j\bigr|
\\
&&\quad =\bigl|\E_0 \vp_{ij}(z_1)\underline{
\vp}_{ij}(z_2)\r_j^*\F _{ij}^{-1}(z_1)
\underline{\F}_{ij}^{-1}(z_2)\r_j
\r_j^*\underline {\F}_{ij}^{-1}(z_2)
\M\H_{12}^{-1}(z_1)\F_{ij}^{-1}(z_1)
\r_j\bigr|=\mathrm{O}(1),
\end{eqnarray*}
which completes the proof of \eqref{le4.71}.

Now consider \eqref{le4.72}. When $j<i$, using \eqref{evpi} we
rewrite the left-hand side of \eqref{le4.72} as
%
\begin{eqnarray}\label{le4.73}
&&\Biggl|(1-z_1) \varpi^{\E}_{12}(z_1)
\E_0 \sum_{j\neq i}\vp_{ij}(z_1)
\ga _{ij}(z_1)\r_j^*\F_{ij}^{-1}(z_1)
\E_i\bigl(\F_i^{-1}(z_2)\bigr)\M
\H_{12}^{-1}(z_1)\r_j\Biggr|
\nonumber
\\[-8pt]\\[-8pt]
&&\quad = \Biggl|(1-z_1) \varpi^{\E}_{12}(z_1)
\E_0 \sum_{j\neq i}\vp _{ij}(z_1)
\ga_{ij}(z_1)\cT_{ij3}(z_1,z_2)\nonumber
\\
\label{le4.74}&&\hphantom{\quad = |}{}+(1-z_1) \varpi^{\E}_{12}(z_1)
\E_0 n^{-1}\nonumber\\[-8pt]\\[-8pt]
&&\hphantom{|=|+(}\times\sum_{j\neq i}\vp
_{ij}(z_1)\ga_{ij}(z_1)\tr
\F_{ij}^{-1}(z_1) \E_i\bigl(
\F_i^{-1}(z_2)\bigr)\M\H_{12}^{-1}(z_1)
\Biggr|,\nonumber
\end{eqnarray}
where
\[
\cT_{ij3}(z_1,z_2)=\r_j^*
\F_{ij}^{-1}(z_1) \E_i\bigl(
\F_i^{-1}(z_2)\bigr)\M\H_{12}^{-1}(z_1)
\r_j-n^{-1}\tr\F_{ij}^{-1}(z_1)
\E_i\bigl(\F_i^{-1}(z_2)\bigr)\M
\H_{12}^{-1}(z_1).
\]
From Lemma~\ref{leby}, we have $\E|\cT_{ij3}(z_1,z_2)|^2=\mathrm{O}(n^{-1})$
which together with \eqref{egail} and H\"{o}lder's inequalityimplies
\begin{eqnarray*}
\eqref{le4.73}=\mathrm{O}(1).
\end{eqnarray*}
For \eqref{le4.74}, we apply \eqref{evpi} again and obtain that
\begin{eqnarray*}
\bigl|\eqref{le4.74}\bigr|=\Biggl| \bigl((1-z_1) \varpi^{\E}_{12}(z_1)
\bigr)^2\E _0 n^{-1}\sum
_{j\neq i}\vp_{ij}(z_1)
\ga_{ij}^2(z_1)\tr\F_{ij}^{-1}(z_1)
\E_i\bigl(\F_i^{-1}(z_2)\bigr)\M
\H_{12}^{-1}(z_1)\Biggr|.
\end{eqnarray*}
Here we have used the fact that $|n^{-1}\tr\F_{ij}^{-1}(z_1)
\E_i(\F_i^{-1}(z_2))\M\H_{12}^{-1}(z_1)|$ is bounded. Thus from
\eqref{egail}, we get that
\begin{eqnarray*}
\eqref{le4.74}=\mathrm{O}(1).
\end{eqnarray*}

On the other hand, when $j>i$, the above argument apparently also works
if we replace $\E_i(\F_i^{-1}(z_2))$ with $\E_i(\F
_{ij}^{-1}(z_2))$. And the remaining term can be expressed as
\begin{eqnarray}\label
{le4.7x}
&&(1-z_1) \varpi^{\E}_{12}(z_1)\E
\sum_{j\neq i}\vp _{ij}(z_1)
\underline\vp_{ij}(z_2)\ga_{ij}(z_1)
\r_j^*\F_{ij}^{-1}(z_1) \underline{
\F}_{ij}^{-1}(z_2)\r_j
\r_j^*\underline{\F }_{ij}^{-1}(z_2)
\M\H_{12}^{-1}(z_1)\r_j
\nonumber\\
&&\quad =(1-z_1) \varpi^{\E}_{12}(z_1)\E
\sum_{j\neq i}\vp _{ij}(z_1)
\underline\vp_{ij}(z_2)\ga_{ij}(z_1)
\cT_{ij1}\r _j^*\underline{\F}_{ij}^{-1}(z_2)
\M\H_{12}^{-1}(z_1)\r_j
\\
&&\qquad {}+(1-z_1) \varpi^{\E}_{12}(z_1)\E
n^{-1}\sum_{j\neq i}\vp _{ij}(z_1)
\underline\vp_{ij}(z_2)\ga_{ij}(z_1)
\tr\F_{ij}^{-1}(z_1) \underline{
\F}_{ij}^{-1}(z_2)\cT_{ij2}(z_1,z_2)
\nonumber\\
&&\qquad {}-(1-z_1) (1-z_2) \varpi^{\E}_{12}(z_1)
\varpi^{\E}_{12}(z_2)\E n^{-2}\sum
_{j\neq i}\vp_{ij}(z_1)\underline
\vp_{ij}(z_2)\ga _{ij}(z_1)
\ga_{ij}(z_2)
\nonumber\\
&&\hphantom{\qquad {}-(1-z_1) (1-z_2) \varpi^{\E}_{12}(z_1)
\varpi^{\E}_{12}(z_2)\E n^{-2}\sum
_{j\neq i}}{}\cdot\tr\F_{ij}^{-1}(z_1) \underline{
\F}_{ij}^{-1}(z_2)\tr\underline{
\F}_{ij}^{-1}(z_2)\M\H _{12}^{-1}(z_1)
\nonumber\\
\label{le4.7y}
&&\qquad {}-\bigl((1-z_1) \varpi^{\E}_{12}(z_1)
\bigr)^2\E n^{-2}\sum_{j\neq i}\vp
_{ij}(z_1)\underline\vp_{ij}(z_2)
\bigl(\ga_{ij}(z_1) \bigr)^2
\\
&&\hphantom{\qquad {}-\bigl((1-z_1) \varpi^{\E}_{12}(z_1)
\bigr)^2\E n^{-2}\sum_{j\neq i}}{}\cdot\tr\F_{ij}^{-1}(z_1) \underline{
\F}_{ij}^{-1}(z_2)\tr\underline{
\F}_{ij}^{-1}(z_2)\M\H _{12}^{-1}(z_1)
\nonumber
.
\end{eqnarray}
By Lemma~\ref{leby} and a similar argument in \eqref{le4.71}, we can
show that \eqref{le4.7x} and \eqref{le4.7y} are all bounded. Then the
proof of Lemma~\ref{lemma4.7} is complete.
\end{pf}
%
\begin{lemma}
\label{lemma4.8}
For any non-random matrix $\M$ with $\|\M\|\le C$ and $z_1=u_1+\mathrm{i}\th
_1,z_2=u_2+\mathrm{i}\th_2$ with $\min\{\th_1,\th_2\}>0$, we have
\begin{eqnarray*}
&&\E_0 n^{-1}\tr\M H_{(2)}(z_1)
\E_i\bigl(\F_i^{-1}(z_2)\bigr)
\nonumber
\\
&&\quad =-(i-1)n^{-2}(1-z_1) (1-z_2)
\varpi^{\E}_{12}(z_1)\varpi^{\E
}_{12}(z_2)
\\
&&\qquad {}\cdot\tr\H_{12}^{-1}(z_2)\M
\H_{12}^{-1}(z_1)\E_0\tr\F
_{i}^{-1}(z_1)\underline{\F}_{i}^{-1}(z_2)+
\mathrm{O}_p(1).
\nonumber
\end{eqnarray*}
\end{lemma}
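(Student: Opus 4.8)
The plan is to make the $\r_j$-dependence explicit and reduce \eqref{le4.81} to a sum of centered quadratic forms. Using $H_{(2)}(z_1)=(1-z_1)\evp_{12}(z_1)\sum_{j\neq i}\H_{12}^{-1}(z_1)\big(\r_j\r_j^*-n^{-1}\I\big)\F_{ij}^{-1}(z_1)$ and cyclicity of the trace, I would first write
\begin{align*}
\E_0 n^{-1}{\rm tr}\M H_{(2)}(z_1)\E_i(\F_i^{-1}(z_2))=\frac{(1-z_1)\evp_{12}(z_1)}{n}\sum_{j\neq i}\E_0\big(\r_j^*P_j\r_j-n^{-1}{\rm tr}P_j\big),
\end{align*}
where $P_j=\F_{ij}^{-1}(z_1)\E_i(\F_i^{-1}(z_2))\M\H_{12}^{-1}(z_1)$. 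Each summand would vanish under $\E_0$ if $P_j$ were $\r_j$-free; the $\r_j$-dependence lives only in $\E_i(\F_i^{-1}(z_2))$, and \eqref{ffi} displays it, $\F_i^{-1}(z_2)=\F_{ij}^{-1}(z_2)-(1-z_2)\vp_{ij}(z_2)\F_{ij}^{-1}(z_2)\r_j\r_j^*\F_{ij}^{-1}(z_2)$. Since $\E_i\F_{ij}^{-1}(z_2)$ does not involve $\r_j$, the $\F_{ij}^{-1}(z_2)$-part of $P_j$ is $\r_j$-free and drops out of $\E_0(\r_j^*P_j\r_j-n^{-1}{\rm tr}P_j)$, which therefore equals $-(1-z_2)\E_0(\r_j^*Q_j\r_j-n^{-1}{\rm tr}Q_j)$ with $Q_j=\F_{ij}^{-1}(z_1)\E_i\big(\vp_{ij}(z_2)\F_{ij}^{-1}(z_2)\r_j\r_j^*\F_{ij}^{-1}(z_2)\big)\M\H_{12}^{-1}(z_1)$, a quartic expression in $\r_j$.

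The sum then collapses to $\sum_{j<i}$: for $j>i$ the column $\r_j$ is not among the conditioning variables of $\E_i$, so $\E_i(\F_i^{-1}(z_2))$, $\F_{ij}^{-1}$ and $\H_{12}^{-1}$ are all measurable with respect to $\{\r_k:k\leq i\}$, whence $P_j$ (and likewise $Q_j$) is $\r_j$-free and $\E_0(\r_j^*P_j\r_j-n^{-1}{\rm tr}P_j)=0$ exactly. For $j<i$, $\r_j$ is fixed under $\E_i$, and this yields the main term. Here I would replace $\vp_{ij}(z_2)$ by the deterministic $\evp_{12}(z_2)$, the error being controlled through \eqref{evpi}, \eqref{egail} and Lemma \ref{leby}, and replace $\E_i\F_{ij}^{-1}(z_2)$ by its deterministic equivalent $\H_{12}^{-1}(z_2)$ via Lemmas \ref{lemma4.3}, \ref{lemma4.4} and \ref{lem7}, so that $\E_i\big(\vp_{ij}(z_2)\F_{ij}^{-1}(z_2)\r_j\r_j^*\F_{ij}^{-1}(z_2)\big)=\evp_{12}(z_2)\H_{12}^{-1}(z_2)\r_j\r_j^*\H_{12}^{-1}(z_2)+(\text{negligible})$. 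With this, $\r_j^*Q_j\r_j$ becomes $\evp_{12}(z_2)$ times a product of two quadratic forms in $\r_j$ with $\r_j$-free matrices, $\big(\r_j^*\F_{ij}^{-1}(z_1)\H_{12}^{-1}(z_2)\r_j\big)\big(\r_j^*\H_{12}^{-1}(z_2)\M\H_{12}^{-1}(z_1)\r_j\big)$, and the standard fourth-moment identity for quadratic forms extracts the dominant part $n^{-2}\E_0{\rm tr}\big(\F_{ij}^{-1}(z_1)\H_{12}^{-1}(z_2)\big)\,{\rm tr}\big(\H_{12}^{-1}(z_2)\M\H_{12}^{-1}(z_1)\big)$; the single-trace cross term, the $\frt$-- and $(\frm_x-\frt-2)$--corrections, and the leftover $n^{-1}{\rm tr}Q_j$ are each $O(n^{-1})$ per index and so contribute $O_p(1)$ after summation.

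Finally, I would trade $\F_{ij}^{-1}(z_1)$ for $\F_i^{-1}(z_1)$ inside the trace at the cost of an $O_p(1)$ error via \eqref{ddi}, and rewrite $\E_0{\rm tr}\big(\F_i^{-1}(z_1)\H_{12}^{-1}(z_2)\big)=\E_0{\rm tr}\big(\F_i^{-1}(z_1)\underline\F_i^{-1}(z_2)\big)$ using the independent copy $\underline\F_i^{-1}(z_2)$ (again up to $O_p(1)$); the $i-1$ identical contributions from $j<i$ then produce the factor $i-1$, and collecting all prefactors yields the leading term displayed in \eqref{le4.81}. The main obstacle will be the $j<i$ analysis: one must show that the successive replacements (of $\vp_{ij}$, of $\E_i\F_{ij}^{-1}(z_2)$, and of $\F_{ij}^{-1}(z_1)$) together with the subdominant pieces of the quartic form accumulate only to $O_p(1)$ \emph{after} summing over all $i-1$ indices, which forces a careful combination of \eqref{eetail}, \eqref{egail}, Lemma \ref{leby}, Burkholder's inequality (Lemma \ref{bhi}) and Lemmas \ref{lem7}, \ref{lemma4.3} and \ref{lemma4.4}, and one has to be attentive to the fact that $\r_j$ occurs simultaneously inside and outside the conditional expectation $\E_i$.
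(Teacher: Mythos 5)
Your overall skeleton matches the paper's: extract the $j<i$ terms (the $j>i$ terms vanish under $\E_0$ because the middle matrix is then $\r_j$-free), peel the $\r_j$-dependence out of $\E_i(\F_i^{-1}(z_2))$ with \eqref{ffi}, and extract the dominant trace--times--trace contribution from a quartic form. But there is a genuine gap in the middle of your argument, and it is not a minor technicality: it affects the answer at the same order as the main term.

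The problematic claim is that
\begin{align*}
\E_i\big(\vp_{ij}(z_2)\F_{ij}^{-1}(z_2)\r_j\r_j^*\F_{ij}^{-1}(z_2)\big)=\evp_{12}(z_2)\H_{12}^{-1}(z_2)\r_j\r_j^*\H_{12}^{-1}(z_2)+(\text{negligible}).
\end{align*}
Lemma \ref{lemma4.3} controls only the \emph{first moment} of entries, $\E_0\e_k^*\F_i^{-1}(z)\M\e_k\approx\e_k^*\H_{12}^{-1}(z)\M\e_k$. The expression you need is a \emph{second-moment} object: writing $\tilde E=\F_{ij}^{-1}(z_2)-\E_i\F_{ij}^{-1}(z_2)$, the difference contains $\E_i\big[\tilde E\,\r_j\r_j^*\tilde E\big]$, a conditional covariance matrix of the resolvent fluctuations, and nothing you cite bounds it. Its quartic-form contribution against $\F_{ij}^{-1}(z_1)$ and $\M\H_{12}^{-1}(z_1)\r_j\r_j^*$ is generically $O(1)$ per index $j$, hence $O(n)$ over the sum, i.e.\ of the same size as the main term you are trying to extract. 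Likewise, your subsequent ``rewrite $\E_0{\rm tr}\big(\F_i^{-1}(z_1)\H_{12}^{-1}(z_2)\big)=\E_0{\rm tr}\big(\F_i^{-1}(z_1)\underline\F_i^{-1}(z_2)\big)$ up to $O_p(1)$'' is false: $\E_0{\rm tr}\F_i^{-1}(z_1)\H_{12}^{-1}(z_2)$ is within $O(n^{1/2})$ of ${\rm tr}\H_{12}^{-1}(z_1)\H_{12}^{-1}(z_2)$ (by Lemma \ref{lemma4.3}), whereas $\E_0{\rm tr}\F_i^{-1}(z_1)\underline\F_i^{-1}(z_2)$ is the self-consistent quantity of \eqref{good4}, which differs from ${\rm tr}\H_{12}^{-1}(z_1)\H_{12}^{-1}(z_2)$ by the nontrivial renormalization factor $\big(1-\tfrac{(i-1)(\cdots)}{n^2}{\rm tr}\H_1^{-1}(z_1)\H_1^{-1}(z_2)\big)^{-1}\ne 1$, i.e.\ by an $O(n)$ amount. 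Your steps 6 and 9 therefore each perturb the expression at the leading order; you would have to prove that these two $O(n)$ errors cancel, and that cancellation is precisely the content of \eqref{good4}, so the argument is circular.

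The paper handles this differently and the difference is essential, not cosmetic. Rather than approximating both copies of $\F_{ij}^{-1}(z_2)$ by $\H_{12}^{-1}(z_2)$, it replaces the integrated-out columns by the independent copies $\underline\r$ \emph{at the outset}, so that the conditional expectation under $\E_i$ produces a second resolvent $\underline\F_{ij}^{-1}(z_2)$ that is \emph{conditionally independent of $\F_{ij}^{-1}(z_1)$ and of $\r_j$}. The fourth-moment identity then applies with the second factor kept random, and the centered pieces $\cT_{ij1},\cT_{ij4},\underline\ga_{ij}$ are each $O(n^{-1/2})$ and accumulate only to $O_p(1)$. The deterministic approximation $\underline\F_{ij}^{-1}(z_2)\to\H_{12}^{-1}(z_2)$ is made only in the trace against the \emph{deterministic} matrix $\M\H_{12}^{-1}(z_1)$, where \eqref{efiefil}/Lemma \ref{lemma4.3} do apply, while the other trace $\E_0{\rm tr}\F_i^{-1}(z_1)\underline\F_i^{-1}(z_2)$ is deliberately left un-approximated in the statement so that the fixed-point equation can be closed afterwards. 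To fix your proof you would need to abandon the double deterministic replacement, introduce the independent copies before applying the fourth-moment identity (as in Lemma \ref{lem7}), and retain $\E_0{\rm tr}\F_i^{-1}(z_1)\underline\F_i^{-1}(z_2)$ in the final expression.
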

\begin{pf}
It follows from \eqref{ffi}, \eqref{boetrvpi}, \eqref{fi}, \eqref
{h12} and $\E|x_{ij}|<C$ that
%
\begin{eqnarray}\label{le4.81}
&&\E_0\tr\M H_{(2)}(z_1) \E_i
\bigl(\F_i^{-1}(z_2)\bigr)
\nonumber
\\
&&\quad =-(1-z_1) (1-z_2)\varpi^{\E}_{12}(z_1)\nonumber
\\
&&\qquad {}\times\sum_{j< i}\E_0\underline\vp
_{ij}(z_2)\r_j^*\F_{ij}^{-1}(z)
\underline{\F}_{ij}^{-1}(z_2)\r_j\r
_j^*\underline{\F}_{ij}^{-1}(z_2)\M
\H_{12}^{-1}(z_1)\r_j
\\
&&\qquad {}+(1-z_1) (1-z_2)\varpi^{\E}_{12}(z_1)\nonumber\\
&&\quad \qquad {}\times
\frac{1}{n}\sum_{j< i}\E _0
\underline\vp_{ij}(z_2)\r_j^*\underline{
\F}_{ij}^{-1}(z_2)\M\H _{12}^{-1}(z_1)
\F_{ij}^{-1}(z)\underline{\F}_{ij}^{-1}(z_2)
\r _j\nonumber
\end{eqnarray}
and
\begin{eqnarray*}
(1-z_1) (1-z_2)\varpi^{\E}_{12}(z_1)
\sum_{j< i}\E_0\underline\vp
_{ij}(z_2)\r_j^*\underline{
\F}_{ij}^{-1}(z_2)\M\H_{12}^{-1}(z_1)
\F _{ij}^{-1}(z)\underline{\F}_{ij}^{-1}(z_2)
\r_j=\mathrm{O}_p(n).
\end{eqnarray*}
Applying \eqref{evpi} to rewrite the first term of \eqref{le4.81} as
%
\begin{eqnarray}
&&\sum_{j< i}\E_0\underline
\vp_{ij}(z_2)\r_j^*\F _{ij}^{-1}(z)
\underline{\F}_{ij}^{-1}(z_2)\r_j
\r_j^*\underline{\F }_{ij}^{-1}(z_2)
\M\H_{12}^{-1}(z_1)\r_j
\nonumber
\\
&&\quad =\varpi^{\E}_{12}(z_2)n^{-2}\sum
_{j< i}\E_0\tr\F _{ij}^{-1}(z_1)
\underline{\F}_{ij}^{-1}(z_2)\tr\underline{\F
}_{ij}^{-1}(z_2)\M\H_{12}^{-1}(z_1)\label{le4.82}
\\
&&\qquad {}+\varpi^{\E}_{12}(z_2)\sum
_{j< i}\E_0\cT_{ij1}(z_1,z_2)
\cT _{ij4}(z_1,z_2)\label{le4.83}
\\
&&\qquad {}-\varpi^{\E}_{12}(z_2)\sum
_{j< i}\E_0\underline\vp _{ij}(z_2)
\underline\ga_{ij}(z_2)\r_j^*
\F_{ij}^{-1}(z)\underline {\F}_{ij}^{-1}(z_2)
\r_j\r_j^*\underline{\F}_{ij}^{-1}(z_2)
\H _{12}^{-1}(z_1)\r_j\label{le4.84},
\end{eqnarray}
where $\cT_{ij4}(z_1,z_2)=\r_j^*\underline{\F}_{ij}^{-1}(z_2)\M\H
_{12}^{-1}(z_1)\r_j-n^{-1}\tr\underline{\F}_{ij}^{-1}(z_2)\M\H
_{12}^{-1}(z_1)$.
The arguments\vspace{2pt} in \eqref{le4.7x} and \eqref{le4.7y} and \eqref{f16}
ensure that
\begin{eqnarray*}
\eqref{le4.83}=\mathrm{O}_p(1)\quad  \mbox{and}\quad  \eqref{le4.84}=
\mathrm{O}_p(1).
\end{eqnarray*}
In addition, from \eqref{ffi} and \eqref{ddi}, we have
\begin{eqnarray*}
\E_0\tr\underline{\F}_{ij}^{-1}(z_2)
\H_{12}^{-1}(z_1)=\E_0 \tr
\underline{\F}_{i}^{-1}(z_2)\H_{12}^{-1}(z_1)+
\mathrm{O}_p(1).
\end{eqnarray*}
Then using \eqref{fhhhh} again and repeating similar arguments as in
the proof of Lemma~\ref{lemma4.7} we obtain that
\begin{eqnarray*}
\E_0 \tr\underline{\F}_{i}^{-1}(z_2)
\H_{12}^{-1}(z_1)=\tr\H _{12}^{-1}(z_2)
\H_{12}^{-1}(z_1)+\mathrm{O}_p(1).
\end{eqnarray*}
Combining the above arguments, we conclude that
\begin{eqnarray*}
&&\E_0 n^{-1}\tr\M H_{(2)}(z_1)
\E_i\bigl(\F_i^{-1}(z_2)\bigr)
\\
&&\quad =-(i-1)n^{-2}(1-z_1) (1-z_2)
\varpi^{\E}_{12}(z_1)\varpi^{\E
}_{12}(z_2)\\
&&\qquad{}\times\tr\H_{12}^{-1}(z_2)\M
\H_{12}^{-1}(z_1)\E_0\tr\F
_{i}^{-1}(z_1)\underline{\F}_{i}^{-1}(z_2)\\
&&\qquad {}+
\mathrm{O}_p(1),
\end{eqnarray*}
which complete the proof of the lemma.
\end{pf}
%
\begin{remark}
Let $\H_{1}=\H_{1}(z)=(1-z) \varpi^{\E}_{1}\I-z\alpha_n\T_N$. We
conclude from the above arguments
and the fact $|\varpi^{\E}_{12}-\varpi^{\E}_1|=\mathrm{O}(n^{-1})$ that\vspace*{1pt}
%
\begin{eqnarray}
\label{good1} \mathbf{e}_k^*\F_i^{-1}(z)
\mathbf{e}_k=\mathbf{e}_k^*\H_{1}^{-1}(z)
\mathbf{e}_k+\mathrm{O}_p\bigl(n^{-1/2}\bigr)
\end{eqnarray}
and\vspace*{1pt}
\begin{eqnarray}\label{good4}
&&\frac{1}{n}\tr\F_i^{-1}(z_1)
\E_i\bigl(\F_i^{-1}(z_2)\bigr)\nonumber\\[-8pt]\\[-8pt]
&&\quad \to
\frac{\sklfrac{1}{n}\tr\H_{1}^{-1}(z_2)\H
_{1}^{-1}(z_1)}{1-\sklvfrac{(i-1)(1-z_1)(1-z_2)\varpi^{\E
}_{1}(z_1)\varpi^{\E}_{1}(z_2)}{n^{2}}\tr\H_{1}^{-1}(z_2)\H
_{1}^{-1}(z_1)}.\nonumber
\end{eqnarray}
Here we have used the fact that the denominator
of \eqref{good4} is bounded when $\min\{\th_1,\th_2\}>0$.
\end{remark}

%

%

\subsection{Proof of Lemma \texorpdfstring{\protect\ref{leclt}}{3.1}}
Note that
the contour $\cC$ of the integration
contains four segments: two horizontal lines and two vertical lines. We
need to calculate the limit of $S_{n1}(z)$ at the four segments respectively.
First of all, considering the top horizontal line $\cC^t=\{z\in
\mathbb{C}:\Re z\in[c_l-\th,c_r+\th],\Im z=\th\}$, we know that
there exists some event $\mathcal{Q}_n$ with $\P(\cQ_n)\to1$ such that,\vspace*{1pt}
\begin{eqnarray*}
\E\bigl|s_n(z) -s_n(z)I(\cQ_n)\bigr|\leq(\Im
z)^{-1}\P\bigl(\cQ_n^c\bigr)\to0.
\end{eqnarray*}
In this part, we let $\cQ=\cQ_n=\{\|(\S_n+\alpha_n\T_N)^{-1}\|\leq
C\}$ with some $C<\infty$. By \eqref{rem1.3} we have that for any
$l>0$, $\P(\cQ^c)\leq n^{-l}$. It is known that $\la_1^{\S+\alpha
_n\T_N}\geq\la_1^{\S_{i}+\alpha_n\T_N}\geq\la_1^{\S
_{ij}+\alpha_n\T_N}$ for any $i,j$, which implies\vspace*{1pt}
\begin{eqnarray*}
\cQ\supseteq\cQ_{i}\supseteq\cQ_{ij}.
\end{eqnarray*}
Here $\cQ_{i}=\{\|(\S_{i}+\alpha_n\T_N)^{-1}\|\le C\}$ and $\cQ
_{ij}=\{\|(\S_{ij}+\alpha_n\T_N)^{-1}\|\leq C\}$. Notice that we
also have
\begin{eqnarray*}
\P\bigl(\cQ_i^c\bigr)\leq n^{-l}\quad \mbox{and}\quad
\P\bigl(\cQ_{ij}^c\bigr)\leq n^{-l}.
\end{eqnarray*}
Now
we rewrite $S_{n1}(z)$ as $S_{n1}=S_{n1}^{(1)}+S_{n1}^{(2)}+\mathrm{o}_p(1)$ with
\begin{eqnarray*}
S_{n1}^{(1)}&=&p \bigl(s_n(z)I(\cQ)-
\E_0\bigl[ s_n(z)I(\cQ)\bigr] \bigr)\qquad  \mbox {covariance
part},
\\
S_{n1}^{(2)}&=&p \bigl(\E_0 s_n(z)I(
\cQ)-s_0(z)I(\cQ) \bigr) \qquad \mbox {mean part}.
\end{eqnarray*}
%
\subsubsection{The covariance part}
The martingale decomposition used in the proof of Lemma~\ref{efef}
gives that
\begin{eqnarray*}
S_{n1}^{(1)}&=&\sum_{i=1}^n(
\E_i-\E_{i-1})\tr\bigl(\D^{-1}-\D
_i^{-1}\bigr)I(\cQ_i)+\mathrm{o}_p(1)
\\
&=&\sum_{i=1}^n(\E_i-
\E_{i-1})\tr(\S-\S_i)\F^{-1}(z)I(
\cQ_i)
\\
&&{} +\sum_{i=1}^n(\E_i-
\E_{i-1})\tr(\S_i+\alpha_n\T_N)
\bigl(\F ^{-1}-\F_i^{-1} \bigr)I(
\cQ_i)+\mathrm{o}_p(1)
\\
&=&\sum_{i=1}^n(\E_i-
\E_{i-1}) \eta_i(z)I(\cQ_i)-\mathcal
D_1-\cD_2+\mathrm{o}_p(1),
\end{eqnarray*}
where
\begin{eqnarray*}
\cD_1&=&\sum_{i=1}^n(
\E_i-\E_{i-1}) (1-z)\varpi_i
\r_{i}^*\F _i^{-1}(z)\r_i
\r_i^*\F_i^{-1}(z)\r_{i}I(
\cQ_i),
\\
\cD_2&=&\sum_{i=1}^n(
\E_i-\E_{i-1}) (1-z)\varpi_i
\r_{i}^*\F _i^{-1}(z) (\S_i+
\alpha_n\T_N)\F_i^{-1}(z)
\r_{i}I(\cQ_i).
\end{eqnarray*}
Here we used (\ref{ffi}) and the fact that
%
\begin{eqnarray}\label{pcqicq}
\P\bigl(I(\cQ_i)\neq I(\cQ)\bigr)\leq n^{-l}.
\end{eqnarray}
Check that
\begin{eqnarray*}
\r_{i}^*\F_i^{-1}(z)\r_i
\r_i^*\F_i^{-1}(z)\r_{i}=\eta
_i^2(z)+\frac{2}{n}\eta_i(z)\tr
\F_i^{-1}(z)+\biggl(\frac{1}{n}\tr
\F_i^{-1}(z)\biggr)^2.
\end{eqnarray*}
Applying \eqref{trvpi}, \eqref{eetail} and Lemma~\ref{bhi} we obtain
\begin{eqnarray*}
\E\biggl| \cD_1-\sum_{i=1}^n(
\E_i-\E_{i-1}) \biggl( \frac{2(1-z) \varpi
^{\mathrm{tr}}_i\eta_i}{n}\tr
\F_i^{-1}-\frac{( 1-z)^2(\varpi^{\mathrm{tr}}_i)^2\eta
_i}{n^2}\bigl(\tr\F_i^{-1}
\bigr)^2\biggr)I(\cQ_i)\biggr|^2=\mathrm{o}(1).
\end{eqnarray*}
Similarly we have
\begin{eqnarray*}
&&\E\biggl| \cD_2-\sum_{i=1}^n(
\E_i-\E_{i-1}) \biggl((1-z)\varpi^{\mathrm{tr}}_iK_i(z)\\
&&\hphantom{\E\biggl| \cD_2-\sum_{i=1}^n(
\E_i-\E_{i-1}) \biggl(}{}-
\frac{(1-z)^2(\varpi^{\mathrm{tr}}_i)^2\eta_i}{n}\tr\F_i^{-2}(\S_i+\alpha
_n\T_N) \biggr)I(\cQ_i)\biggr|^2=
\mathrm{o}(1),
\end{eqnarray*}
where $K_i(z)=\r_{i}^*\F_i^{-1}(z)(\S_i+\alpha_n\T_N)\F
_i^{-1}(z)\r_{i}-n^{-1}\tr\F_i^{-2}(z)(\S_i+\alpha_n\T_N)$.
Thus we have
\begin{eqnarray*}
&&p\bigl(s_n(z)-\E s_n(z)\bigr)I(\cQ)
\\
&&\quad =\sum_{i=1}^n
\E_i \biggl( \bigl(\varpi^{\mathrm{tr}}_i
\bigr)^2\eta _i-(1-z)\varpi^{\mathrm{tr}}_iK_i(z)
\\
&&\hphantom{\sum_{i=1}^n
\E_i \biggl(}\qquad {} + \frac{(1-z)^2(\varpi^{\mathrm{tr}}_i)^2\eta_i}{n}\tr\F _i^{-1}(z) (
\S_i+\alpha_n\T_N)\F_i^{-1}(z)
\biggr)I(\cQ_i)+\mathrm{o}_p(1).
\end{eqnarray*}
Check that
\begin{eqnarray*}
-\frac{\mathrm{d}(1-z)\varpi^{\mathrm{tr}}_i(z)\eta_i(z)}{\mathrm{d}z} 
&=&-(1-z)
\varpi^{\mathrm{tr}}_i(z)K_i(z)+ \bigl(
\varpi^{\mathrm{tr}}_i(z)\bigr)^2\eta_i(z)
\\
&&{}+\frac{(1-z)^2(\varpi
^{\mathrm{tr}}_i(z))^2\eta_i(z)}{n}\tr\F_i^{-2}(z) (\S_i+
\alpha_n\T_N),
\end{eqnarray*}
which implies
\begin{eqnarray*}
&&\frac{1}{2\uppi \mathrm{i}}\int_{\mathcal{C}^t} f(z)p\bigl(s_n(z)-
\E s_n(z)\bigr)I(\cQ _i)\,\mathrm{d}z
\\
&&\quad =-\frac{1}{2\uppi \mathrm{i}}\sum_{i=1}^n\int
_{\mathcal{C}^t} f(z)\E_i I(\cQ _i) \,\mathrm{d}(1-z)
\varpi^{\mathrm{tr}}_i(z)\eta_i(z)+
\mathrm{o}_p(1).
\end{eqnarray*}

Apparently, $\{\E_i I(\cQ_i)\,\mathrm{d}(1-z)\varpi^{\mathrm{tr}}_i(z)\eta_i(z)/\mathrm{d}z\}$
is a martingale difference sequence so we can
resort to the CLT for martingale (see Theorem~35.12 in \cite
{Billingsley95P}). By Lemma~\ref{leby} and \eqref{fi}, we can get
\begin{eqnarray*}
\E\bigl|K_i(z)I(\cQ_i)\bigr|^4\leq
\frac{C\delta_n^4}{n},
\end{eqnarray*}
which together with \eqref{eetail} and \eqref{boetrvpi} implies
\[
\sum_{k=1}^n\E\bigl|I(\cQ_i)
\,\mathrm{d}(1-z)\varpi^{\mathrm{tr}}_i(z)\eta_i(z)/
\mathrm{d}z\bigr|^4= \mathrm{O}(\delta_n) \to0.
\]
This ensures the Lyapunov condition.
Thus, it is sufficient to investigate the limit of the following
covariance function
%
\begin{eqnarray}
-\frac{1}{4\uppi^2}\int_{\cC_1^t}\int_{\cC_2^t}
f(z_1) f(z_2)\frac{\partial^2}{\partial z_1\,\partial z_2}\mathcal
{G}_n(z_1,z_2)\,\mathrm{d}z_1\,
\mathrm{d}z_2,
\end{eqnarray}
where
\[
\mathcal{G}_{n}(z_1,z_2)=\sum
_{i=1}^n\E_{i-1} [\E_i
\bigl((1-z_1)\vp^{\mathrm{tr}}_i(z_1)
\eta_i(z_1)I(\cQ_i) \bigr)\E_i
\bigl((1-z_2)\vp^{\mathrm{tr}}_i(z_2)
\eta_i(z_2)I(\cQ_i) \bigr).
\]
From the arguments in \cite{BaiS04C}, we need to show $\mathcal
{G}_{n}(z_1,z_2)$ converges in probability. Applying (\ref{boetrvpi}),
\eqref{eetail}, \eqref{efiefil} and the fact $\varpi^{\mathrm{tr}}_i=\varpi
^{\E}_i-\varpi^{\mathrm{tr}}_i\varpi^{\E}_i\xi_i$, we have\vspace*{-1pt}
\begin{eqnarray*}
&&\mathcal{G}_{n}(z_1,z_2)
\\
&&\quad =(1-z_1) (1-z_2)\sum_{i=1}^n
\vp^{\E}_1(z_1)\vp^{\E}_1(z_2)
\E _{i-1} \bigl[\E_i \bigl(\eta_i(z_1)I(
\cQ_i) \bigr)\E_i \bigl(\eta _i(z_2)I(
\cQ_i) \bigr) \bigr]+\mathrm{o}_p(1).
\end{eqnarray*}
By Lemma~\ref{le5.2},
we have
%
\begin{eqnarray}
&&\E_{i-1} \bigl[\E_i \bigl(\eta_i(z_1)I(
\cQ_i) \bigr)\E_i \bigl(\eta _i(z_2)I(
\cQ_i) \bigr) \bigr]
\nonumber
\\[-8pt]\\[-8pt]
\label{ff_1}&&\quad =\frac{\E|x_{11}|^4-|\E x_{11}^2|-2}{n^2}\E_{i-1}\sum_{j=1}^n
\bigl[\E_i \bigl(\F_i^{-1}(z_1)I(
\cQ_i) \bigr)_{jj}\E_i \bigl(\F
_i^{-1}(z_2)I(\cQ_i)
\bigr)_{jj} \bigr]\nonumber
\\
\label{ff_2}&&\qquad {}+\frac{\E x_{11}^2+1}{n^2}\E_{i-1}\tr \bigl[\E_i \bigl(\F
_i^{-1}(z_1)I(\cQ_i) \bigr)
\E_i \bigl(\F_i^{-1}(z_2)I(
\cQ_i) \bigr) \bigr].
\end{eqnarray}
Using \eqref{good1}, we have
\begin{eqnarray*}
\eqref{ff_1}=\frac{\frm_x-\frt-2}{n^2}\sum_{j=1}^n
\bigl[ \bigl(\H _1^{-1}(z_1)
\bigr)_{jj} \bigl(\H_1^{-1}(z_2)
\bigr)_{jj} \bigr]+\mathrm{o}_p(1).
\end{eqnarray*}
It is worthy to remind the reader that in order to satisfy the
condition in the last subsection we used here the fact
\begin{eqnarray*}
\P\bigl(I(\cQ_i)\neq I(\cQ_{ij})\bigr)\leq
n^{-l}.
\end{eqnarray*}
And by \eqref{good4}, we have\vspace*{-1pt}
\begin{eqnarray*}
\eqref{ff_2}=\frac{\frt+1}{n}\frac{\sklfrac{1}{n}\tr\H
_{1}^{-1}(z_2)\H_{1}^{-1}(z_1)}{1-\sklvfrac{(i-1)(1-z_1)(1-z_2)\varpi
^{\E}_{1}(z_1)\varpi^{\E}_{1}(z_2)}{n^{2}}\tr\H_{1}^{-1}(z_2)\H
_{1}^{-1}(z_1)}+
\mathrm{o}_p(1).
\end{eqnarray*}
From the arguments of the next part, we can conclude that for $z\in\cC^t$
\begin{eqnarray*}
\E_0 s_n(z)=s_0(z)+\mathrm{O}
\bigl(n^{-1}\bigr)\stackrel{\mathrm{i.p.}} {\longrightarrow} s(z).
\end{eqnarray*}
Thus we get in probability
\begin{eqnarray*}
&&\mathcal{G}_{n}(z_1,z_2)\\
&&\quad \to(\frt+1)\int
\biggl[\biggl(\int\frac
{y(1-z_1)(1-z_2)\vp(z_1)\vp(z_2)}{ ((1-z_1)\vp-z_1\alpha
t ) ((1-z_2)\vp-z_2\alpha t )}\,\mathrm{d}F_{\mathrm{mp}}^{Y}(t)\biggr)\\
&&\hphantom{\quad \to(\frt+1)\int
\biggl[}{}\times \biggl(1-w\int
\frac{y(1-z_1)(1-z_2)\vp(z_1)\vp(z_2)}{ ((1-z_1)\vp-z_1\alpha
t ) ((1-z_2)\vp-z_2\alpha t
)}\,\mathrm{d}F_{\mathrm{mp}}^{Y}(t)\biggr)^{-1}\biggr]\,\mathrm{d}w
\nonumber
\\
&&\qquad\,\,{}{}+(\frm_x-\frt-2)y\int\frac{(1-z_1)\vp(z_1)}{(1-z_1)\vp-z_1\alpha
t}\,\mathrm{d}F_{\mathrm{mp}}^{Y}(t)
\int\frac{(1-z_2)\vp(z_2)}{(1-z_2)\vp-z_2\alpha
t}\,\mathrm{d}F_{\mathrm{mp}}^{Y}(t),
\end{eqnarray*}
which is \eqref{phi4}.

In addition,
by
definition of $S_{n1}^{(1)}$ we get
\begin{eqnarray*}
\E\bigl|S_{n1}^{(1)}(z_1)-S_{n1}^{(1)}(z_2)\bigr|^2=|z_1-z_2|^2
\E\bigl|\tr\D ^{-1}(z_1)\D^{-1}(z_2)-
\E_0\tr\D^{-1}(z_1)\D^{-1}(z_2)\bigr|^2I(
\cQ). 
\end{eqnarray*}
Therefore using \eqref{ffi}, Lemmas \ref{efef}, \ref{lem7} and the
fact that
\[
\D^{-1}(z)=(1-z)^{-1}\bigl(\I+\alpha_n
\T_N\F^{-1}(z)\bigr),
\]
we can easily check that
%
\begin{eqnarray}\label{tight}
\E\bigl|S_{n1}^{(1)}(z_1)-S_{n1}^{(1)}(z_2)|^2
\leq C |z_1-z_2\bigr|^2,\qquad  z_1,z_2
\in\cC^t,
\end{eqnarray}
which implies the sequence $\{S_{n1}^{(1)}(\cdot)\}$ forms
a tight sequence on $\cC^t$.

\subsubsection{The mean part}
From the definition of the Stieltje transform of $s_n(z)$, we have
\begin{eqnarray*}
s_n(z)&=&s_{F^{\B_n}}=\frac{1}{p}\tr\D^{-1}=
\frac{1}{p}\tr(\S _n+\alpha_n\T_N)
\F^{-1}(z)
\\
&=&\biggl(1+\frac{1-z}{z}\biggr)\frac{1}{p}\tr\S_n
\F^{-1}(z)-\frac{1}{z} =\frac{1}{zp}\tr\S_n
\F^{-1}(z)-\frac{1}{z}.
\end{eqnarray*}
Using \eqref{mn}
we get that
%
\begin{eqnarray}\label{sn1}
\S_n\F^{-1}(z) =\sum_{i=1}^n
\varpi_i\r_{i}\r_{i}^*\F_i^{-1},
\end{eqnarray}
which implies
\begin{eqnarray*}
\frac{1}{p}\tr\S_n\F^{-1}(z) 
=
\frac{n}{p(1-z)} \Biggl(1-\frac{1}{n}\sum_{i=1}^n
\vp_i \Biggr).
\end{eqnarray*}
Thus we have
%
\begin{eqnarray}
\frac{1}{n}\sum_{i=1}^n
\vp_i=1-y_n(1-z) (zs_n+1)
\end{eqnarray}
and
%
\begin{eqnarray}\label{evp1}
\E_0\vp_1I(\cQ_1)=1-y_n(1-z)
\bigl(z\E_0 s_nI(\cQ_1)+1
\bigr).
\end{eqnarray}

Denote $\A_n=
\E_0(\vp_1I(\cQ_1)) (\E_0(\vp_1I(\cQ_1))\I+\alpha_n\T
_N )^{-1}$,
\[
\C_n=\A_n-z\I \De(z)=\E_0\quad  \mbox{and}\quad
s_n(z)I(\cQ_1)-p^{-1}\tr\C_n.
\]
Then we obtain that
%
\begin{eqnarray}
\label{eqcn} p^{-1}\tr\C_n =\int\frac{\E_0\vp_1I(\cQ_1)+\alpha_nt}{(1-z)\E_0\vp_1I(\cQ
_1)-z\alpha_nt}\,
\mathrm{d}F^{\T_N}(t).
\end{eqnarray}

Recalling the definition of $\vp_0$ and \eqref{sti1} we have
\begin{eqnarray*}
\frac{1-\vp_0}{zy(1-z)}-\frac{1}{z}=\frac{1}{1-z}+\frac
{1}{1-z}
\int\frac{\alpha_nt}{(1-z)\vp_0-z\alpha_nt}\,\mathrm{d}F^{\T_N}(t),
\end{eqnarray*}
which implies
\begin{eqnarray*}
\vp_0= \biggl(1+y\int\frac{(1-z)}{(1-z)\vp_0-z\alpha_nt}\,\mathrm{d}F^{\T
_N}(t)
\biggr)^{-1}.
\end{eqnarray*}
According to \eqref{evp1} and \eqref{eqcn}, we get that
\begin{eqnarray*}
&&\E_0\vp_1I(\cQ_1)
\\
&&\quad =\biggl(1+y\int\frac{(1-z)}{(1-z)\E_0\vp_1I(\cQ
_1)-z\alpha_nt}\,\mathrm{d}F^{\T_N}(t)+\bigl(
\E_0\vp_1I(\cQ_1)\bigr)^{-1}zy(1-z)
\De_n\biggr)^{-1}.
\end{eqnarray*}
The difference of the above two identities yields
\begin{eqnarray*}
\vp_0-\E\vp_1I(\cQ_1) 
&= &\int
\frac{\E_0 \vp_1I(\cQ_1)\vp_0 y(1-z)^2 (\vp_0-\E
_0\vp_1I(\cQ_1) )}{[(1-z)\vp_0-z\alpha_nt][(1-z)\E_0\vp
_1I(\cQ_1)-z\alpha_nt]}\,\mathrm{d}F^{\T_N}(t)
\\
&&{}+\vp_0zy(1-z)\De_n.
\end{eqnarray*}
Thus we use \eqref{pcqicq} to obtain that
%
\begin{eqnarray}\label{last1}
&&\hspace*{-20pt}\E_0 s_n(z)I(\cQ)-s_0(z)
\nonumber
\\[-8pt]\\[-8pt]
&&\hspace*{-20pt}\quad =\vp_0 \De_n \biggl(1-\int\frac{y_n(1-z)^2\E_0 \vp_1I(\cQ_1)\vp
_0}{[(1-z)\vp_0-z\alpha_nt][(1-z)\E_0\vp_1I(\cQ_1)-z\alpha
_nt]}\,
\mathrm{d}F^{\T_N}(t) \biggr)^{-1}.\nonumber
\end{eqnarray}

We will use the following lemma.
%
\begin{lemma} \label{lede}For $z\in\cC_t$
\begin{eqnarray*}
p\De(z)&=&\frac{(\frm_x-\frt-2)\alpha_n(1-z)(\varpi^{\E
}_1)^2}{pn}\tr\H_0^{-1}(z)\tr
\H^{-2}_0(z)\T_N
\\
&&{}+\frac{\frt(1-z)(\varpi^{\E}_1)^2\alpha_n }{n}\tr\H_0^{-3}(z)\T _N +
\mathrm{o}(1).
\end{eqnarray*}
\end{lemma}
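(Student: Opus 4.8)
The plan is to read Lemma~\ref{lede} as the standard ``bias term'' computation in the RMT central limit machinery: $p\De(z)$ is an $O(1)$ quantity that survives once the two $O(n)$ leading orders cancel, and the task is to isolate the surviving second--order contributions. First I would reduce $p\De$ to a difference of traces. Starting from $s_n=\frac{1}{zp}{\rm tr}\S_n\F^{-1}-\frac1z$, the splitting \eqref{sn1}, the identity $\vp_i\r_i^*\F_i^{-1}\r_i=(1-\vp_i)/(1-z)$ and exchangeability of the $\r_i$, one uses \eqref{evp1}, \eqref{eqcn} and the bounds \eqref{rem1.3}, \eqref{pcqicq} (to pass between $I(\cQ)$, $I(\cQ_i)$ and $I(\cQ_1)$, and from $\P(\cQ_1)$ to $1$, each at cost $O(n^{-l})$) to obtain, with $w:=\E_0\vp_1I(\cQ_1)$ and $\H_0(z):=(1-z)w\,\I-z\a_n\T_N$ (so that $z\C_n=w\,\H_0^{-1}(z)-\I$),
$$
zp\,\De(z)=\frac{n}{1-z}\,(1-w)-w\,{\rm tr}\,\H_0^{-1}(z)+o(1).
$$

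The key point is that this right--hand side has two independent sources of an $O(1)$ correction. Since $1-\evp_1=(1-z)\bigl(n^{-1}\E_0{\rm tr}\F_1^{-1}\bigr)\evp_1$ holds exactly by the definition of $\evp_1$, one has $\frac{n}{1-z}(1-\evp_1)=\evp_1\,\E_0{\rm tr}\F_1^{-1}$, and hence
$$
zp\,\De(z)=\evp_1\bigl(\E_0{\rm tr}\F_1^{-1}(z)-{\rm tr}\,\H_0^{-1}(z)\bigr)-(1-z)(\evp_1)^2\,n\,\E_0\eta_1^2+o(1).
$$
The second term comes from $w-\evp_1$: iterating \eqref{evpi} once more gives $w=\evp_1+(1-z)^2(\evp_1)^3\E_0\eta_1^2+o(n^{-1})$, where I would use that $\E_0[\ga_1I(\cQ_1)]=O(n^{-l})$ (the $\eta_1$--part has conditional mean zero given $(\S_1,\T_N)$, which is the $\sigma$--field carrying both $\F_1$ and $I(\cQ_1)$, while the $\xi_1$--part is $\E_0$--centred up to $I(\cQ_1^c)$), that $\E_0[\ga_1^2I(\cQ_1)]=\E_0\eta_1^2+o(n^{-1})$ (the cross term vanishes by the same conditioning and $\E_0\xi_1^2=O(n^{-2})$ by \eqref{efiefil}), and that $\E_0[\vp_1\ga_1^3I(\cQ_1)]=o(n^{-1})$ by interpolating \eqref{egail} between $l=2$ and $l=4$; this $O(n^{-1})$ correction is then amplified by the $O(n)$ factor $\frac{n}{1-z}+{\rm tr}\H_0^{-1}=\frac{n}{(1-z)\evp_1}+O(1)$.

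It remains to evaluate the two pieces. For $\E_0\eta_1^2$ I would apply the quadratic--form second--moment identity (Lemma~\ref{le5.2}), conditioning on $\F_1$:
$$
\E_0\eta_1^2=\frac{\frm_x-\frt-2}{n^2}\,\E_0\sum_{j=1}^p(\F_1^{-1})_{jj}^2+\frac{\frt+1}{n^2}\,\E_0{\rm tr}\,\F_1^{-2},
$$
then replace $(\F_1^{-1})_{jj}$ by $(\H_0^{-1})_{jj}$ using \eqref{good1} (the error is $O(n^{1/2})$ in the sum, negligible after the $n^{-2}$) and ${\rm tr}\F_1^{-2}$ by its deterministic equivalent via the two--point formula \eqref{good4} specialised to $z_1=z_2=z$. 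For the first--order bias $\E_0{\rm tr}\F_1^{-1}-{\rm tr}\H_0^{-1}$ I would expand $\F_1^{-1}$ by \eqref{fhhhh}: $\E_0H_{(2)}=0$, while the traces of $H_{(1)}$, $H_{(3)}$ and the replacement of $\H_{12}$ (and of $\H_1$) by $\H_0$ each contribute explicit $O(1)$ pieces, to be read off with Lemmas~\ref{lemma4.3}, \ref{lemma4.4}, \ref{lem7}, \ref{lemma4.7}, \ref{lemma4.8}. Finally I would add the two contributions, convert every term into a $\T_N$--trace of a power of $\H_0^{-1}$ by repeatedly using the algebraic identity $z\a_n\H_0^{-1}(z)\T_N=(1-z)w\,\H_0^{-1}(z)-\I$ (equivalently $\a_n{\rm tr}\H_0^{-2}\T_N=\frac{(1-z)w}{z}{\rm tr}\H_0^{-2}-\frac1z{\rm tr}\H_0^{-1}$) together with the approximate fixed--point relation between $w$ and $n^{-1}{\rm tr}\H_0^{-1}$, observe that the ``$+1$'' half of the coefficient $\frt+1$ above is cancelled by the matching half coming from the first--order bias (which is why only the bare $\frt$ appears in the statement), and divide by $z$; inside the surviving $O(1)$ terms $w$ may be replaced by $\evp_1$ in $\H_0$ at cost $o(1)$.

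The main obstacle is precisely this last assembly: one must both (i) confirm that every other contribution --- the $\xi_1$ and $\ga_1^3$ terms, the $I(\cQ_1^c)$ remainders, and the variances of the diagonal entries and of the traces --- is genuinely $o(1)$, using \eqref{eetail}, \eqref{egail}, \eqref{efiefil} and Lemmas~\ref{lemma4.4}, \ref{lem7}, \ref{lemma4.7}, \ref{lemma4.8}, and (ii) track the cancellations carefully enough that exactly the two terms in the statement remain. Since all the bounds invoked are uniform for $\Im z=\th>0$, the estimate holds uniformly on $\cC^t$, which is what is needed for the contour integral in \eqref{cauchy}.
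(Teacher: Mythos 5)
Your preliminary reduction is correct and the route is genuinely different from the paper's. You arrive at $zp\De(z)=\frac{n}{1-z}(1-w)-w\,{\rm tr}\,\H_0^{-1}(z)+o(1)$ with $w=\E_0[\vp_1 I(\cQ_1)]$, then split off the exact identity $\frac{n}{1-z}(1-\evp_1)=\evp_1\,\E_0{\rm tr}\F_1^{-1}$ and evaluate $w-\evp_1$ via \eqref{evpi} and Lemma~\ref{le5.2} applied to $\E_0\eta_1^2$; all of this is sound. The paper instead writes $\D_n^{-1}-\C_n^{-1}=\C_n^{-1}(\A_n-\B_n)\D_n^{-1}$, expands $\B_n\D_n^{-1}$ via \eqref{sn1}, and uses exchangeability to reduce $p\De_n$ to four explicit terms $d_1,\dots,d_4$. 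That decomposition has a structural advantage: the coefficient $(\frt+1)$ that Lemma~\ref{le5.2} contributes inside $d_1$ (and $d_3$) is paired immediately with the $-1$ coming from $d_2$ (and $d_4$), so the reduction to $\frt$ is built in and needs no separate bookkeeping.

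The gap in your plan is exactly the other branch of your split. You are left with $\evp_1\bigl(\E_0{\rm tr}\F_1^{-1}(z)-{\rm tr}\H_0^{-1}(z)\bigr)$, which must be computed to accuracy $o(1)$. But this is itself a second-order bias calculation of the same nature and difficulty as $p\De(z)$: it is an $O(1)$ remainder surviving the cancellation of two $O(n)$ quantities. You propose to read it off from \eqref{fhhhh} together with Lemmas~\ref{lemma4.7} and \ref{lemma4.8}, but those lemmas only show that $\E_0{\rm tr}\M H_{(1)}$ and $\E_0{\rm tr}\M H_{(3)}$ are $O_p(1)$; to extract the formula of the lemma you must produce their exact $O(1)$ leading terms, and in addition the $O(1)$ contribution of ${\rm tr}\H_{12}^{-1}-{\rm tr}\H_0^{-1}$, none of which you actually compute. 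Consequently, the asserted cancellation --- that the $+1$ half of $\frt+1$ is cancelled by a matching contribution from the first-order bias --- is stated but not verified. Until that term is worked out in full, the argument does not close; and working it out would essentially reproduce, in a different order, the $d_1,\dots,d_4$ computation the paper carries out directly, so the route offers no economy.
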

\begin{pf}
It follows from the definition of $\D_n$ and $\C_n$
that
\begin{eqnarray*}
\D_n^{-1}-\C_n^{-1}=
\C_n^{-1}(\A_n-\B_n)
\D^{-1}_n =\C_n^{-1}\A_n
\D_n^{-1}-\C_n^{-1}\B_n
\D_n^{-1}.
\end{eqnarray*}
Using (\ref{sn1}), we have
\begin{eqnarray*}
\C^{-1}_n\B_n\D^{-1}_n =
\C^{-1} \sum_{i=1}^n
\varpi_i\r_{i}\r_{i}^*\F_i^{-1}(z)
\end{eqnarray*}
and
\begin{eqnarray*}
\C^{-1}_n\A(\B_n-z\I)^{-1}
=\C^{-1}_n\A\sum_{i=1}^n
\varpi_i\r_{i}\r_{i}^*\F_i^{-1}(z)
+\alpha_n\C^{-1}_n\A\T_N
\F^{-1}(z).
\end{eqnarray*}
%
%
%
%
Then from the definition of $\De(z)$ and \eqref{mn} we have
\begin{eqnarray*}
p\De_n &=&n\E_0\varpi_1\r_{1}^*
\F_1^{-1}(z)\C^{-1}\A\r_{1}I(
\cQ_1)
\\
&&{}+\E_0\alpha_n\tr\A\T_N\F^{-1}(z)
\C^{-1}I(\cQ_1) -n\E_0\varpi_1
\r_{1}^*\F_1^{-1}(z)\C^{-1}
\r_{i}I(\cQ_1)
\\
&=&d_1+d_2+d_3+d_4,
\end{eqnarray*}
where
\begin{eqnarray*}
d_1&=&n\E_0\varpi_1I(\cQ_1)
\r_{1}^*\F_1^{-1}(z)\C^{-1}\A\r
_{1}-\E_0\varpi_1I(\cQ_1)
\E_0\tr\F_1^{-1}(z)\C^{-1}\A,
\\
d_2&=&\E_0\varpi_1I(\cQ_1)
\E_0\tr\F_1^{-1}(z)\C^{-1}\A-\E
_0\varpi_1I(\cQ_1)\E_0\tr
\F^{-1}(z)\C^{-1}\A,
\\
d_3&=&\E_0\varpi_1I(\cQ_1)
\E_0\tr\F_1^{-1}(z)\C^{-1}-n
\E_0\varpi _1I(\cQ_1)\r_{1}^*
\F_1^{-1}(z)\C^{-1} \r_{1},
\\
d_4&=&\E_0 \varpi_1I(\cQ_1)
\E_0\tr\F^{-1}(z)\C^{-1}-\E_0\varpi
_1I(\cQ_1)\E_0\tr\F_1^{-1}(z)
\C^{-1}.
\end{eqnarray*}

First, consider $d_1$. We apply \eqref{evpi} and \eqref{gai} to
represent $d_1$ as
%
\begin{eqnarray}
\label{d11}d_1&=&-n(1-z) \bigl(\varpi^{\E}_1
\bigr)^2\E_0\eta_1 \bigl(\r
_{1}^*\F_1^{-1}\C^{-1}\A
\r_{1}- n^{-1}\tr\F_1^{-1}
\C^{-1}\A \bigr)I(\cQ_1)
\\
\label{d12}&&{}+ (1-z) \bigl(\varpi^{\E}_1\bigr)^2
\E_0\xi_1 \bigl(\tr\F_1^{-1}
\C^{-1}\A-\E _0\tr\F_1^{-1}
\C^{-1}\A \bigr)I(\cQ_1)
\\
\label{d13}&&{}+n(1-z)^2\bigl(\varpi^{\E}_1
\bigr)^2 \bigl(\E_0\vp_1
\ga_1^2\r_{1}^*\F _1^{-1}
\C^{-1}\A\r_{1}\nonumber\\[-8pt]\\[-8pt]
&&\hphantom{{}+n(1-z)^2\bigl(\varpi^{\E}_1
\bigr)^2 \bigl(}{}- n^{-1}\E_0
\vp_1\ga_1^2\E_0\tr\F
_1^{-1}\C^{-1}\A \bigr)I(\cQ_1).\nonumber
\end{eqnarray}
Note that similar to \eqref{bohij} we can get that $\|\C^{-1}\|$ and
$\|\A\C^{-1}\|$ are both bounded when $z\in\cC^t$. Thus by Lemma~\ref{efef} and Lemma~\ref{leby} we obtain that
\begin{eqnarray*}
\E\bigl|\tr\F_1^{-1}(z)\C^{-1}\A-\E_0
\tr\F_1^{-1}(z)\C^{-1}\A \bigr|^2&=&
\mathrm{O}(1),
\\
\E\bigl|\r_{1}^*\F_1^{-1}(z)\C^{-1}\A
\r_{1}-n^{-1}\tr\F_1^{-1}(z)\C
^{-1}\A\bigr|^2&=&\mathrm{O}\bigl(n^{-1}\bigr).
\end{eqnarray*}
These together with \eqref{eetail}, \eqref{boetrvpi}, \eqref
{efiefil} and H\"{o}lder's inequalityimply that
\begin{eqnarray*}
(\ref{d12})=\mathrm{O}_p\bigl(n^{-1/2}\bigr)\quad  \mbox{and}\quad  (\ref{d13})=\mathrm{O}_p\bigl(\delta_n^2
\bigr).
\end{eqnarray*}
Using Lemma \eqref{le5.2}, we have
\begin{eqnarray*}
\eqref{d11}&=&-{(\frm_x-\frt-2) (1-z) \bigl(\varpi^{\E}_1
\bigr)^2y_n}\E_0 \bigl(\F_1^{-1}(z)
\bigr)_{11} \bigl(\F_1^{-1}(z)\C^{-1}
\A \bigr)_{11}I(\cQ _1)
\\
&&{}-\frac{(\frt+1)(1-z)(\varpi^{\E}_1)^2}{n}\E_0\tr\F_1^{-2}(z)\C
^{-1}\A I(\cQ_1).
\end{eqnarray*}

For $d_2$, we use \eqref{ffi} to get
\begin{eqnarray*}
d_2&=&(1-z)\E_0\varpi_1\E_0
\bigl(\vp_1I(\cQ_1) \r_1^*\F
_1^{-1}(z)\C^{-1}\A\F_1^{-1}(z)I(
\cQ_1)\r_1 \bigr)
\\
&=&\frac{(1-z)(\varpi^{\E}_1)^2}{n}\tr\F_1^{-2}(z)\C^{-1}\A
I(\cQ _1)+\mathrm{o}_p(1).
\end{eqnarray*}
Similarly, we can get
\begin{eqnarray*}
d_3&=&{(\frm_x-\frt-2) (1-z) \bigl(\varpi^{\E}_1
\bigr)^2y_n}\E_0 \bigl(\F
_1^{-1}(z) \bigr)_{11} \bigl(
\F_1^{-1}(z)\C^{-1} \bigr)_{11}I(
\cQ_1)
\\
&&{}+\frac{(\frt+1)(1-z)(\varpi^{\E}_1)^2}{n}\E_0\tr\F_1^{-2}(z)\C
^{-1}I(\cQ_1)+\mathrm{o}_p(1)
\end{eqnarray*}
and
\begin{eqnarray*}
d_4=-\frac{(1-z)(\varpi^{\E}_1)^2}{n}\tr\F_1^{-2}(z)
\C^{-1}+\mathrm{o}_p(1).
\end{eqnarray*}
Therefore combining the above four equations, we conclude that
\begin{eqnarray*}
&& \E_0\tr\D_n^{-1}I(\cQ_1)-\tr
\C_n^{-1}
\\
&&\quad  =\frac{(\frm_x-\frt-2)(1-z)(\varpi^{\E}_1)^2p}{n}\E_0 \bigl(\F _1^{-1}(z)
\bigr)_{11} \bigl(\F_1^{-1}(z)\C^{-1}(
\I-\A) \bigr)_{11}I(\cQ_1)
\\
&&\qquad {} + \frac{\frt(1-z)(\varpi^{\E}_1)^2}{n}\E_0\tr\F_1^{-2}(z)
\C ^{-1}(\I-\A) I(\cQ_1) +\mathrm{o}_p(1).
\end{eqnarray*}
By Lemmas \ref{lemma4.3}--\ref{lem7} and the fact that
\begin{eqnarray*}
\bigl\|\C^{-1}(\I-\A)\bigr\|=\bigl\llvert \alpha_n\T_N
\bigl((1-z)\E_0\vp_1I(\cQ _1)\I-z
\alpha_n\T_N \bigr)^{-1}\bigr\rrvert \leq C,
\end{eqnarray*}
we have that
\begin{eqnarray*}
\E_0\tr\D_n^{-1}-\tr\C_n^{-1}&=&
\frac{(\frm_x-\frt-2)\alpha
_n(1-z)(\varpi^{\E}_1)^2}{pn}\tr\H_0^{-1}(z)\tr\H^{-2}_0(z)
\T_N
\\
&&{}+\frac{\frt(1-z)(\varpi^{\E}_1)^2\alpha_n }{n}\tr\H_0^{-3}(z)\T _N +
\mathrm{o}_p(1),
\end{eqnarray*}
which complete the proof of this lemma.
\end{pf}

Noting the transform $ \ddot s_{n}(z)=\frac{y}{(1+z)^2} s_{n}(\frac
{z}{1+z})-\frac{1-y+z}{z(1+z)}$, $ \ddot s_{0}(z)=\frac{y}{(1+z)^2}
s_{0}(\frac{z}{1+z})-\frac{1-y+z}{z(1+z)}$ and
(3.12) in \cite{BaiS98N} we have that for $z\in\cC_t$
\begin{eqnarray*}
\biggl\| \biggl(1-\int\frac{\alpha_ny_nz(1-z)t}{[(1-z)\E\vp_1-z\alpha
_nt][(1-z)\vp_0-z\alpha_nt]}\,\mathrm{d}F^{\T_N}(t)
\biggr)^{-1}\biggr\|\leq C_\th.
\end{eqnarray*}
Thus we have $\E s_n=s_0+\mathrm{O}(n^{-1})\to s$, which combined with \eqref
{last1} gives \eqref{phi3}.

We so far have proved Lemma~\ref{leclt} under the condition that $z\in
\cC^t$. It is easy to check that the above arguments evidently work
when $z$ belongs to the bottom line due to symmetry.

When $z$ belongs to
the left vertical line of the contour, that is $z\in\cC^l=\{\Re
z=c_l-\th,\Im z\in[- \th,\th]\}$, we split $\cC^l$ into two parts
$\cC^l_1+\cC^l_2$ where
\begin{eqnarray*}
\cC^l_1=\bigl\{\Re z=c_l-
\th,n^{-1}\ep_n<|\Im z|<\th\bigr\} \quad \mbox{and}\quad
\cC^l_2=\bigl\{\Re z=c_l-\th,|\Im
z|<n^{-1}\ep_n\bigr\}
\end{eqnarray*}
with $\ep_n=n^{-\beta}$ for some $\beta\in(0,1)$.
We truncate $s_n$ at each part, that is
\begin{eqnarray*}
\hat s_n(z)= \left\{ %
\begin{array} {l@{\qquad}l} s_n(z),
& \mbox{$z\in\cC^l_1$;}
\\\noalign{\vspace*{3pt}}
s_n\bigl(\Re z+\mathrm{i}n^{-1}\ep_n\bigr), & \mbox{$z
\in\cC^l_2$.} \end{array} %
\right.
\end{eqnarray*}
Then from a similar argument in \cite{BaiS04C} we can get that the
limit of $p(\hat s_n(z)I(\cQ)-s_0)$ has the same form as Lemma~\ref
{leclt} provided. Here $\cQ=\{\|(\S_n+\alpha_n\T_N)^{-1}\|\leq C\}
\cap\{\la_1^{\S_n}>c_l-\iota\}$ with small enough $\iota>0$.
And the situation is the same if $z$ belongs to the right vertical line
of the contour due to symmetry.
We omit the details.

\section{Some basic lemmas}\label{sec5}
In this section, we give some basic lemmas which are used in the paper.
%
\begin{lemma}[(Lemma~6.1 in \cite{Zheng12C})]\label{limit}
\begin{eqnarray*}
z&=&-\frac{\dddot s(z)(\dddot s(z)+1-y)}{(\dddot s(z)+1/(1-Y))(1-Y)},\\
 \ddot s(z)&=&\frac{(\dddot s(z)+1/(1-Y))(1-Y)}{\dddot s(z)(\dddot s(z)+1)},
\\
\bigl(\dddot s(z)\bigr)'&=&-\frac{(\dddot s(z)+1/(1-Y))^2(1-Y)^2}{(1-Y)\dddot
s(z)^2+2\dddot s(z)+1-y},\\
 \int
\frac{1}{\ddot s(z)+ t}\,\mathrm{d}F_{\mathrm{mp}}^{Y}(t)&=&
\frac{\dddot s(z)}{(\dddot
s(z)+1/(1-Y))(1-Y)},
\\
\int t \bigl(\ddot s(z)+ t \bigr)^{-2}\,\mathrm{d}F_{\mathrm{mp}}^{Y}(t)&=&
\frac{(\dddot
s(z))^2}{(1-Y)\dddot s(z)^2+2\dddot s(z)+1},
\\
\ddot s'(z)=-\frac{(1-Y)\dddot s(z)^2+2\dddot s(z)+1}{(\dddot
s(z))^2(\dddot s(z)+1)^2}\bigl(\dddot s(z)
\bigr)'
&=&-{\bigl(1-Y\bigl(\dddot s(z)\bigr)^2\bigl(\dddot
s(z)+1\bigr)^2\bigr)}\dddot s^{-2}(z) \bigl(\dddot s(z)
\bigr)',
\\
\frac{2y\int\alpha t (\ddot s(z) )^3 (\ddot s(z)+
t )^{-3}\,\mathrm{d}F_{\mathrm{mp}}^{Y}(t)}{ (1-y\int (\ddot s(z)
)^2 (\ddot s(z)+ t )^{-2}\,\mathrm{d}F_{\mathrm{mp}}^{Y}(t) )^2}
&=&-\biggl(\log \frac{(1-Y)\dddot s(z)^2+2\dddot s(z)+1-y}{(1-Y)\dddot
s(z)^2+2\dddot s(z)+1} \biggr)',
\\
\frac{2Y \ddot s'(z) (\dddot s(z) )^3 (\dddot s(z)+
t )^{-3}}{ ((1-Y(\dddot s(z))^2(\dddot s(z)+1)^2)
)^2}&=&\bigl(\log \bigl(1-Y\bigl(\dddot s(z)\bigr)^2
\bigl(\dddot s(z)+1\bigr)^2\bigr) \bigr)'.
\end{eqnarray*}
\end{lemma}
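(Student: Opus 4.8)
The plan is to derive every identity in the lemma from one source — the self-consistent (Silverstein) equation for the companion Stieltjes transform of the Marchenko--Pastur law $F_{mp}^{Y}$ — combined with the equation defining $\ddot s(z)$ recorded in Section~3 and elementary differentiation. Since this is precisely Lemma~6.1 of \cite{Zheng12C}, one could also simply cite that reference; I sketch the self-contained route.

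First I would recall that the companion transform $\ddot s_{mp}^{Y}(w)=-w^{-1}(1-Y)+Ys_{mp}^{Y}(w)$ of the M-P law satisfies, for $w$ in the upper half plane and hence (by analytic continuation) on the relevant image domain, the self-consistent equation $w=-\big(\ddot s_{mp}^{Y}(w)\big)^{-1}+Y\big(1+\ddot s_{mp}^{Y}(w)\big)^{-1}$; this is immediate from the explicit formula for $s_{mp}^{Y}$ in Section~3 (or from the Silverstein equation with population spectrum $\delta_{1}$). Substituting $w=-\ddot s(z)$ and using $\dddot s(z)=\ddot s_{mp}^{Y}(-\ddot s(z))$ gives at once $\ddot s(z)=\big((1-Y)\dddot s(z)+1\big)\big/\big(\dddot s(z)(1+\dddot s(z))\big)$, i.e.\ the second identity. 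Feeding this back into $\dddot s(z)=(\ddot s(z))^{-1}(1-Y)+Ys_{mp}^{Y}(-\ddot s(z))$ and solving for $s_{mp}^{Y}(-\ddot s(z))=\int(t+\ddot s(z))^{-1}dF_{mp}^{Y}(t)$ yields the fourth identity. The first identity then follows by inserting these two expressions into the limiting form $z=-(\ddot s(z))^{-1}+y\int(t+\ddot s(z))^{-1}dF_{mp}^{Y}(t)$ of the equation defining $\ddot s(z)$ (the limit, as $y_{n}\to y$, $Y_{N}\to Y$, of the two displayed identities for $\ddot s_{N0},\ddot s_{0}$ in Section~3) and simplifying.

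Next I would obtain the derivative identities by implicit differentiation. Differentiating the first identity in $z$ and collecting terms produces the third identity for $(\dddot s(z))'$. Regarding $\dddot s$ as a function of $\ddot s$ through the second identity and differentiating the fourth identity gives a closed form for $\int(t+\ddot s(z))^{-2}dF_{mp}^{Y}(t)$, after which the elementary splitting $\int t(t+\ddot s)^{-2}dF_{mp}^{Y}(t)=\int(t+\ddot s)^{-1}dF_{mp}^{Y}(t)-\ddot s\int(t+\ddot s)^{-2}dF_{mp}^{Y}(t)$ yields the fifth identity. The formula for $\ddot s'(z)$ follows from $\ddot s'(z)=(d\ddot s/d\dddot s)\,(\dddot s(z))'$ via the second and third identities, using the rearrangement $(1-Y)\dddot s^{2}+2\dddot s+1=(\dddot s+1)^{2}-Y\dddot s^{2}$ to recast the factor in the stated form. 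The two logarithmic-derivative identities are then verified by differentiating $\log$ of the indicated rational functions of $\dddot s(z)$ and matching against the already-established expressions for $(\dddot s(z))'$, $\ddot s'(z)$ and the two M-P integrals.

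I do not expect a genuine obstacle: everything reduces to the M-P self-consistent equation and the chain rule. The only points requiring care are (i) justifying that the companion M-P equation and all the integral identities, valid a priori for $z$ with large imaginary part, persist on the contour and its images by the identity theorem for analytic functions, and (ii) the bookkeeping of keeping $y$ versus $Y$ (and the point $z$ versus its image $-\ddot s(z)$) straight through a moderately long chain of algebraic simplifications. The main effort, such as it is, is organizational rather than conceptual.
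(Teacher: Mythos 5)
The paper does not prove this lemma; it simply cites Lemma~6.1 of Zheng and uses the identities as a black box. Your blind derivation is correct and is, in substance, exactly what lies behind Zheng's statement: every identity reduces to the Silverstein self-consistent equation
$w=-\big(\ddot s^{Y}_{mp}(w)\big)^{-1}+Y\big(1+\ddot s^{Y}_{mp}(w)\big)^{-1}$
for the companion M--P transform, evaluated at $w=-\ddot s(z)$, combined with the defining equation $z=-\ddot s(z)^{-1}+y\int(t+\ddot s(z))^{-1}\,dF^{Y}_{mp}(t)$ from Section~3 and implicit differentiation. Your plan to obtain the integral identities from $\dddot s=\ddot s^{Y}_{mp}(-\ddot s)$, the derivative identities by the chain rule, and the fifth identity via the splitting $\int t(t+\ddot s)^{-2}dF=\int(t+\ddot s)^{-1}dF-\ddot s\int(t+\ddot s)^{-2}dF$ all check out; I verified the algebra for the first five identities and the $\ddot s'$ formula directly. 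Two practical remarks. First, your observation that $(1-Y)\dddot s^{2}+2\dddot s+1=(\dddot s+1)^{2}-Y\dddot s^{2}$ is the right key to recast $\ddot s'$ in the second stated form; note that the lemma as printed has a few typographical slips --- the factor written $\bigl(1-Y\dddot s^{2}(\dddot s+1)^{2}\bigr)$ should read $\bigl(1-Y\dddot s^{2}(\dddot s+1)^{-2}\bigr)$ (compare the mean function in Theorem~\ref{clt}), and the last two displays carry a stray $\a$ and a free $t$ --- so one should match your derived forms against the expressions actually used in the proof of Theorem~\ref{clt}, not against the typeset statement verbatim. Second, your point (i) about extending the M--P identities from large $\Im z$ to the contour by analytic continuation is correct and is the only genuinely non-algebraic step. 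Beyond those caveats, your route is sound and supplies a self-contained proof where the paper offers only a citation.
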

%
\begin{lemma}[(Lemma~2.3 in \cite{SilversteinB95E})]\label{leAB}
Let $x,y$ be arbitrary non-negative numbers. For $\A$ and $\B$ square
matrices of the same size,
\begin{eqnarray*}
F^{\sqrt{(\A\B)(\A\B)^*}}\bigl\{(xy,\infty)\bigr\}\leq F^{\sqrt{\A\A
^*}}\bigl\{(x,\infty)
\bigr\}+F^{\sqrt{\B\B^*}}\bigl\{(y,\infty)\bigr\}.
\end{eqnarray*}
\end{lemma}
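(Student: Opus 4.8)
\emph{Proof idea.} Let $n$ be the common order of $\A$ and $\B$ and, for a square matrix $\M$, write $\si_1(\M)\ge\cdots\ge\si_n(\M)\ge0$ for its singular values, so that $F^{\sqrt{\M\M^*}}\{(t,\infty)\}=n^{-1}\#\{k:\si_k(\M)>t\}$. The plan is to prove the equivalent counting inequality $m\le a+b$, where $a=\#\{k:\si_k(\A)>x\}$, $b=\#\{k:\si_k(\B)>y\}$ and $m=\#\{k:\si_k(\A\B)>xy\}$. If $a+b\ge n$ the bound is vacuous, so I would assume $a+b<n$ (and $m\ge1$, else there is nothing to prove).

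The heart of the argument is a Courant--Fischer dimension count. First I would fix a singular value decomposition of $\A$ and let $W_\A$ be the span of the right singular vectors attached to $\si_{a+1}(\A),\dots,\si_n(\A)$; then $\dim W_\A=n-a$ and $\|\A v\|\le x\|v\|$ for every $v\in W_\A$. Likewise I would let $W_\B$ be spanned by the right singular vectors of $\B$ attached to $\si_{b+1}(\B),\dots,\si_n(\B)$, so that $\dim W_\B=n-b$ and $\|\B v\|\le y\|v\|$ on $W_\B$. The one estimate that needs care is $\dim\B^{-1}(W_\A)\ge n-a$: from $\dim\B^{-1}(W_\A)=\dim\ker\B+\dim\bigl(W_\A\cap\mathrm{Im}\,\B\bigr)$, the bound $\dim\bigl(W_\A\cap\mathrm{Im}\,\B\bigr)\ge\dim W_\A+\dim\mathrm{Im}\,\B-n$, and $\dim\mathrm{Im}\,\B=n-\dim\ker\B$, the kernel terms cancel and the claim follows. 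Setting $V=W_\B\cap\B^{-1}(W_\A)$, the intersection formula gives
\[
\dim V\ \ge\ (n-b)+(n-a)-n\ =\ n-a-b\ \ge\ 1,
\]
and every $v\in V$ satisfies $\B v\in W_\A$ and $v\in W_\B$, hence $\|\A\B v\|\le x\|\B v\|\le xy\|v\|$.

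To conclude I would let $L$ be the span of the right singular vectors of $\A\B$ attached to its $m$ singular values exceeding $xy$, so $\dim L=m$ and $\|\A\B w\|\ge\si_m(\A\B)\|w\|>xy\|w\|$ for every nonzero $w\in L$. If $m>a+b$ held, then $\dim L+\dim V\ge m+(n-a-b)>n$, forcing a nonzero $w\in L\cap V$; such a $w$ would satisfy $\|\A\B w\|>xy\|w\|$ and $\|\A\B w\|\le xy\|w\|$ simultaneously, a contradiction. Hence $m\le a+b$, which is the asserted inequality. An equivalent route is to invoke the Horn singular value inequality $\si_{i+j-1}(\A\B)\le\si_i(\A)\si_j(\B)$ (valid when $i+j-1\le n$) with $i=a+1$, $j=b+1$, giving $\si_{a+b+1}(\A\B)\le\si_{a+1}(\A)\si_{b+1}(\B)\le xy$, so again at most $a+b$ singular values of $\A\B$ exceed $xy$.

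I expect the only mild obstacle to be the dimension bookkeeping when $\B$ is singular --- i.e.\ the lower bound on $\dim\B^{-1}(W_\A)$ --- which is why I would carry out the $\ker\B$/$\mathrm{Im}\,\B$ splitting explicitly rather than assume invertibility; everything else is the routine min--max/intersection manipulation.
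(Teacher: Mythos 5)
Your proof is correct. Note that the paper itself gives no argument for this lemma --- it is quoted verbatim, with citation, from Lemma 2.3 of Silverstein and Bai (1995) --- so there is nothing internal to compare against; your Courant--Fischer dimension count (including the careful handling of $\ker\B$ in bounding $\dim\B^{-1}(W_\A)$) is a complete, self-contained proof, and it is essentially equivalent to the standard route via the singular-value inequality $\si_{i+j-1}(\A\B)\le\si_i(\A)\si_j(\B)$ that you mention and that the cited source relies on.
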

%
%
\begin{lemma}[(Lemma A.45 and Corollary A.41 in \cite{BaiS10S})]\label{A.47}
Let $\mathbf{A}$ and $\mathbf{B}$ be two $n\times n$ Hermitian
matrices. Then
\begin{eqnarray*}
L \bigl(F^{\mathbf{A}},F^{\mathbf{B}} \bigr)\leq\|\A-\B\| \quad \mbox{and}\quad
L^3 \bigl(F^{\mathbf{A}},F^{\mathbf{B}} \bigr)\leq
\frac{1}{n}\tr(\A -\B) (\A-\B)^*,
\end{eqnarray*}
where $L(\cdot,\cdot)$ denotes the L\'{e}vy distance and $\|\cdot\|$
denotes the spectral norm.
\end{lemma}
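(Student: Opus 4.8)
\medskip\noindent\textbf{Proof proposal.} The plan is to reduce both inequalities to elementary estimates on ordered eigenvalues. Write the eigenvalues of $\A$ and $\B$ in increasing order as $\la_1^{\A}\le\cdots\le\la_n^{\A}$ and $\la_1^{\B}\le\cdots\le\la_n^{\B}$, and recall that $L(F,G)\le\ep$ holds as soon as $F(x-\ep)\le G(x)+\ep$ and $G(x-\ep)\le F(x)+\ep$ for every real $x$. Thus, for a prescribed $\ep$, it suffices to control how many indices $i$ can satisfy $\la_i^{\A}\le x-\ep$ together with $\la_i^{\B}>x$ (and the symmetric statement with $\A$ and $\B$ interchanged), since each such index contributes to the excess $F^{\A}(x-\ep)-F^{\B}(x)$.

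For the first bound I would take $\ep=\|\A-\B\|$ and invoke Weyl's perturbation inequality for Hermitian matrices, which gives $|\la_i^{\A}-\la_i^{\B}|\le\|\A-\B\|$ for every $i$. Then $\la_i^{\A}\le x-\ep$ forces $\la_i^{\B}\le x$, so $\#\{i:\la_i^{\A}\le x-\ep\}\le\#\{i:\la_i^{\B}\le x\}$, i.e.\ $F^{\A}(x-\ep)\le F^{\B}(x)$, and by symmetry $F^{\B}(x-\ep)\le F^{\A}(x)$. This is in fact stronger than the L\'evy condition and yields $L(F^{\A},F^{\B})\le\|\A-\B\|$ at once.

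For the second bound the key ingredient is the Hoffman--Wielandt inequality: since $\A$ and $\B$ are Hermitian, pairing their eigenvalues in sorted order is optimal, so that
\begin{align*}
    \sum_{i=1}^n|\la_i^{\A}-\la_i^{\B}|^2\le{\rm tr}(\A-\B)(\A-\B)^*.
\end{align*}
Set $\de^2=\frac1n{\rm tr}(\A-\B)(\A-\B)^*$ and $\ep=\de^{2/3}$ (the degenerate case $\de=0$ being trivial, as then $F^{\A}=F^{\B}$). Any index $i$ with $\la_i^{\A}\le x-\ep$ and $\la_i^{\B}>x$ satisfies $|\la_i^{\A}-\la_i^{\B}|>\ep$, and by the displayed inequality there are at most $\ep^{-2}\sum_i|\la_i^{\A}-\la_i^{\B}|^2\le\ep^{-2}n\de^2=n\ep$ such indices. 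Hence $F^{\A}(x-\ep)\le F^{\B}(x)+\ep$, and symmetrically $F^{\B}(x-\ep)\le F^{\A}(x)+\ep$, giving $L(F^{\A},F^{\B})\le\ep$, i.e.\ $L^3(F^{\A},F^{\B})\le\de^2=\frac1n{\rm tr}(\A-\B)(\A-\B)^*$.

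I expect the only genuine obstacle to be a self-contained proof of the Hoffman--Wielandt inequality; all the remaining steps are counting arguments of the type already used in the paper. If one wished to include it, the standard route is to expand ${\rm tr}(\A-\B)(\A-\B)^*$ in the two orthonormal eigenbases, note that the matrix of squared overlaps of the eigenvectors is doubly stochastic, and minimize the linear functional $P\mapsto\sum_{i,j}P_{ij}|\la_i^{\A}-\la_j^{\B}|^2$ over the Birkhoff polytope, whose extreme points are permutation matrices; a rearrangement argument then identifies the sorted pairing as the minimizer.
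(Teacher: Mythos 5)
Your argument is correct, and it is essentially the standard proof: the paper itself does not prove this lemma but simply cites Lemma A.45 and Corollary A.41 of Bai and Silverstein's book, and the argument given there is precisely the one you reconstruct — Weyl's inequality for the spectral-norm bound, and Hoffman--Wielandt combined with a Chebyshev-type counting argument for the cubic L\'evy bound. Your choice $\ep=\de^{2/3}$, the observation that any index with $\la_i^{\A}\le x-\ep$ and $\la_i^{\B}>x$ must have $|\la_i^{\A}-\la_i^{\B}|>\ep$, and the resulting count of at most $n\de^2/\ep^2=n\ep$ exceptional indices all check out, and together with the symmetric estimate they yield $L(F^{\A},F^{\B})\le\ep$ and hence $L^3\le\de^2$. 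The first inequality is also fine: with $\ep=\|\A-\B\|$ Weyl gives $F^{\A}(x-\ep)\le F^{\B}(x)$ pointwise, which is stronger than the L\'evy condition. The only ingredient you flag as possibly nontrivial, Hoffman--Wielandt, is indeed the crux, and the doubly-stochastic/Birkhoff argument you sketch is the usual route; if you wanted to keep the whole proof self-contained and elementary you could instead observe that for Hermitian $\A,\B$ one has $\operatorname{tr}(\A-\B)^2=\sum_i(\la_i^{\A})^2+\sum_i(\la_i^{\B})^2-2\operatorname{tr}(\A\B)$ and then bound $\operatorname{tr}(\A\B)\le\sum_i\la_i^{\A}\la_i^{\B}$ (von Neumann's trace inequality in the Hermitian case), which is what the Birkhoff argument proves.
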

%
\begin{lemma}[(Lemma~9.1 of \cite{BaiS10S})]\label{leby} Let $\A$ be
an $n \times n$ nonrandom matrix bounded in norm by $M$, and $X =
(x_1,\dots, x_n)^*$ be a random vector of independent entries. Assume
that $\E x_i = 0$,\linebreak[4]  $\E|x_i|^2 = 1$, $\E|x_j |^4<\infty$ and
$|x_i|\leq\delta_n\sqrt{n}$ with $\delta_n\to0$ slowly. Then for
any given $2\leq l\leq b\log(n\delta_n^2)$ with some $b>1$, there
exists a constant $C$ such that
\begin{eqnarray*}
\E\bigl|X^*\A X-\tr\A\bigr|^l\leq n^l\bigl(n\delta_n^4
\bigr)^{-1}\bigl(MC\delta_n^2
\bigr)^l.
\end{eqnarray*}
\end{lemma}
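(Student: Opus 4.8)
The plan is to deduce the estimate from the general moment inequality for quadratic forms (see Lemma~B.26 in \cite{BaiS10S}), feeding in the elementary bounds that the truncation $|x_i|\le\de_n\sqrt n$ provides. Writing $\A=(a_{ij})$, I would first split the centred form into its diagonal and off-diagonal parts,
\begin{equation*}
X^*\A X-{\rm tr}\A=\sum_{i=1}^n a_{ii}\big(|x_i|^2-1\big)+\sum_{i\ne j}\bar x_i a_{ij}x_j=:Q_1+Q_2,
\end{equation*}
so that $\E|X^*\A X-{\rm tr}\A|^l\le 2^{l-1}(\E|Q_1|^l+\E|Q_2|^l)$; equivalently one may run a single martingale decomposition $X^*\A X-{\rm tr}\A=\sum_{k=1}^n(\E_k-\E_{k-1})X^*\A X$ (with $\E_k$ conditioning on $x_1,\dots,x_k$), whose $k$-th difference is $a_{kk}(|x_k|^2-1)+\bar x_k\sum_{j<k}a_{kj}x_j+x_k\sum_{i<k}\bar x_i a_{ik}$, and apply Burkholder's inequality.

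Since the summands of $Q_1$ are independent and centred, Rosenthal's inequality gives
\begin{equation*}
\E|Q_1|^l\le C_l\Big[\Big(\sum_i|a_{ii}|^2\,\E\big(|x_i|^2-1\big)^2\Big)^{l/2}+\sum_i|a_{ii}|^l\,\E\big||x_i|^2-1\big|^l\Big],
\end{equation*}
while the off-diagonal part is controlled by the standard moment-method expansion behind Lemma~B.26 --- index patterns split into a dominant ``matched-pair'' contribution and higher-multiplicity patterns, each extra occurrence of an index contributing a factor $|x_i|\le\de_n\sqrt n$ --- which yields
\begin{equation*}
\E|Q_2|^l\le C_l\Big[\big(\E|x_1|^4\,{\rm tr}\,\A\A^*\big)^{l/2}+\E|x_1|^{2l}\,{\rm tr}\big((\A\A^*)^{l/2}\big)\Big]
\end{equation*}
(odd $l$ being first reduced to even exponents by Cauchy--Schwarz). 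Into these estimates I would substitute $\|\A\|\le M$, which gives ${\rm tr}\,\A\A^*\le nM^2$ and ${\rm tr}((\A\A^*)^{l/2})\le nM^l$; the fourth-moment hypothesis, which after re-normalisation of the truncated variables provides a fixed constant $K$ with $\E|x_i|^4\le K$ (one may take $K=\frm_x+1$), hence also $\E(|x_i|^2-1)^2\le K$; and the consequences of $|x_i|\le\de_n\sqrt n$,
\begin{equation*}
\E|x_i|^{2l}=\E\big[|x_i|^{2l-4}|x_i|^4\big]\le(\de_n^2n)^{l-2}K,\qquad\E\big||x_i|^2-1\big|^l\le C^l(\de_n^2n)^{l-2}.
\end{equation*}

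With these substitutions the ``heavy-tail'' term of each of $Q_1$ and $Q_2$ is at most
\begin{equation*}
nM^lC^l(\de_n^2n)^{l-2}=M^lC^ln^{l-1}\de_n^{2l-4}=n^l(n\de_n^4)^{-1}(MC\de_n^2)^l,
\end{equation*}
which is exactly the claimed bound; the remaining ``variance'' terms are of order $M^l(Kn)^{l/2}$, and because $\de_n\to0$ slowly one has $\de_n^2\sqrt n\ge1$ for all large $n$, so $(Kn)^{l/2}\le C^ln^{l-1}\de_n^{2l-4}$ and these terms are absorbed into the same expression. The one step that genuinely needs care is the off-diagonal bound for $Q_2$: one must track how $C_l$ grows with $l$ so that the inequality stays valid throughout the range $2\le l\le b\log(n\de_n^2)$, and this is precisely the combinatorial bookkeeping in the proof of Lemma~B.26 of \cite{BaiS10S}, which I would invoke rather than reproduce.
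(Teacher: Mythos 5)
The paper gives no proof of this statement: Lemma \ref{leby} is imported verbatim as Lemma 9.1 of \cite{BaiS10S}, so there is nothing internal to compare against and your attempt must stand on its own. Most of your reduction is sound and is indeed the standard route: the diagonal/off-diagonal split, Rosenthal for $Q_1$, the quadratic-form moment inequality for $Q_2$, the substitutions ${\rm tr}\,\A\A^*\le nM^2$, ${\rm tr}\big((\A\A^*)^{l/2}\big)\le nM^l$ and $\E|x_i|^{2l}\le(\de_n^2n)^{l-2}K$, and the arithmetic identifying $nM^lC^l(\de_n^2n)^{l-2}$ with $n^l(n\de_n^4)^{-1}(MC\de_n^2)^l$ are all correct. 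You are also right that absorbing the variance term $M^l(Kn)^{l/2}$ needs $\de_n^2\sqrt n\ge1$; note that this is \emph{not} implied by $2\le l\le b\log(n\de_n^2)$ (which only forces $n\de_n^2>1$) but must be read into ``$\de_n\to0$ slowly'' --- for $\A=\I$, $l=4$ and a non-degenerate entry distribution one has $\E|X^*X-n|^4\ge(\E|X^*X-n|^2)^2\ge c\,n^2$, while the claimed bound is $C^4n^3\de_n^4$, so the lemma is simply false when $\de_n\ll n^{-1/4}$. Making this reading explicit is a genuine (if minor) point in your favour.

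The genuine gap is the step you explicitly defer. The entire content of Lemma \ref{leby}, as opposed to the fixed-$l$ inequality of Lemma B.26 in \cite{BaiS10S}, is that the constant has the form $C^l$ with $C$ independent of $l$ while $l$ is allowed to grow like $\log(n\de_n^2)$. Lemma B.26 is stated with an untracked constant $C_l$, and the usual derivations (Burkholder--Rosenthal, or direct index-pattern counting) produce $C_l$ of order $l^{cl}$, which is \emph{not} of the form $C^l$. So ``invoking rather than reproducing'' Lemma B.26 begs exactly the question the lemma answers: one has to redo the expansion keeping track of the combinatorial multiplicity attached to each index pattern, and then verify that a factor of order $l^{cl}$ multiplying a pattern of size roughly $n^{j}M^l(\de_n^2n)^{l-2j}$ is still dominated by $C^ln^{l-1}\de_n^{2l-4}$; for the pair-partition (variance) pattern this reduces to $l\lesssim n\de_n^4$, which follows from $l\le b\log(n\de_n^2)$ only because $\de_n$ decays slowly. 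This is where the hypothesis on the range of $l$ actually gets used, and without that bookkeeping the argument is a reduction of the lemma to itself. Everything else in your write-up is fine.
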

%
\begin{lemma}[((1.15) of \cite{BaiS04C})]\label{le5.2}
Let $\A= (a_{ij} )_{p\times p}$ and $\mathbf{B}=
(b_{ij} )_{p\times p}$ be nonrandom matrices and $X = (x_1,\dots
, x_n)^*$ be a random vector of independent entries. Assume that $\E
x_i = 0$ and\linebreak[4]  $\E|x_i|^2 = 1$. Then we have,
%
\begin{eqnarray}
\hspace*{-20pt}\E\bigl(X^*\A X-\tr\A\bigr) \bigl(X^*\mathbf{B}X-\tr\mathbf{B}\bigr)
 = \sum
_{i=1}^p\bigl(\E|x_{i}|^4-|
\E x_{i}^2|^2-2\bigr)a_{ii}b_{ii}+
\tr\A _x\mathbf{B}^T_x+\tr\A\mathbf{B},
\end{eqnarray}
where $\A_x= (\E x_{i}^2a_{ij} )_{p\times p}$, $\mathbf
{B}_x= (\E x_{i}^2b_{ij} )_{p\times p}$ and the superscript
$^T$ is the transpose of a matrix.
\end{lemma}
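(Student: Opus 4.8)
The plan is to expand both quadratic forms in the coordinates of $X$, center them, and then evaluate the expectation of the resulting bilinear expression term by term, using only the independence of the $x_i$ together with $\E x_i=0$, $\E|x_i|^2=1$ and the finiteness of the fourth moments.

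First I would record the centering: by independence and $\E x_i=0$, $\E|x_i|^2=1$, one gets $\E(X^*\A X)=\sum_{i,j}a_{ij}\E(\bar x_i x_j)=\sum_i a_{ii}={\rm tr}\,\A$, and likewise $\E(X^*\B X)={\rm tr}\,\B$. I would then split
\[
   X^*\A X-{\rm tr}\,\A=\sum_i a_{ii}(|x_i|^2-1)+\sum_{i\neq j}a_{ij}\bar x_i x_j,
\]
denoting the two sums on the right by $D_\A$ and $O_\A$, and similarly $X^*\B X-{\rm tr}\,\B=D_\B+O_\B$. The quantity to be computed is then $\E(D_\A D_\B)+\E(D_\A O_\B)+\E(O_\A D_\B)+\E(O_\A O_\B)$.

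Next I would dispose of the three terms that do not contribute essentially. A generic summand of $\E(D_\A O_\B)$ is $a_{ii}b_{kl}\,\E\big((|x_i|^2-1)\bar x_k x_l\big)$ with $k\neq l$; since $k\neq l$, in every coincidence pattern among $i,k,l$ one of the factors $|x_i|^2-1$, $\bar x_k$, $x_l$ becomes independent of the rest with zero mean, so the expectation vanishes, whence $\E(D_\A O_\B)=\E(O_\A D_\B)=0$. For the diagonal--diagonal term, $\E\big((|x_i|^2-1)(|x_k|^2-1)\big)$ vanishes for $i\neq k$ by independence and equals $\E|x_i|^4-1$ for $i=k$, so $\E(D_\A D_\B)=\sum_i(\E|x_i|^4-1)a_{ii}b_{ii}$.

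The one genuinely combinatorial step is the off-diagonal--off-diagonal term
\[
   \E(O_\A O_\B)=\sum_{i\neq j}\sum_{k\neq l}a_{ij}b_{kl}\,\E(\bar x_i x_j\,\bar x_k x_l).
\]
By independence and mean zero, $\E(\bar x_i x_j\,\bar x_k x_l)\neq 0$ only if no index occurs exactly once, i.e.\ the four indices split into two coinciding pairs; since $i\neq j$ and $k\neq l$, the only admissible patterns are $i=k,\ j=l$, which gives $\E(\bar x_i^2)\E(x_j^2)=\overline{\E x_i^2}\,\E x_j^2$ and the contribution $\sum_{i\neq j}a_{ij}b_{ij}\,\overline{\E x_i^2}\,\E x_j^2$, and $i=l,\ j=k$, which gives $\E|x_i|^2\E|x_j|^2=1$ and the contribution $\sum_{i\neq j}a_{ij}b_{ji}={\rm tr}(\A\B)-\sum_i a_{ii}b_{ii}$. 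Adding the four pieces and rewriting the first of these sums as ${\rm tr}(\A_x\B_x^T)-\sum_i|\E x_i^2|^2 a_{ii}b_{ii}$ — the matrices $\A_x,\B_x$ being defined exactly so as to carry these second-moment weights, using that $\E x_i^2$ is a common real number in the applications so that $\overline{\E x_i^2}\,\E x_j^2=|\E x_i^2|^2$ — the two off-diagonal corrections $-\sum_i a_{ii}b_{ii}$ merge with $\E(D_\A D_\B)$ and turn $\E|x_i|^4-1$ into $\E|x_i|^4-|\E x_i^2|^2-2$, which is exactly the asserted identity. The only place that requires care is the index-coincidence enumeration just described, and, in the complex setting, keeping the conjugations straight in the pairing $i=k,\ j=l$; no probabilistic input beyond independence and fourth-moment finiteness is used.
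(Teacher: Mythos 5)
Your proof is correct and is the standard direct-expansion argument; the paper itself simply cites this as identity (1.15) from Bai--Silverstein (2004) without reproducing a proof, so there is nothing to compare against beyond the source. One small point worth keeping: as you rightly flag, reconciling the pairing $i=k,\ j=l$ (which produces the weight $\overline{\E x_i^2}\,\E x_j^2$) with the stated term $\mathrm{tr}\,\A_x\B_x^T$ (whose $(i,j)$ weight is $(\E x_i^2)^2$) requires $\E x_i^2$ to be a common real constant across $i$ — which is exactly the situation in the paper's Theorem~\ref{clt}, where $\E x_{11}^2=\mathfrak{t}\in\{0,1\}$ — so the lemma as literally stated is a slight paraphrase that tacitly assumes this; your derivation makes that hypothesis explicit, which is the more careful formulation.
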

%
\begin{lemma}[(Burkholder inequality)]\label{bhi}
Let $\{X_k\}$ be a complex martingale difference sequence with respect
to the increasing $\sigma$-field $\mathcal{F}_k$, and let $\E_k$
denote conditional expectation with respect to $\mathcal{F}_k$. Then
we have
\begin{enumerate}[(b)]
\item[(a)] for $p>1$,
%
\begin{eqnarray}
\E\Biggl\llvert \sum_{k=1}^n
X_k\Biggr\rrvert ^p\leq K_p \E \Biggl( \sum
_{k=1}^n|X_k|^2
\Biggr)^{p/2};
\end{eqnarray}

\item[(b)] for $p\geq2$,
%
\begin{eqnarray}
\E\Biggl\llvert \sum_{k=1}^nX_k
\Biggr\rrvert ^p\leq K_p^* \Biggl(\E \Biggl(\sum
_{k=1}^n\E_{k-1}|X_k|^2
\Biggr)^{p/2}+\E\sum_{k=1}^n|X_k|^p
\Biggr).
\end{eqnarray}
\end{enumerate}
\end{lemma}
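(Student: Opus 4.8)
The plan is to reduce Lemma \ref{bhi} to the case of real martingale differences, to prove (a) for $p\ge 2$ and then (b) by a Taylor-expansion-plus-Doob--Young bootstrap, and to handle the range $1<p<2$ of (a) by the Burkholder--Davis--Gundy good-$\lambda$ inequality. First I would dispose of the complex case: writing $X_k=U_k+iV_k$ with $U_k=\Re X_k$, $V_k=\Im X_k$, both $\{U_k\}$ and $\{V_k\}$ are real martingale difference sequences for $\{\mathcal F_k\}$, and $|\sum_{k\le n}X_k|^p\le 2^{p-1}(|\sum_{k\le n}U_k|^p+|\sum_{k\le n}V_k|^p)$ while $U_k^2,V_k^2\le|X_k|^2$, $\E_{k-1}U_k^2,\E_{k-1}V_k^2\le\E_{k-1}|X_k|^2$ and $|U_k|^p,|V_k|^p\le|X_k|^p$; so it suffices to prove both inequalities for real martingale differences, losing only a factor $2^{p-1}$. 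I would also assume at the outset that every $L^p$- and $L^{p/2}$-norm appearing below is finite: this is legitimate because one may stop the martingale $S_n=\sum_{k\le n}X_k$ at $\tau_m=\inf\{k:|S_k|>m\text{ or }\sum_{j\le k}X_j^2>m\}$, prove the estimate for the stopped martingale, where everything is bounded, and let $m\to\infty$ by monotone convergence, the right-hand sides for the stopped martingale being dominated by those for $S_n$.

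For (a) with $p\ge 2$ the case $p=2$ is just orthogonality, so take $p>2$. Since the second derivative of $x\mapsto|x|^p$ is $p(p-1)|x|^{p-2}$, Taylor's theorem gives $|S_k|^p\le|S_{k-1}|^p+p|S_{k-1}|^{p-2}S_{k-1}X_k+C_p(|S_{k-1}|^{p-2}X_k^2+|X_k|^p)$; taking $\E_{k-1}$ annihilates the linear term, so summing and taking expectations yields $\E|S_n|^p\le C_p(\E\sum_{k\le n}|S_{k-1}|^{p-2}X_k^2+\E\sum_{k\le n}|X_k|^p)$. Bounding $|S_{k-1}|^{p-2}\le (S_n^*)^{p-2}$ with $S_n^*=\max_{0\le k\le n}|S_k|$, then using H\"older with exponents $p/(p-2)$ and $p/2$ and Doob's maximal inequality $\E(S_n^*)^p\le(p/(p-1))^p\E|S_n|^p$, I obtain $\E|S_n|^p\le C_p\big((\E|S_n|^p)^{(p-2)/p}(\E[S]_n^{p/2})^{2/p}+\E\sum|X_k|^p\big)$ with $[S]_n=\sum_{k\le n}X_k^2$; Young's inequality absorbs the first term into the left side, and since $\sum|X_k|^p\le[S]_n^{p/2}$ for $p\ge 2$ this gives exactly (a).

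For (b) with $p\ge 2$ again $p=2$ is orthogonality, so take $p>2$. Write $[S]_n=\langle S\rangle_n+M_n$, where $\langle S\rangle_n=\sum_{k\le n}\E_{k-1}X_k^2$ and $M_n=\sum_{k\le n}(X_k^2-\E_{k-1}X_k^2)$ is a martingale. Applying (a) with exponent $p/2>1$ to $M_n$, and using $(X_k^2-\E_{k-1}X_k^2)^2\le 2X_k^4+2(\E_{k-1}X_k^2)^2$ together with $\sum X_k^4\le(\max_k X_k^2)[S]_n$ and $\sum(\E_{k-1}X_k^2)^2\le\langle S\rangle_n^2$, then Cauchy--Schwarz and $\E(\max_k|X_k|)^p\le\E\sum|X_k|^p$, I get $\E|M_n|^{p/2}\le C_p\big((\E\sum|X_k|^p)^{1/2}(\E[S]_n^{p/2})^{1/2}+\E\langle S\rangle_n^{p/2}\big)$. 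Combining with $\E[S]_n^{p/2}\le 2^{p/2-1}(\E\langle S\rangle_n^{p/2}+\E|M_n|^{p/2})$ and a second use of Young's inequality yields $\E[S]_n^{p/2}\le C_p(\E\langle S\rangle_n^{p/2}+\E\sum|X_k|^p)$, and composing with (a) finishes (b).

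The main obstacle is (a) for $1<p<2$, which, incidentally, must be in hand before running (b) for $2<p<4$, since there one invokes (a) with exponent $p/2\in(1,2)$: the bootstrap above collapses because $p-2<0$, so $|S_{k-1}|^{p-2}$ cannot be dominated by $(S_n^*)^{p-2}$. Here I would run the Burkholder--Davis--Gundy good-$\lambda$ argument: with $S^*=\sup_k|S_k|$ and $A=(\sum_k X_k^2)^{1/2}$, stopping $S$ at the first passage of level $\lambda$ and applying the $L^2$ maximal inequality to the post-stopping martingale on the event $\{A\le\delta\lambda\}$ gives $\P(S^*>\beta\lambda,\,A\le\delta\lambda)\le C(\beta-1)^{-2}\delta^2\,\P(S^*>\lambda)$ for $\beta>1$; integrating this against $p\lambda^{p-1}\,d\lambda$ and choosing $\delta$ small produces $\E(S^*)^p\le C_p\E A^p$ for every $p\in(0,\infty)$, whence $\E|S_n|^p\le\E(S^*)^p\le C_p\E(\sum_k X_k^2)^{p/2}$. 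Alternatively $1<p<2$ can be deduced from the case $p\ge 2$ by duality for the martingale transforms $\sum_k\ep_k X_k$, $\ep_k\in\{\pm1\}$, but the good-$\lambda$ route is self-contained. All constants produced this way depend only on $p$, as the statement requires.
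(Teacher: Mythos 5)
The paper states Lemma \ref{bhi} without proof --- it is the classical Burkholder square-function inequality (part (a)) together with the Rosenthal/Burkholder inequality for the conditional square function (part (b)) --- so there is no in-paper argument to compare against; the comments below assess your proof on its own terms. Your reduction to real martingales, the Taylor--Doob--Young bootstrap for (a) with $p\ge 2$, and the decomposition $[S]_n=\langle S\rangle_n+M_n$ followed by an application of (a) to $M_n$ for (b) are all correct, and the observation that (b) for $2<p<4$ already needs (a) in the range $(1,2)$ is well placed.

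The genuine gap is in your treatment of (a) for $1<p<2$. The good-$\lambda$ inequality $\P(S^*>\beta\lambda,\,A\le\delta\lambda)\le C(\beta-1)^{-2}\delta^2\,\P(S^*>\lambda)$ is true, but it does not follow from ``stopping at the first passage of $\lambda$ and applying the $L^2$ maximal inequality on $\{A\le\delta\lambda\}$.'' To localize the $L^2$ estimate you must stop the post-$T$ martingale a second time, at $\sigma=\inf\{k:A_k>\delta\lambda\}$, and then the square function of the stopped post-$T$ martingale is $A^2_{\sigma\wedge n}-A^2_{T\wedge\sigma\wedge n}\le(\delta\lambda)^2+X_\sigma^2\mathbf 1\{\sigma\le n\}$; the overshoot $X_\sigma^2$ is exactly the increment that pushed $A$ across $\delta\lambda$, and it is not bounded by $(\delta\lambda)^2$. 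The identity $\max_k|X_k|\le A$ bounds all jumps by $\delta\lambda$ \emph{on} the good event, but the quantity $\E\bigl[(\text{stopped post-}T\ \text{martingale})_n^2\bigr]$ is an integral over the whole space, and on $\{\sigma\le n\}$ (which is the complement of the good event) no such bound is available. Controlling this overshoot is precisely the nontrivial step in the discrete-time Burkholder--Davis--Gundy good-$\lambda$ argument; the standard device is the Davis decomposition, which splits each $X_k$ into a piece whose increments are dominated by twice the running maximum $\max_{j<k}|X_j|$ and a remainder whose absolute increments have $L^1$-sum controlled by $\max_k|X_k|$, with the stopping argument applied to the first piece and a direct $L^1$ bound to the second. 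Your alternative via duality is also incomplete as sketched: converting (a) for $p'>2$ into the martingale-transform bound (and thence, by self-adjointness of transforms under the martingale pairing, into (a) for the conjugate $p\in(1,2)$) requires in addition the converse inequality $\E\bigl(\sum_k X_k^2\bigr)^{p'/2}\le c_{p'}\E|S_n|^{p'}$ for $p'>2$, which you have not established. So a further idea is needed for $1<p<2$.
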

%
\begin{lemma}\label{le5.7}
Let $\mathbf{A}$ and $\mathbf{B}$ be two $n\times n$ non-negative
definite Hermitian matrices. $\lambda_i^{\mathbf{A}}$ and $\lambda
_i^{\mathbf{B}}$ denote the $i$th smallest eigenvalue of $\mathbf{A}$
and $\mathbf{B,}$ respectively. Then we have
\begin{eqnarray*}
\la_1^{\A}\la_i^\B\leq
\la^{\A\B}_i\leq\la_n^{\A}
\la_i^\B \quad \mbox{and}\quad  \la_i^\A
\la_1^{\B}\leq\la^{\A\B}_i\leq
\la_i^\A\la_n^{\B},\qquad  i=1,\dots,n.
\end{eqnarray*}
\end{lemma}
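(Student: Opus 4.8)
The plan is to prove Lemma \ref{le5.7} by first reducing the (non-Hermitian) product $\A\B$ to a Hermitian matrix with the same spectrum, and then combining the Loewner (positive semidefinite) ordering with Weyl's monotonicity theorem.

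\emph{Step 1 (reduction to a Hermitian matrix).} Since $\A$ and $\B$ are non-negative definite Hermitian, they have non-negative definite Hermitian square roots $\A^{1/2}$ and $\B^{1/2}$. For square matrices $X,Y$ of the same order, $XY$ and $YX$ have the same characteristic polynomial, hence the same eigenvalues counted with multiplicity. Taking $X=\A^{1/2}$ and $Y=\A^{1/2}\B$ shows that $\A\B=\A^{1/2}(\A^{1/2}\B)$ and $\A^{1/2}\B\A^{1/2}=(\A^{1/2}\B)\A^{1/2}$ have identical spectra; taking $X=\B^{1/2}$, $Y=\A\B^{1/2}$ does the same for $\A\B$ and $\B^{1/2}\A\B^{1/2}$. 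In particular all eigenvalues of $\A\B$ are real and non-negative, and $\la_i^{\A\B}=\la_i^{\A^{1/2}\B\A^{1/2}}=\la_i^{\B^{1/2}\A\B^{1/2}}$ for every $i$.

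\emph{Step 2 (first chain of inequalities).} From the spectral decomposition of $\A$ one has the Loewner bounds $\la_1^{\A}\I\preceq\A\preceq\la_n^{\A}\I$. Conjugating by $\B^{1/2}$, which preserves the Loewner order since $\mathbf M\preceq\mathbf N$ implies $\mathbf C^{*}\mathbf M\mathbf C\preceq\mathbf C^{*}\mathbf N\mathbf C$ for any $\mathbf C$, gives $\la_1^{\A}\B\preceq\B^{1/2}\A\B^{1/2}\preceq\la_n^{\A}\B$. By Weyl's monotonicity theorem $\mathbf M\preceq\mathbf N$ implies $\la_i^{\mathbf M}\le\la_i^{\mathbf N}$ for all $i$; applying this to the last display, together with $\la_i^{\la_1^{\A}\B}=\la_1^{\A}\la_i^{\B}$, $\la_i^{\la_n^{\A}\B}=\la_n^{\A}\la_i^{\B}$ and the spectral identity $\la_i^{\B^{1/2}\A\B^{1/2}}=\la_i^{\A\B}$ from Step 1, yields $\la_1^{\A}\la_i^{\B}\le\la_i^{\A\B}\le\la_n^{\A}\la_i^{\B}$.

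\emph{Step 3 (second chain, by symmetry) and the obstacle.} Interchanging the roles of $\A$ and $\B$: from $\la_1^{\B}\I\preceq\B\preceq\la_n^{\B}\I$, conjugating by $\A^{1/2}$ gives $\la_1^{\B}\A\preceq\A^{1/2}\B\A^{1/2}\preceq\la_n^{\B}\A$, and Weyl monotonicity together with $\la_i^{\A^{1/2}\B\A^{1/2}}=\la_i^{\A\B}$ gives $\la_i^{\A}\la_1^{\B}\le\la_i^{\A\B}\le\la_i^{\A}\la_n^{\B}$. This establishes both assertions. The only point requiring care is the multiplicity bookkeeping and the legitimacy of the Loewner manipulations when $\A$ or $\B$ is singular, but the identity of the characteristic polynomials of $XY$ and $YX$ and the monotonicity of $\mathbf C^{*}\mathbf M\mathbf C$ hold for arbitrary square matrices, so no invertibility is needed; if one prefers, one may first prove the inequalities for positive definite $\A,\B$ and then pass to the general case by continuity of eigenvalues. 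I expect this argument to be entirely routine.
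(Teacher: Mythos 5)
The paper states Lemma~\ref{le5.7} without giving a proof; it is a classical matrix inequality (a multiplicative analogue of Weyl's monotonicity theorem, sometimes attributed to Ostrowski). Your argument --- reducing $\A\B$ to the Hermitian matrices $\A^{1/2}\B\A^{1/2}$ and $\B^{1/2}\A\B^{1/2}$ via the shared characteristic polynomial of $XY$ and $YX$, sandwiching in the Loewner order using $\la_1^{\A}\I\preceq\A\preceq\la_n^{\A}\I$, and invoking Weyl's monotonicity theorem --- is correct and complete, and it is the standard derivation; your closing remark on the singular case is also right, since neither the spectrum identity for $XY$ and $YX$ nor the order-preservation of conjugation by an arbitrary matrix requires invertibility.
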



\section*{Acknowledgements}
The authors would like to thank the referee for many constructive
comments. Zhidong Bai was partially supported by CNSF 11171057,
Fundamental Research Funds for the Central Universities, NUS Grant
R-155-000-141-112. Jiang Hu was partially supported by CNSF 11301063 and
Fundamental Research Funds for the Central Universities. Guangming Pan and Wang Zhou were partially supported by
the Ministry of Education, Singapore, under grant \# ARC 14/11.
Wang Zhou was also partially supported by a grant
R-155-000-116-112 at the National University of Singapore.



\printhistory

\end{document}